\newcommand{\svdots}{%
  \vbox{\fontsize{\sf@size}{\sf@size pt}\linespread{0.3}\selectfont
    \kern0.2\baselineskip
    \hbox{.}\hbox{.}\hbox{.}%
    \kern0.1\baselineskip
  }%
}
\theoremstyle{plain}
\newtheorem{main theorem}{Main Theorem}
\newtheorem{theorem}{Theorem}[section]
\newtheorem{lemma}[theorem]{Lemma}
\newtheorem{corollary}[theorem]{Corollary}
\newtheorem{proposition}[theorem]{Proposition}
\newtheorem{claim}[theorem]{Claim}
\newtheorem{lemma-definition}[theorem]{Lemma-Definition}
\theoremstyle{definition}
\newtheorem{remark}[theorem]{Remark}
\newtheorem{problem}[theorem]{Problem}
\numberwithin{equation}{section}
\newcommand{\diam}{\mathrm{Diam}}
\newcommand{\supp}{\mathrm{supp}}
\newcommand{\lip}{\mathrm{Lip}}
\newcommand{\fix}{\mathrm{Fix}}
\newcommand{\gr}{\mathrm{Gr}}
\title[Multidimensional Lipschitz Bebutov--Kakutani theorem]
{A Lipschitz Refinement of the Multidimensional Bebutov--Kakutani Dynamical Embedding Theorem}
\author{Yonatan Gutman, Qiang Huo, Masaki Tsukamoto}
\address[Yonatan Gutman]
{Institute of Mathematics, Polish Academy of Sciences, ul. \'{S}niadeckich 8, Warsaw, 00-656, Poland.}
\email{gutman@impan.pl}
\address[Qiang Huo]
{School of Mathematical Sciences, University of Science and Technology of China, Hefei 230026, China.}
\email{qianghuo@ustc.edu.cn}
\address[Masaki Tsukamoto]
{Department of Mathematics, Kyoto University, Kyoto 606-8502, Japan.}
\email{tsukamoto@math.kyoto-u.ac.jp}
\begin{document}

\subjclass[2020]{37B02, 37B05}

\keywords{$\mathbb{R}^n$-action, dynamical embedding, multidimensional Bebutov--Kakutani theorem, compact universal multidimensional flow, Lipschitz functions}

\thanks{Y.G. was partially supported by the National Science Centre (Poland) grant 2020/39/B/ST1/02329. Q.H. was partially supported by China Postdoctoral Science Foundation 2025M773065; National Key Research and Development Program of China 2024YFA1013600; Fundamental Research Funds for the Central Universities. 
M.T. was supported by JSPS KAKENHI JP25K06974.}

\maketitle

\begin{abstract}
We prove that a continuous action of $\mathbb{R}^n$ on a compact metrizable space equivariantly embeds into 
the shift action on the space of one-Lipschitz functions from $\mathbb{R}^n$ to $[0,1]$ if and only if
the set of fixed points topologically embeds in $[0,1]$.
This is a Lipschitz refinement of classical dynamical embedding theorems of Bebutov, Kakutani, Jaworski and Chen.
\end{abstract}

\section{Introduction}  \label{section: Introduction}

\subsection{The Bebutov--Kakutani theorem and its multidimensional generalization} \label{subsection: Bebutov--Kakutani theorem}

The purpose of this paper is to develop a Lipschitz refinement of the multidimensional Bebutov--Kakutani dynamical embedding theorem.
We first review the classical theory.
We will present our main result in \S \ref{subsection: main result} below.

Throughout this paper we assume that $\mathbb{R}^n$ is endowed with the Euclidean topology and the standard additive group structure.
Let $I = [0,1] = \{t\in \mathbb{R}\mid 0\leq t \leq 1\}$ be the unit interval. 
We define $C(\mathbb{R}, I)$ as the space of continuous maps $\varphi\colon \mathbb{R}\to I$ endowed with the compact-open topology.
The group $\mathbb{R}$ naturally acts on it by the translation: 
\[  \mathbb{R}\times C(\mathbb{R}, I) \to C(\mathbb{R}, I), \quad  \left(a, \varphi(t)\right) \mapsto  \varphi(t+a). \]
The set of fixed points of this action consists of constant maps and is homeomorphic to the unit interval $I$.

Let $X$ be a compact metrizable space and $T\colon \mathbb{R}\times X\to X$ a continuous action of $\mathbb{R}$ on it.
Denote $\fix (X, T) = \{x\in X\mid T^t x = x \, (\forall t\in \mathbb{R})\}$.
We say that \textbf{$(X, T)$ equivariantly embeds in $C(\mathbb{R}, I)$} if there exists a topological embedding 
$f\colon X\to C(\mathbb{R}, I)$ that commutes with the $\mathbb{R}$-actions.

Bebutov \cite{Bebutov} and Kakutani \cite{Kakutani} found that $C(\mathbb{R}, I)$ is a so-called \textit{universal embedding space}. 

\begin{theorem}[Bebutov 1940, Kakutani 1968] \label{theorem: Bebutov--Kakutani}
Let $T\colon \mathbb{R}\times X\to X$ be a continuous action of $\mathbb{R}$ on a compact metrizable space $X$.
It equivariantly embeds in $C(\mathbb{R},I)$ if and only if $\fix(X, T)$ topologically embeds in the unit interval $I$.
\end{theorem}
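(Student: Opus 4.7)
My plan is to follow the classical dichotomy. The \emph{only if} direction is immediate: if $f\colon X \to C(\mathbb{R}, I)$ is an equivariant topological embedding, then it must send $\fix(X,T)$ into the set of fixed points of $C(\mathbb{R},I)$, which consists precisely of the constant functions and is canonically homeomorphic to $I$. Restricting $f$ to $\fix(X,T)$ yields the required embedding into $I$.

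For the \emph{if} direction, fix a compatible metric $d$ on $X$ and suppose $\iota\colon \fix(X,T) \hookrightarrow I$ is a topological embedding. Since $\fix(X,T)$ is closed in the compact metrizable space $X$, the Tietze extension theorem produces a continuous $\varphi_0\colon X \to I$ extending $\iota$. For any continuous $\varphi\colon X \to I$ define
\[ F_\varphi\colon X \to C(\mathbb{R}, I), \qquad F_\varphi(x)(t) = \varphi(T^t x). \]
This map is automatically equivariant and continuous, and since $X$ is compact and $C(\mathbb{R}, I)$ is Hausdorff, it will be a topological embedding as soon as it is injective. The task therefore reduces to finding $\varphi$ extending $\iota$ such that for every $x \neq y$ in $X$ some $t \in \mathbb{R}$ satisfies $\varphi(T^t x) \neq \varphi(T^t y)$; on pairs lying inside $\fix(X,T)$ this is already guaranteed at $t=0$ by injectivity of $\iota$.

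I would then arrange injectivity via a Baire category argument inside the complete metric space
\[ \mathcal{F} = \bigl\{\varphi \in C(X,I) : \varphi|_{\fix(X,T)} = \iota\bigr\} \]
equipped with the supremum distance. For each $\varepsilon > 0$ set
\[ U_\varepsilon = \bigl\{\varphi \in \mathcal{F} : F_\varphi(x) \neq F_\varphi(y) \text{ whenever } d(x,y) \geq \varepsilon\bigr\}. \]
Openness of $U_\varepsilon$ follows from compactness of $\{(x,y) : d(x,y) \geq \varepsilon\}$ combined with continuity of evaluation on any compact time window. If each $U_\varepsilon$ is dense, then $\bigcap_{n \geq 1} U_{1/n}$ is a dense $G_\delta$ in $\mathcal{F}$, and any member of it provides an injective --- hence embedding --- $F_\varphi$.

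The main obstacle is density of $U_\varepsilon$. Given $\varphi_0 \in \mathcal{F}$ and $\delta > 0$, I need a $\delta$-close $\varphi \in \mathcal{F}$ whose orbit functions $t \mapsto \varphi(T^t x)$ separate every pair at distance $\geq \varepsilon$. The key tool is a local flow-box (cross-section) structure for the $\mathbb{R}$-action away from $\fix(X,T)$: around any non-fixed point, the orbit admits a small transversal on which $\varphi$ can be freely perturbed, producing prescribed variations of $t \mapsto \varphi(T^t x)$ along the trajectory. Covering the compact set $\{x : d(x, \fix(X,T)) \geq \varepsilon/4\}$ by finitely many such boxes and iteratively correcting $\varphi_0$ inside them with disjointly supported perturbations allows one to separate every pair that stays bounded away from the fixed-point set. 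Pairs $(x,y)$ with $d(x,y) \geq \varepsilon$ but at least one point close to $\fix(X,T)$ form the genuinely delicate case: here I would exploit that a non-fixed orbit cannot remain arbitrarily close to $\fix(X,T)$ while staying uniformly far from every individual fixed point, and combine this with the prescribed injectivity of $\iota$ to separate the two orbits. The interaction between the constraint $\varphi|_{\fix(X,T)} = \iota$ and the behavior of orbits spiralling toward $\fix(X,T)$ is, in my view, the main technical hurdle; it is exactly the point at which Kakutani's refinement of Bebutov's original 1940 construction is required.
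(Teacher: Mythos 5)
The paper does not give its own proof of this statement; Theorem~\ref{theorem: Bebutov--Kakutani} is cited from Bebutov and Kakutani, and \S\ref{subsection: Bebutov--Kakutani theorem} only sketches the reduction of the multidimensional Theorem~\ref{theorem: multidimensional Bebutov--Kakutani} to Kakutani's one-dimensional lemma. So there is no in-paper proof to compare against, but your sketch can be assessed on its merits.

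Your Baire-category scaffolding is sound: $\mathcal{F}$ is a complete metric space, each $U_\varepsilon$ is open by a routine compactness argument, any $\varphi\in\bigcap_{n\geq 1} U_{1/n}$ yields an injective and hence (compact-to-Hausdorff) embedding $F_\varphi$, and the ``only if'' direction is correct. The real gap is exactly where you locate it --- density of $U_\varepsilon$ --- but what you write there does not close it. The phrase ``iteratively correcting $\varphi_0$ inside them with disjointly supported perturbations'' skips the central difficulty: supports that are disjoint as subsets of $X$ are \emph{not} disjoint as perturbations of the functions $t\mapsto\varphi(T^t x)$, since a single orbit typically threads through many of your flow boxes, so a later local correction can undo a separation achieved by an earlier one. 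Establishing that a single flow-box perturbation simultaneously separates a neighborhood of a non-fixed point from a neighborhood of any other point, with effects that persist under the remaining corrections, is precisely the content of Kakutani's Lemma~2 (the very lemma the paper reuses when sketching Theorem~\ref{theorem: multidimensional Bebutov--Kakutani}); your argument presupposes this behavior rather than proving it. Similarly, the observation that an orbit accumulating on $\fix(X,T)$ must accumulate on some individual fixed point is true by compactness, but it does not by itself deliver separation: if the orbits of $x$ and $y$ both accumulate on the \emph{same} fixed point $p$, or on different fixed points but at unsynchronized times, injectivity of $\iota$ gives you nothing, and you are back to needing a perturbation argument for which no details are offered. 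Until a version of Kakutani's local-section lemma is actually supplied, density of $U_\varepsilon$ --- and with it the ``if'' direction --- remains unproved.
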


The “only if” part of the statement is trivial because the set of fixed points of $C(\mathbb{R}, I)$ is homeomorphic to $I$.
The “if” part is highly nontrivial and surprising.

We are interested in the generalization of Theorem \ref{theorem: Bebutov--Kakutani} to $\mathbb{R}^n$-actions, 
which was accomplished by Jaworski \cite{Jaworski} and Chen \cite{Chen75a, Chen75b}.
Indeed they obtained a generalization to much more general topological groups, including all connected locally compact groups.
However here we concentrate on the case of $\mathbb{R}^n$-actions.

Let $C(\mathbb{R}^n, I)$ be the space of continuous maps $\varphi\colon \mathbb{R}^n\to I$ equipped with the compact 
open topology. The group $\mathbb{R}^n$ continuously acts on it by the translation.
Let $T\colon \mathbb{R}^n\times X\to X$ be a continuous action of $\mathbb{R}^n$ on a compact metrizable space $X$.
We set $\fix(X, T) = \{x\in X\mid T^t x  = x \, (\forall t\in \mathbb{R}^n)\}$.
We say that $(X, T)$ equivariantly embeds in $C(\mathbb{R}^n, I)$ if there exists a topological embedding 
$f\colon X\to C(\mathbb{R}^n, I)$ that commutes with the $\mathbb{R}^n$-actions. 

The following theorem is a special case of the results of Jaworski \cite{Jaworski} and Chen \cite{Chen75a, Chen75b}.

\begin{theorem}[Jaworski 1974, Chen 1975]  \label{theorem: multidimensional Bebutov--Kakutani}
Let $T\colon \mathbb{R}^n\times X\to X$ be a continuous action of $\mathbb{R}^n$ on a compact metrizable space $X$.
It equivariantly embeds in $C(\mathbb{R}^n, I)$ if and only if $\fix(X, T)$ topologically embeds in the unit interval $I$.
\end{theorem}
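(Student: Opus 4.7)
The ``only if'' direction is immediate: the fixed points of the translation action on $C(\mathbb{R}^n,I)$ are exactly the constant maps, and they form a subset canonically homeomorphic to $I$, so any equivariant embedding of $X$ restricts to a topological embedding of $\fix(X,T)$ in $I$.

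For the ``if'' direction, given a topological embedding $\iota\colon\fix(X,T)\hookrightarrow I$, the plan is to construct a continuous extension $\varphi\colon X\to I$ of $\iota$ whose associated orbit map
\[
F_\varphi\colon X\to C(\mathbb{R}^n,I),\qquad F_\varphi(x)(t)=\varphi(T^t x),
\]
is injective. By construction $F_\varphi$ is continuous, $\mathbb{R}^n$-equivariant, and sends each $x\in\fix(X,T)$ to the constant function equal to $\iota(x)$; combined with compactness of $X$ and the Hausdorffness of $C(\mathbb{R}^n,I)$, injectivity of $F_\varphi$ is enough to make it a topological embedding. Tietze--Urysohn guarantees that the space $\mathcal{F}=\{\varphi\in C(X,I):\varphi|_{\fix(X,T)}=\iota\}$ is nonempty, and under the uniform metric it is a complete metric space, so the problem reduces to finding some $\varphi\in\mathcal{F}$ with $F_\varphi$ injective.

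My approach is a Baire category argument in $\mathcal{F}$. Fix a countable open basis $\{U_k\}$ of $X$ and, for each ordered pair $(i,j)$ with $\overline{U_i}\cap\overline{U_j}=\emptyset$, set
\[
\mathcal{G}_{i,j}=\bigl\{\varphi\in\mathcal{F}:F_\varphi(\overline{U_i})\cap F_\varphi(\overline{U_j})=\emptyset\bigr\}.
\]
Compactness of the closures and continuity of the assignment $\varphi\mapsto F_\varphi$ show that each $\mathcal{G}_{i,j}$ is open in $\mathcal{F}$, and every $\varphi\in\bigcap_{i,j}\mathcal{G}_{i,j}$ produces an injective $F_\varphi$, because distinct points of $X$ can be separated by basic open sets with disjoint closures. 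By Baire's theorem it therefore suffices to prove each $\mathcal{G}_{i,j}$ is dense in $\mathcal{F}$.

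The hard part is exactly this density statement, because any allowed perturbation of $\varphi_0\in\mathcal{F}$ must agree with $\iota$ on $\fix(X,T)$. I would split the verification by location. When both $\overline{U_i}$ and $\overline{U_j}$ are bounded away from $\fix(X,T)$, the orbits of their points sweep through the unconstrained region, and a finite family of bump perturbations supported in suitable orbit neighborhoods separates $F_\varphi(\overline{U_i})$ from $F_\varphi(\overline{U_j})$. When one of the closures accumulates on $\fix(X,T)$, one exploits that a non-fixed point $y$ admits a translation $t\in\mathbb{R}^n$ with $T^t y$ at a definite distance from any prescribed fixed point (by continuity of $T$ and metrizability of $X$), so local modifications of $\varphi$ performed in the unconstrained region can be arranged to detect the nontrivial orbit of $y$ against the constant orbit $\iota(x_0)$ of a nearby $x_0\in\fix(X,T)$; the fact that $\iota$ is an embedding already handles pairs of distinct fixed points. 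Making these perturbations simultaneous, uniform in small $\epsilon$, and compatible across the countable family of pairs $(i,j)$ is the technical heart of the theorem, matching the constructions of Jaworski and Chen in the general connected locally compact group setting specialized to $G=\mathbb{R}^n$.
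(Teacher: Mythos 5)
Your setup — reducing the ``if'' direction to finding some $\varphi\in\mathcal F=\{\varphi\in C(X,I):\varphi|_{\fix(X,T)}=\iota\}$ whose orbit map $F_\varphi$ is injective, using Tietze for nonemptiness and a Baire category argument over a countable family of open separation sets $\mathcal G_{i,j}$ — is exactly the same framework the paper outlines for this theorem (and reuses, in more elaborate form, for the Lipschitz main theorem). Openness of $\mathcal G_{i,j}$ and the reduction from ``injective $F_\varphi$'' to ``$\bigcap\mathcal G_{i,j}$ nonempty'' are fine. The genuine gap is in the density step, which you wave at with ``a finite family of bump perturbations supported in suitable orbit neighborhoods.'' This is not a proof, and worse, it elides the phenomenon that makes the theorem nontrivial: points $p\notin\fix(X,T)$ can still have a nontrivial connected stabilizer $\mathfrak g_p\subset\mathbb R^n$. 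For such $p$, every admissible $F_\varphi(p)$ is automatically $\mathfrak g_p$-invariant, so a naively placed bump on $\varphi$ need not separate $F_\varphi(p)$ from $F_\varphi(q)$ when $q$ has the same or a larger stabilizer. The mechanism Jaworski and Chen actually use — and what the paper's outline records — is to pick $u\in\mathbb R^n$ with $T^u p\neq p$, restrict $T$ to the one-parameter subgroup $L=\mathbb R u$, invoke Kakutani's one-dimensional separation lemma (\cite[Lemma~2]{Kakutani}) for $T|_L$ to obtain an $L$-equivariant perturbation $g\colon X\to C(L,I)$ near $f|_L$ separating neighborhoods of $p$ and $q$, and then promote it to an $\mathbb R^n$-equivariant map via $h(x)(t)=g(T^t x)(0)$. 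This one-dimensional reduction is the key idea and it is entirely absent from your proposal.

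A secondary but telling confusion: you call making the perturbations ``compatible across the countable family of pairs $(i,j)$'' the technical heart. It isn't — the whole point of the Baire formulation is that no cross-pair compatibility is needed. One only needs to show each single $\mathcal G_{i,j}$ is dense; the Baire category theorem then produces a point in the countable intersection for free. Framing the difficulty this way suggests the density argument itself (for one fixed pair) has not been thought through, and deferring to ``the constructions of Jaworski and Chen'' at the end confirms that the proposal identifies the scaffolding but not the load-bearing lemma.
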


We briefly explain the idea of the proof of this theorem.
As in Theorem \ref{theorem: Bebutov--Kakutani}, the “only if” part of the statement is trivial.
We are going to outline the proof of the “if” part.
So we assume that we are given a topological embedding $\iota \colon \fix(X, T)\to I$.
By the Tietze extension theorem, we can find an equivariant continuous map (not necessarily embedding) 
$f\colon X\to C(\mathbb{R}^n, I)$ that satisfies $f(x)(t) = \iota(x)$ for $x\in \fix(X, T)$ and $t\in \mathbb{R}^n$.
(Here $f(x)(t)\in I$ is the value of the continuous function $f(x)\in C(\mathbb{R}^n, I)$ at the point $t\in \mathbb{R}^n$.)
We try to apply successive perturbation to $f$ over $X\setminus \fix(X, T)$ so that the resulting map becomes an embedding.

Suppose $p$ and $q$ are two distinct points of $X$.
Assume $p, q\not\in \fix(X, T)$.
Then there is $u\in \mathbb{R}^n$ with $T^u p \neq p$.
Let $L = \mathbb{R}u\subset \mathbb{R}^n$ be the one-dimensional subspace generated by $u$.
We consider the restriction $T|_L$ of $T$ to the subgroup $L$.
Since $L$ is one-dimensional, we may apply \cite[Lemma 2]{Kakutani}---used in the proof of Theorem \ref{theorem: Bebutov--Kakutani}---to $T|_L$, thereby obtaining small neighborhoods $A$ and $B$ of $p$ and $q$ respectively and an $L$-equivariant continuous map 
$g\colon X\to C(L, I)$ such that 
\begin{itemize}
  \item $g(x)$ is a small perturbation of $f(x)|_L\colon L\to I$,
  \item $g(A) \cap g(B) = \emptyset$.
\end{itemize}

We define an $\mathbb{R}^n$-equivariant continuous map $h\colon X\to C(\mathbb{R}^n, I)$ by 
$h(x)(t) = g(T^t x)(0)$ $(x\in X, t\in \mathbb{R}^n)$.
Then we have $h(x) \approx f(x)$ and that $h(A) \cap h(B) = \emptyset$.
Namely, $h$ is a small perturbation of $f$ that can distinguish some neighborhoods of $p$ and $q$.
We iteratively apply this perturbation procedure to $f$ infinitely many times. (This can be formalized by using the 
Baire category theorem.)
Then the resulting map provides an embedding of $X$ into $C(\mathbb{R}^n, I)$.

In summary, the idea is to reduce the problem to the one-dimensional case and apply the proof of Theorem \ref{theorem: Bebutov--Kakutani}.
The key fact is that $\mathbb{R}^n$ contains a lot of one-parameter subgroups.

We have not used the commutativity of $\mathbb{R}^n$ at all. So the argument equally works well for much more general topological groups
\cite{Jaworski, Chen75a, Chen75b}.
For example, Jaworski \cite{Jaworski} proved a generalization of the Bebutov--Kakutani theorem to all connected locally compact groups.
(The paper \cite{Jaworski} is his thesis at the university of Maryland and may be not very easily accessible.
However readers can read an outline of the proof in the book of Auslander \cite[pp.189-190]{Auslander}.)

\begin{remark}
A key fact, once again, is that every connected locally compact group contains a a rich supply of one-parameter subgroups.
The opposite extreme is the case of discrete groups (say, the group of integers $\mathbb{Z}$).
Discrete groups have no one-parameter subgroups.
Therefore the embedding problem (for e.g. $C(\mathbb{Z}, I)$) has a completely different nature.
The first step in this problem was given in the paper of Jaworski \cite{Jaworski}.
The next decisive step was given by 
Lindenstrauss and Weiss \cite{Lindenstrauss--Weiss, Lindenstrauss}, which discovered that 
\textit{mean dimension theory} \cite{Gromov99} plays a crucial role in the problem.
Readers may consult
\cite{Gutman--Tsukamoto embedding, Gutman--Qiao--Tsukamoto} for recent developments in this direction.
\end{remark}

\subsection{Lipschitz refinements and the main result} \label{subsection: main result}

The Bebutov--Kakutani theorem (Theorem \ref{theorem: Bebutov--Kakutani}) is an excellent result, however 
there is an unsatisfactory point.
In the theorem, we are given an $\mathbb{R}$-action on a \textit{compact} space $X$, but the target space $C(\mathbb{R}, I)$ is 
not compact (nor locally compact). 
To overcome this point, Jin and the first and third named authors of the present paper developed a 
\textit{Lipschitz refinement} of the theorem in \cite{GJT}.

Let $\lip_1(\mathbb{R}, I)$ be the space of maps $\varphi\colon \mathbb{R} \to I$ that satisfy 
$|\varphi(t) - \varphi(u)| \leq |t-u|$ for all $t, u\in \mathbb{R}$.
This is a compact subset of $C(\mathbb{R}, I)$.
The compactness follows from the Arzela--Ascoli theorem.
The group $\mathbb{R}$ naturally acts on it by the translation as before.

The paper \cite{GJT} proved:

\begin{theorem}[Jin--Gutman--Tsukamoto 2019]  \label{theorem: Lipschitz Bebutov--Kakutani}
Let $T\colon \mathbb{R}\times X\to X$ be a continuous action of $\mathbb{R}$ on a compact metrizable space $X$.
It equivariantly embeds in $\lip_1(\mathbb{R},I)$ if and only if $\fix(X, T)$ topologically embeds in the unit interval $I$.
\end{theorem}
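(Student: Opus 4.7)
The ``only if'' direction is trivial: the fixed points of the shift action on $\lip_1(\mathbb{R},I)$ are exactly the constant functions, forming a subspace canonically homeomorphic to $I$. So assume we are given a topological embedding $\iota\colon \fix(X,T)\to I$; the task is to produce an equivariant topological embedding $X\hookrightarrow \lip_1(\mathbb{R},I)$. My plan is to run a Baire-category argument parallel to Kakutani's proof of Theorem \ref{theorem: Bebutov--Kakutani}, but inside the compact space of orbit-wise $1$-Lipschitz equivariant maps.

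First I would set up the parameter space. Extend $\iota$ by the Tietze extension theorem to a continuous $F\colon X\to I$, and define a base map $f_0\colon X\to \lip_1(\mathbb{R},I)$ by the inf-convolution
\[ f_0(x)(t)=\inf_{s\in \mathbb{R}}\bigl(F(T^s x)+|t-s|\bigr); \]
this is automatically $1$-Lipschitz in $t$, equivariant in $x$, $[0,1]$-valued, and reduces to the constant $\iota(x)$ whenever $x\in \fix(X,T)$. Let $\mathcal{F}$ denote the set of equivariant continuous maps $f\colon X\to \lip_1(\mathbb{R},I)$ that restrict to $x\mapsto \iota(x)$ on $\fix(X,T)$, equipped with the uniform metric coming from a compatible metric on the compact (by Arzela--Ascoli) space $\lip_1(\mathbb{R},I)$. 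Then $\mathcal{F}$ is a nonempty complete metric space.

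Next I would fix a countable base $\{U_i\}_{i\in \mathbb{N}}$ of open sets of $X$, and for each pair $(i,j)$ with $\overline{U_i}\cap \overline{U_j}=\emptyset$ consider
\[ \mathcal{V}_{i,j}=\bigl\{\,f\in \mathcal{F}:\; f(\overline{U_i})\cap f(\overline{U_j})=\emptyset\,\bigr\}. \]
Each $\mathcal{V}_{i,j}$ is open in $\mathcal{F}$, and any element of $\bigcap_{i,j}\mathcal{V}_{i,j}$ is automatically a topological embedding. By the Baire category theorem the proof thus reduces to showing that each $\mathcal{V}_{i,j}$ is dense, which in turn reduces to the following \emph{Lipschitz perturbation lemma}: given $f\in \mathcal{F}$, distinct points $p,q\in X$ with at least one not in $\fix(X,T)$, and $\varepsilon>0$, there exist $f'\in \mathcal{F}$ with $d(f,f')<\varepsilon$ and neighborhoods $A\ni p$, $B\ni q$ with $f'(A)\cap f'(B)=\emptyset$. (If both $p,q$ are fixed, membership in $\mathcal{V}_{i,j}$ is automatic from the injectivity of $\iota$.)

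The main obstacle is the perturbation lemma itself: Kakutani's original argument adds small continuous bumps along orbits, but here every perturbation must remain $1$-Lipschitz along each orbit. My approach is to pick $u\in \mathbb{R}$ with $T^u p\neq p$, construct a small continuous $\psi\colon X\to [0,\alpha]$ (with $\alpha$ proportional to $\varepsilon$) localized near the orbit segment through $p$ and satisfying the orbit-Lipschitz bound $|\psi(T^t x)-\psi(T^s x)|\leq |t-s|$ (again achievable by an inf-convolution with $|\cdot|$ exactly as for $f_0$), and then to set
\[ f'(x)(t)=(1-\lambda)\,f(x)(t)+\lambda\,\chi(T^t x) \]
for a small $\lambda>0$, where $\chi\colon X\to I$ is built from $\psi$ so that $\chi(p)\neq \chi(q)$ by a definite amount. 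Convex combinations of $1$-Lipschitz functions are $1$-Lipschitz, hence $f'\in \mathcal{F}$; the distance $d(f,f')$ is controlled by $\lambda$; and evaluating at $t=0$ separates small neighborhoods of $p$ and $q$. Combined with the Baire category step above, this produces the desired equivariant embedding of $(X,T)$ into $\lip_1(\mathbb{R},I)$.
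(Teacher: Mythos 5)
Your overall architecture is sound and matches the right high‑level strategy: build a base map via Tietze extension plus the inf‑convolution $f_0(x)(t)=\inf_s\bigl(F(T^s x)+|t-s|\bigr)$ (which is indeed equivariant, $1$-Lipschitz, $I$-valued, continuous, and agrees with $\iota$ on $\fix(X,T)$), make $\mathcal{F}$ a complete metric space, and run a Baire category argument over countably many open separation conditions. But the perturbation step, which is the heart of the matter, has a real gap. With $f'(x)(t)=(1-\lambda)f(x)(t)+\lambda\chi(T^t x)$ and $\lambda$ small, the claim that ``evaluating at $t=0$ separates small neighborhoods of $p$ and $q$'' fails because of cancellation: for $x$ near $p$ and $y$ near $q$,
\[
f'(x)(0)-f'(y)(0)=(1-\lambda)\bigl(f(x)(0)-f(y)(0)\bigr)+\lambda\bigl(\chi(x)-\chi(y)\bigr),
\]
and since $f$ is arbitrary and $f(x)(0)-f(y)(0)$ can take any value in $[-1,1]$ on $\overline{U_i}\times\overline{U_j}$, the two terms can cancel unless $\lambda>\tfrac12$ --- but then $d(f,f')$ is no longer small. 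A value-based signal at a single time is not robust under small additive perturbations of $f$. This is precisely why the proof in this paper (and in [GJT]) encodes information not in function \emph{values} but in the \emph{local Lipschitz constant} of the perturbed function near specified marker points: see Lemma \ref{lemma: multi bump function} and Propositions \ref{prop: local perturbation map, no translation invariance}, \ref{prop: local perturbation map}, where $L(\varphi',p_k,\delta)=(1-s_k)c_2+s_k c_3$ is a signal that, by construction, cannot be erased by whatever $f$ does. You should also check your construction near $\fix(X,T)$: since $\chi$ is forced to agree with $\iota$ on fixed points, continuity could drag $\chi(p)$ toward $\iota(q)$ when $p\notin\fix(X,T)$ is close to the fixed set, so the ``definite amount'' by which $\chi(p)\ne\chi(q)$ needs an argument.

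There is also a bookkeeping gap: density of your $\mathcal{V}_{i,j}$ --- a separation condition on the \emph{fixed} sets $\overline{U_i},\overline{U_j}$ --- does not follow from a perturbation lemma that, given $f$, returns neighborhoods $A\ni p$, $B\ni q$ depending on $f$. The neighborhoods must be chosen \emph{before} $f$ and $\varepsilon$ enter the picture (depending only on $p,q$), so that the set $\{f:f(A)\cap f(B)=\emptyset\}$ is a single, well-defined open set whose density one proves; the countable family then comes from the Lindel\"{o}f property of $X\times X$ minus the diagonal, exactly as in the proof of Theorem \ref{theorem: main theorem} via Propositions \ref{proposition: C_k(A) is open and dense} and \ref{proposition: C_k(B, B^prime) is open and dense}. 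The order of quantifiers matters for the Baire argument to close.
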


The space $\lip_1(\mathbb{R},I)$ is compact (and metrizable).
Therefore, arguably, this statement is more satisfactory.

As we have already seen in \S \ref{subsection: Bebutov--Kakutani theorem}, 
the Bebutov--Kakutani theorem can be generalized to $\mathbb{R}^n$-actions rather directly, 
but Theorem \ref{theorem: Lipschitz Bebutov--Kakutani} cannot.
The idea of the proof of the multidimensional Bebutov--Kakutani theorem 
(Theorem \ref{theorem: multidimensional Bebutov--Kakutani})
is to reduce the problem to the one-dimensional case.
However the same idea does not work for the Lipschitz version.
The difficulty is that, even if a continuous function $\varphi\colon \mathbb{R}^n\to I$ is Lipschitz on a particular line $L\subset \mathbb{R}^n$,
it does not imply that $\varphi$ is Lipschitz on the entire space $\mathbb{R}^n$.
Therefore we cannot reduce the problem to the one-dimensional case and we need to use the geometry of $\mathbb{R}^n$ more seriously.

Now we have arrived at the point where we formulate our main problem. 
Although it may be a bit tedious, we will carefully introduce our terminology once again.
For $t=(t_1, \dots, t_n)\in \mathbb{R}^n$ we set $|t| = \sqrt{t_1^2+\dots+t_n^2}$.
We define $\lip_1(\mathbb{R}^n, I)$ as the space of 
all one-Lipschitz maps from $\mathbb{R}^n$ to $I = [0,1]$.
Namely, this is the space of all maps $\varphi\colon \mathbb{R}^n\to I$ satisfying 
$|\varphi(t)-\varphi(u)|\leq |t-u|$ for all $t, u\in \mathbb{R}^n$.
This space is compact (with respect to the compact-open topology) and metrizable, e.g. we can define a metric $\mathbf{d}$ 
on it by\footnote{Notice that a sequence $\{\varphi_n\}$ converges to $\varphi$ with respect to this metric if and only if
$\varphi_n$ uniformly converges to $\varphi$ over every compact subset of $\mathbb{R}^n$. So the metric $\mathbf{d}$ defines the 
compact-open topology.} 
\begin{equation} \label{eq: metric on the space of Lipschitz functions}
  \mathbf{d}(\varphi, \psi) = \sup_{k\geq 1} \left(2^{-k} \sup_{|t|\leq k} |\varphi(t)-\psi(t)|\right), \quad 
    \left(\varphi, \psi\in \lip_1(\mathbb{R}^n, I)\right). 
\end{equation}   
The group $\mathbb{R}^n$ continuously acts on $\lip_1(\mathbb{R}^n, I)$ by the shift:
\[ \sigma\colon \mathbb{R}^n\times \lip_1(\mathbb{R}^n, I) \to \lip_1(\mathbb{R}^n, I), \quad 
     \left(u, \varphi(t)\right)\mapsto \varphi(t+u). \]

Let $X$ be a compact metrizable space and $T\colon \mathbb{R}^n\times X\to X$ a continuous action of $\mathbb{R}^n$ on it.
We say that $(X,T)$ \textbf{equivariantly embeds} (or $\mathbb{R}^n$-equivariantly embeds) in $\operatorname{Lip}_1(\mathbb{R}^n,I)$ if there exists a
topological embedding
$
f:X\longrightarrow \operatorname{Lip}_1(\mathbb{R}^n,I)
$
that satisfies
$
f\circ T_t=\sigma_t\circ f$ for all $t\in\mathbb{R}^n.
$
Now we can state our main problem.

\begin{problem} \label{problem: main problem}
State a necessary and sufficient condition for embedding of an  
arbitrarily given $\mathbb{R}^n$-action $(X,T)$ into $\operatorname{Lip}_1(\mathbb{R}^n,I)$.
\end{problem}

The first approach to this problem was given by the first and second named authors in \cite{GH}.
For explaining their result, we need to introduce one more terminology.
Let $T\colon X\times \mathbb{R}^n\to X$ be a continuous action of $\mathbb{R}^n$ on a compact metrizable space $X$.
We say that $(X, T)$ is \textbf{weakly locally free} if, for any $x\in X\setminus \fix(X, T)$, there is an open neighborhood $U$ of the origin 
in $\mathbb{R}^n$ such that we have $T^t x\neq x$ for all $t\in U\setminus \{0\}$.
The paper \cite{GH} proved the following partial answer to Problem \ref{problem: main problem}.

\begin{theorem}[Gutman--Huo] \label{theorem: weakly locally free}
Let $T\colon \mathbb{R}^n \times X \to X$ be a weakly locally free continuous action of $\mathbb{R}^n$ on a compact metrizable space $X$.
It equivariantly embeds in $\lip_1(\mathbb{R}^n, I)$ if and only if $\fix(X, T)$ topologically embeds in the unit interval $I$.
\end{theorem}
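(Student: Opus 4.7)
The ``only if'' direction is immediate: the fixed points of the shift action on $\lip_1(\mathbb{R}^n, I)$ are exactly the constant functions, a subspace homeomorphic to $I$, so any equivariant embedding of $(X, T)$ restricts to a topological embedding $\fix(X, T) \hookrightarrow I$.

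For the ``if'' direction, fix a topological embedding $\iota : \fix(X, T) \hookrightarrow I$. The plan is a Baire category argument modeled on the proof of Theorem \ref{theorem: Lipschitz Bebutov--Kakutani} from \cite{GJT}, carried out in the space
\[
\mathcal{F}_\iota := \bigl\{ f \in C(X, \lip_1(\mathbb{R}^n, I)) : f \text{ is $\mathbb{R}^n$-equivariant and } f(x) \equiv \iota(x) \text{ for } x \in \fix(X, T) \bigr\},
\]
which is a closed convex subset of $C(X, \lip_1(\mathbb{R}^n, I))$ under the uniform metric, hence complete. Nonemptiness follows from a McShane extension of $\iota$ using the action-augmented continuous pseudometric
\[
\rho(x, y) := \inf_{t \in \mathbb{R}^n}\bigl(|t| + d_X(T^t x, y)\bigr),
\]
which satisfies $|\rho(T^t x, y) - \rho(x, y)| \leq |t|$ and therefore converts $\rho$-Lipschitz functions on $X$ into orbit-Lipschitz ones. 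Since $X$ is compact and $\lip_1(\mathbb{R}^n, I)$ metrizable, injective members of $\mathcal{F}_\iota$ are topological embeddings; they form the countable intersection $\bigcap_k G_k$, where $G_k := \{f \in \mathcal{F}_\iota : \inf_{d_X(x, y) \geq 1/k} \mathbf{d}(f(x), f(y)) > 0\}$ is open. A standard finite-cover argument using compactness of $\{(x, y) \in X \times X : d_X(x, y) \geq 1/k\}$ reduces the density of each $G_k$ to the following \textbf{Key Separation Lemma}: for every $f \in \mathcal{F}_\iota$, every $\epsilon > 0$, and every pair of distinct $p, q \in X$, there exist $g \in \mathcal{F}_\iota$ with $\|g - f\|_\infty < \epsilon$ and open neighborhoods $A \ni p$, $B \ni q$ with $g(A) \cap g(B) = \emptyset$.

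The proof of the Key Separation Lemma splits by cases. If $p, q \in \fix(X, T)$, injectivity of $\iota$ gives $f(p) \neq f(q)$ directly and $g := f$ suffices. Otherwise, assume $q \notin \fix(X, T)$ and set $g := (1 - \lambda) f + \lambda h$ for small $\lambda > 0$, where $h \in \mathcal{F}_\iota$ is an auxiliary map with $h(p) \neq h(q)$. Convexity of $\lip_1(\mathbb{R}^n, I)$ together with the common restriction $\iota$ on $\fix(X, T)$ keep $g$ in $\mathcal{F}_\iota$; smallness of $\lambda$ keeps $g$ within $\epsilon$ of $f$; and an appropriate $\lambda$ guarantees $g(p) \neq g(q)$, producing the required neighborhoods $A, B$ by continuity. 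The auxiliary $h$ is constructed in the form $h(x)(t) := \phi(T^t x)$, where $\phi : X \to I$ is continuous, agrees with $\iota$ on $\fix(X, T)$, is $1$-Lipschitz with respect to $\rho$ (so $h(x) \in \lip_1(\mathbb{R}^n, I)$ for each $x$), and satisfies $\phi(p) \neq \phi(q)$.

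The \textbf{main obstacle}, and where weak local freeness of $(X, T)$ is essential, is the construction of such a $\phi$. Weak local freeness at $q$ supplies $u_0 \in \mathbb{R}^n \setminus \{0\}$ near the origin with $T^{u_0} q \neq q$, ensuring that the isotropy subgroup of $q$ is discrete near $0$; this in turn guarantees that equivariant maps in $\mathcal{F}_\iota$ are not forced to degenerate at $q$ (were the isotropy at $q$ to accumulate at $0$, every $h \in \mathcal{F}_\iota$ would have $h(q) \in \lip_1(\mathbb{R}^n, I)$ factoring through a proper quotient of $\mathbb{R}^n$, creating additional embedding obstructions beyond $\fix(X, T) \hookrightarrow I$). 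Under this hypothesis, one produces $\phi$ by McShane extension from $\fix(X, T) \cup \{q\}$ in $\rho$, assigning $\iota$ on $\fix(X, T)$ and a strategically chosen new value at $q$; the compatibility constraint is feasible because $\rho(q, \fix(X, T)) > 0$ (as $\fix(X, T)$ is closed and $q \notin \fix(X, T)$), and $\phi(p) \neq \phi(q)$ follows from a quantitative analysis of $\rho$-separation between $p$ and $q$ exploiting $u_0$. Granting the Key Separation Lemma, the Baire category theorem delivers $f \in \bigcap_k G_k$, which is the desired equivariant embedding.
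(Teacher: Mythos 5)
Your proposal follows the right skeleton---a Baire-category argument in a complete space of equivariant maps constrained to agree with $\iota$ on $\fix(X,T)$---and this is indeed the architecture both of the proof in \cite{GH}, from which Theorem~\ref{theorem: weakly locally free} is quoted, and of the paper's Theorem~\ref{theorem: main theorem}, which subsumes it. However, there are genuine gaps, all clustering around the Lipschitz constraint, which is exactly what makes the statement nontrivial.

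First, the expression $\rho(x,y)=\inf_t\bigl(|t|+d_X(T^t x,y)\bigr)$ is in general neither symmetric nor does it satisfy the triangle inequality (both would require $d_X$ to be $T$-invariant), so McShane extension is not directly applicable. More importantly, even after fixing that, the extension forces $\iota$ to be Lipschitz with respect to $\rho|_{\fix(X,T)}=d_X|_{\fix(X,T)}$, and a topological embedding $\iota\colon\fix(X,T)\hookrightarrow I$ need not be Lipschitz for any metric compatible with the topology of $X$. The paper never asks $\iota$ to be Lipschitz: it extends $\iota$ to a merely continuous equivariant map by Tietze and then repairs it by the \emph{Lipschitz filter} of Proposition~\ref{proposition: Lipschitz filter}, whose decisive property is that it fixes constant functions and so preserves the prescribed values on $\fix(X,T)$. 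That filter is the central technical device and is entirely absent from your argument.

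Second, your Key Separation Lemma places the neighborhoods $A\ni p$, $B\ni q$ \emph{after} the perturbed map $g$, so they depend on $g$ and shrink with $\varepsilon$. This does not feed the Baire category theorem: one needs, for each pair $(p,q)$, \emph{fixed} closed neighborhoods $A,B$ depending only on $(p,q)$ such that $\{g:g(A)\cap g(B)=\emptyset\}$ is open \emph{and dense}, and then a countable intersection gives injectivity. Openness is easy; density is where the geometry enters, and it requires building a local section of the action near $p$ and $q$ (Proposition~\ref{prop: local section}) and perturbing along the transversal slices in an equivariant, Lipschitz-preserving way (Propositions~\ref{prop: local perturbation map, no translation invariance}, \ref{prop: local perturbation map}, \ref{prop: extension and filter}). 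Your convex combination $g_\lambda=(1-\lambda)f+\lambda h$ stays in $\lip_1$ and can separate the single pair $(p,q)$, but it offers no mechanism to separate $g(A)$ from $g(B)$ uniformly for sets fixed in advance. Finally, the role of weak local freeness is misattributed: $\rho(q,\fix(X,T))>0$ holds for every $q\notin\fix(X,T)$ by compactness, regardless of the hypothesis. What weak local freeness actually gives is $\mathfrak g_x=0$ for all $x\notin\fix(X,T)$, so that the stratification $X=X_0\supset X_1\supset\dots\supset X_n$ collapses to $X\supset\fix(X,T)$ and every local section has full rank $n$; dropping that assumption is precisely what requires the stratified analysis of the present paper (see the remark closing \S\ref{subsection: strategy of the proof}).
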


This theorem gives a necessary and sufficient condition for the embeddability under the additional assumption that 
$(X, T)$ is weakly locally free.
The paper \cite{GH} applied Theorem \ref{theorem: weakly locally free} to a problem in ergodic theory and proved 
that the shift action on $\lip_1(\mathbb{R}^n, I)$ becomes a 
topological model for every free measure-preserving $\mathbb{R}^n$-action.
This is the multidimensional version of a classical theorem of Eberlein \cite{Eberlein73}.

Although Theorem \ref{theorem: weakly locally free} is sufficient for the application to an ergodic problem studied in
\cite{GH}, it does not provide a complete answer to Problem \ref{problem: main problem}.
Therefore \cite[Remark 4.1]{GH} asked whether one can remove the assumption that $(X, T)$ is weakly locally free. The
purpose of the present paper is to solve this question affirmatively. Let us state our
main result.
\begin{theorem}[Main theorem] \label{theorem: main theorem}
Let $T\colon \mathbb{R}^n\times X\to X$ be a continuous action of $\mathbb{R}^n$ on a compact metrizable space $X$.
It equivariantly embeds in $\lip_1(\mathbb{R}^n, I)$ if and only if 
$\fix(X, T)$ topologically embeds in the unit interval $I$.
\end{theorem}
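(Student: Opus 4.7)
The \emph{only if} direction is immediate, as the fixed point set of the shift $\sigma$ on $\lip_1(\mathbb{R}^n, I)$ consists of constant functions and is naturally homeomorphic to $I$. For the converse, given an embedding $\iota \colon \fix(X, T) \hookrightarrow I$, I would parametrize equivariant continuous maps $f \colon X \to \lip_1(\mathbb{R}^n, I)$ extending $\iota$ by continuous $F \colon X \to I$ via $F(x) := f(x)(0)$ and conversely $f(x)(t) := F(T^t x)$. The relevant parameter space is
\[
\mathcal{F} := \bigl\{F \in C(X, I) : F|_{\fix(X,T)} = \iota,\ |F(T^t x) - F(x)| \leq |t|\ \forall x \in X,\ t \in \mathbb{R}^n\bigr\},
\]
which is closed in $C(X, I)$ under the sup metric, hence a complete metric space. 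It is nonempty, since for any Tietze extension $\tilde\iota$ of $\iota$ the McShane-type regularization $\hat\iota(x) := \inf_{s \in \mathbb{R}^n}(\tilde\iota(T^s x) + |s|)$ lies in $\mathcal{F}$ (continuity follows since $\tilde\iota \geq 0$ confines the minimizers to $|s| \leq 1$).

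I would then invoke Baire category. Fix a countable base $\{V_k\}$ of $X$ and, for each pair $(k, l)$ with $\overline{V_k} \cap \overline{V_l} = \emptyset$, consider
\[
U_{k, l} := \bigl\{F \in \mathcal{F} : f_F(\overline{V_k}) \cap f_F(\overline{V_l}) = \emptyset\bigr\}.
\]
Each $U_{k, l}$ is open in $\mathcal{F}$ because $F \mapsto f_F$ is $1$-Lipschitz into $C(X, \lip_1(\mathbb{R}^n, I))$, and any $F \in \bigcap_{k, l} U_{k, l}$ yields an injective and hence equivariantly embedded $f_F$. Thus the task reduces to showing each $U_{k,l}$ is dense, which in turn is implied by the following Separation Lemma: given $F \in \mathcal{F}$, $\epsilon > 0$, and distinct $p, q \in X$, produce $F' \in \mathcal{F}$ with $\|F' - F\|_\infty < \epsilon$ and open $A \ni p, B \ni q$ with $f_{F'}(A) \cap f_{F'}(B) = \emptyset$. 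Such $F'$ will be built from a small continuous bump $g \colon X \to [0, \epsilon]$ vanishing on $\fix(X, T)$ via
\[
F'(x) := \inf_{s \in \mathbb{R}^n}\bigl((F + g)(T^s x) + |s|\bigr),
\]
which automatically lies in $\mathcal{F}$ and satisfies $F \leq F' \leq F + g$; the content is the design of $g$. When $p, q \in \fix(X, T)$, $g \equiv 0$ works by injectivity of $\iota$; when exactly one is fixed, a small orbit-Lipschitz bump near a translate of the non-fixed one suffices; when both have discrete stabilizers (weakly locally free), $g$ is precisely the perturbation in the proof of Theorem \ref{theorem: weakly locally free} in \cite{GH}.

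The novel contribution, and the step I expect to be the main obstacle, is the case in which at least one stabilizer $\Gamma_p := \{t \in \mathbb{R}^n : T^t p = p\}$ has nontrivial identity component $V \subset \mathbb{R}^n$. Writing $\mathbb{R}^n = V \oplus W$ so that $\Gamma_p \cap W$ is discrete in $W$, the orbit of $p$ factors through the lower-dimensional quotient $W / (\Gamma_p \cap W)$. I would design $g$ as a small bump supported in a thin tube around the orbit of $p$, corresponding on $W / (\Gamma_p \cap W)$ to a $1$-Lipschitz function separating the images of $p$ and $q$ in the quotient orbit map. The chief difficulty is that the stabilizer map $x \mapsto \Gamma_x$ is merely upper semicontinuous (in the Chabauty topology on closed subgroups of $\mathbb{R}^n$), so nearby orbits have strictly smaller stabilizers; these orbits ``unfold'' in the $V$-direction and the McShane regularization can spread $g$ along them in ways that destroy either the separation near $q$, the continuity of $F'$, or the orbit-Lipschitz bound. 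I would control this by shrinking the $W$-support of $g$ and exploiting upper semicontinuity of stabilizers to confine the spread to a tubular neighborhood of the orbit of $p$ with a controlled product-like structure, then induct on $\dim V$ (the base $\dim V = 0$ being exactly \cite{GH}) to reduce successively to lower-dimensional instances of the embedding problem.
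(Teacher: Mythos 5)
Your framework is sound in its outer shell and agrees with the paper's at the meta-level: parametrize equivariant maps $X\to\lip_1(\mathbb{R}^n,I)$ extending $\iota$ by their values at $0$ (your $\mathcal{F}$ is the paper's $\mathcal{C}$ in different notation), verify nonemptiness and completeness, and run a Baire-category argument reducing the theorem to a density statement. Your nonemptiness argument via McShane regularization of a Tietze extension is correct (the infimum is confined to $|s|\le 1+\epsilon$, giving continuity), and the formula $F'(x)=\inf_s((F+g)(T^sx)+|s|)$ with $F\le F'\le F+g$ is a legitimate way to force a perturbed function back into $\mathcal{F}$.

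But the Separation Lemma is where the whole content lies, and in the one case you yourself flag as the main obstacle you do not prove it — you sketch a plan that does not close. Two concrete difficulties. First, the ``induction on $\dim V$'' has no well-founded structure: $\dim\mathfrak{g}_x$ is only upper semicontinuous, $X_k\setminus X_{k+1}$ is not a closed invariant set, and there is no quotient dynamical system on which a lower-dimensional instance of the embedding problem lives — the orbits of nearby points with strictly smaller stabilizer genuinely unfold in the $V$-direction and cannot be quotiented away. Second, and more decisively, you have no mechanism to ensure that the McShane regularization does not erase the bump $g$. If $g$ is supported on a thin tube around the orbit of $p$, the infimum $\inf_s((F+g)(T^sx)+|s|)$ can escape the support of $g$ along a cheap direction of the orbit and return $F'(p)=F(p)$, leaving nothing to separate. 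Ensuring this does not happen while also keeping $F'$ continuous and orbit-Lipschitz across points whose stabilizer dimension drops is exactly the problem, and ``shrinking the $W$-support and exploiting upper semicontinuity'' is not an argument but a restatement of the obstacle.

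The paper sidesteps both issues with a different decomposition. Rather than separate arbitrary pairs, it stratifies $X=X_0\supset X_1\supset\cdots\supset X_n$ by $\dim\mathfrak{g}_x$ and proves two families of density results for each $k$: one ($\mathcal{C}_k(A)$) forcing $f$ to send $X_k\setminus X_{k+1}$ outside $Y_{k+1}$, which automatically separates any two points in different strata \emph{and} any two same-stratum points with $\mathfrak{g}_p\neq\mathfrak{g}_q$ (since $f(p)=f(q)$ would then be $(\mathfrak{g}_p+\mathfrak{g}_q)$-invariant); and one ($\mathcal{C}_k(B,B')$) separating same-stratum pairs with $\mathfrak{g}_p=\mathfrak{g}_q$ via bump heights encoding a fingerprint. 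This removes the need for pairwise inter-stratum bumps entirely. For the Lipschitz projection it replaces your inf-convolution by an equivariant ``filter'' $F\colon C(\mathbb{R}^n,I)\to\lip_{c'}(\mathbb{R}^n,I)$ (Proposition \ref{proposition: Lipschitz filter}) built from discrete grids and averaging, whose key property — absent from McShane regularization — is locality: if $\varphi$ vanishes on $B_R(\xi)$ then $F(\varphi)(\xi)=0$, so local perturbations made on a local section survive the projection to Lipschitz. Without an analogue of that locality your $F'$-step cannot be made to work, and without the stratified decomposition your induction has no base to stand on. The gap is real.
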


Thus we can remove the assumption of weak local freeness in Theorem \ref{theorem: weakly locally free}.
This is a complete answer to Problem \ref{problem: main problem}.

As we have already mentioned, the Bebutov--Kakutani theorem (Theorem \ref{theorem: Bebutov--Kakutani}) can be 
generalized to much more general topological groups, including all connected locally compact groups.
Therefore it is also natural to expect the validity of some noncommutative generalizations of Theorem \ref{theorem: main theorem}.
Probably posing the following question is the first step toward this direction.

\begin{problem}[Open problem]
Can one generalize Theorem \ref{theorem: main theorem} to continuous actions of connected Lie groups?
\end{problem}

Hopefully this problem will be the subject of future work.

\subsection{Strategy of the proof}  \label{subsection: strategy of the proof}

Here we explain the outline of the proof of Theorem \ref{theorem: main theorem}.
Let $T\colon \mathbb{R}^n\times X\to X$ be a continuous action of $\mathbb{R}^n$ on a compact metrizable space $X$.
If it equivariantly embeds in $\lip_1(\mathbb{R}^n, I)$ then the fixed points set $\fix(X, T)$ topologically embeds in 
$\fix\left(\lip_1(\mathbb{R}^n, I), \sigma\right)$. The latter is homemorphic to the unit interval $I = [0,1]$.
Therefore the “only if” part of Theorem \ref{theorem: main theorem} is obvious. 
The problem is to prove the “if” part of the statement.

Suppose $\fix(X, T)$ topologically embeds in $I$.
This means that there exists a topological embedding $\iota\colon \fix(X,T)\to \fix\left(\lip_1(\mathbb{R}^n, I), \sigma\right)$.
Let $\mathcal{C}$ be the space of continuous maps $f\colon X\to \lip_1(\mathbb{R}^n, I)$ that satisfies 
$f\circ T^t = \sigma^t \circ f$ for all $t\in\mathbb{R}^n$ and $f(x) = \iota(x)$ for all $x\in \fix(X, T)$.
We define a metric $D$ on it by 
\[ D(f, g) = \sup_{x\in X}\mathbf{d}(f(x), g(x)), \]
where $\mathbf{d}$ is the metric on $\lip_1(\mathbb{R}^n, I)$ defined in (\ref{eq: metric on the space of Lipschitz functions}).
It is easy to see that $\mathcal{C}$ is nonempty (Lemma \ref{lemma: C is not empty}) and a complete metric space.
We want to show that the set 
\[ \{f\in \mathcal{C}\mid \text{$f$ is a topological embedding}\} \]
contains a dense $G_\delta$-subset by utilizing the Baire category theorem.
This “Baire category approach” to the proof of Theorem \ref{theorem: main theorem}
is the same as in the case of $n=1$.
However the multidimensional case (i.e. the case of $n>1$) is more involved and exhibits a new difficulty.

For $x\in X$ we denote by $\mathfrak{g}_x$ the largest linear subspace $\mathfrak{g}\subset \mathbb{R}^n$ for which 
we have $T^t x = x$ for all $t\in \mathfrak{g}$.
In other words, $\mathfrak{g}_x$ is the connected component the of \textbf{stabilizer of $x$}, $\{t\in \mathbb{R}^n\mid T^t x = x\}$, containing the origin.
For $0\leq k \leq n$ we define 
$X_k = \{x\in X\mid \dim \mathfrak{g}_x\geq k\}$.
We have 
\begin{equation} \label{eq: filtration of X}
 X = X_0 \supset X_1 \supset X_2 \supset \dots \supset X_n = \fix(X, T).
\end{equation}
We also define $X_{n+1} = \emptyset$.

Set $Y = \lip_1(\mathbb{R}^n, I)$ and define 
\[  Y_k = \left\{\varphi \in Y\middle|\, 
\text{\parbox{12cm}{$\exists \mathfrak{g}\subset \mathbb{R}^n$, a linear subspace of dimension $\geq k$ such that  
$\varphi(t+u) = \varphi(t)$ for all $t\in \mathbb{R}^n$ and $u\in \mathfrak{g}.$}}\right\}. \]
Observe that 
$  Y_k \;=\; \bigl(\operatorname{Lip}_1(\mathbb{R}^n, I)\bigr)_k$,
as introduced in the previous paragraph in the context of a general $\mathbb{R}^n$-action on $X$. Thus we have $Y=Y_0\supset Y_1\supset Y_2\supset \dots \supset Y_n = \fix(Y, \sigma)$.
If $f\colon X\to Y$ is an equivariant continuous map then $f(X_k)\subset Y_k$ for all 
$0\leq k \leq n$. 
Therefore we need to respect the filtration structure (\ref{eq: filtration of X})
in all the constructions below.
In the case of $n=1$, this is not difficult because we have only two terms 
$X \supset \fix(X, T)$.
But it becomes more involved in the case of $n>1$.
(Notice also that we have only two terms $X \supset \fix(X, T)$ if we assume that 
$(X, T)$ is weakly locally free. Therefore the weak local freeness assumption makes the situation closer to the 
one-dimensional case.)

Let $0\leq k \leq n-1$.
For a closed subset $A\subset X_k$ 
with $A\cap X_{k+1}=\emptyset$ we set
\[  \mathcal{C}_k(A) = \{f\in \mathcal{C}\mid f(A) \cap Y_{k+1} = \emptyset\} .\]
We define $\tilde{X}_k = \{(x, y)\in (X_k\setminus X_{k+1})\times (X_k\setminus X_{k+1})\mid 
x\neq y \text{ and } \mathfrak{g}_x=\mathfrak{g}_y\}$.
For closed subsets $B, B^\prime\subset X_k$ with 
$B\cap B^\prime = B\cap X_{k+1} = B^\prime\cap X_{k+1}=\emptyset$, we define 
\[ \mathcal{C}_k(B, B^\prime) = 
   \{f\in \mathcal{C}\mid \forall (x,y) \in (B\times B^\prime)\cap \tilde{X}_k: f(x)\neq f(y)\}. \]
It is easy to see that the above $\mathcal{C}_k(A)$ and $\mathcal{C}_k(B, B^\prime)$
are open sets of $\mathcal{C}$.

The following two propositions are main steps towards the proof of Theorem 
\ref{theorem: main theorem}.

\begin{proposition} \label{proposition: C_k(A) is open and dense}
Let $0\leq k \leq n-1$ and $p\in X_k\setminus X_{k+1}$.
There exists a closed neighborhood $A$ of $p$ in $X_k$ with $A\cap X_{k+1}=\emptyset$
such that $\mathcal{C}_k(A)$ is open and dense in $\mathcal{C}$.
\end{proposition}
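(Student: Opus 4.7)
First, $Y_{k+1}\subset Y$ is closed: if $\varphi_m\in Y_{k+1}$ converges to $\varphi$ with each $\varphi_m$ translation-invariant under a $(k+1)$-dimensional subspace $\mathfrak{g}_m\subset\mathbb{R}^n$, then, passing to a subsequence along which $\mathfrak{g}_m\to\mathfrak{g}$ in the Grassmannian $\gr(k+1,\mathbb{R}^n)$, the limit $\varphi$ is invariant under the $(k+1)$-dimensional subspace $\mathfrak{g}$. Since $A$ is compact, this yields $\mathrm{dist}(f(A),Y_{k+1})>0$ for every $f\in\mathcal{C}_k(A)$, and this gap is stable under sufficiently small $D$-perturbations, which proves openness. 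For the choice of $A$, note that $x\mapsto\mathfrak{g}_x$ is upper semi-continuous (if $x_m\to x$ and $\mathfrak{g}_{x_m}\to\mathfrak{g}$ in the Grassmannian then $\mathfrak{g}\subset\mathfrak{g}_x$ by continuity of $T$), so $X_{k+1}$ is closed in $X$; on the open set $X_k\setminus X_{k+1}$, where $\dim\mathfrak{g}_x=k$ is constant, upper semi-continuity upgrades to continuity of $x\mapsto\mathfrak{g}_x$ into $\gr(k,\mathbb{R}^n)$. I accordingly take $A$ to be a closed neighborhood of $p$ in $X_k$ with $A\cap X_{k+1}=\emptyset$, small enough that a fixed linear complement $V:=\mathfrak{g}_p^\perp$ is transverse to $\mathfrak{g}_x$ for every $x\in A$.

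\textbf{Density: reduction to the observable.} Given $f_0\in\mathcal{C}$ and $\delta>0$, my goal is to produce $f\in\mathcal{C}_k(A)$ with $D(f,f_0)<\delta$. Setting $F_0(x):=f_0(x)(0)$, equivariance gives $f_0(x)(t)=F_0(T^t x)$, so $F_0\colon X\to I$ is continuous, 1-Lipschitz along every orbit, and equal to $\iota(\cdot)(0)$ on $\fix(X,T)$. Since $f(x)$ is automatically $\mathfrak{g}_x$-invariant, the condition $f(x)\in Y_{k+1}$ is equivalent to the existence of a 1-dimensional $L\subset\mathbb{R}^n$ with $L\cap\mathfrak{g}_x=\{0\}$ such that the corresponding observable $F$ is orbit-constant along the $L$-flow, i.e.\ $F(T^{t+sv}x)=F(T^t x)$ for all $t\in\mathbb{R}^n$, $s\in\mathbb{R}$, where $v$ spans $L$. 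The task thus reduces to perturbing $F_0$ to an observable $F$ with the same regularity and boundary values, uniformly close to $F_0$, for which no such pair $(x,L)$ with $x\in A$ exists.

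\textbf{Construction and main obstacle.} The plan is to build an auxiliary observable $G\colon X\to I$, continuous, 1-Lipschitz along orbits, equal to $\iota(\cdot)(0)$ on $\fix(X,T)$, with the property that for every $x\in A$ the restriction of $G$ to the orbit of $x$---viewed as a 1-Lipschitz function on $\mathbb{R}^n/\mathfrak{g}_x\cong V$---has trivial translation stabilizer. Near $p$, $G$ is obtained by pulling back a ``hat'' function $\eta\colon V\to I$ (1-Lipschitz with a unique maximum) via a continuous transverse coordinate built from a local slice through $p$ transverse to the $\mathfrak{g}_p$-flow; $G$ is then extended to $X$ by cut-off and orbit-wise McShane extension while preserving the 1-Lipschitz-along-orbits constraint. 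One then considers a multi-parameter family of perturbations $F_{\theta,\psi}:=(1-\theta)F_0+\theta G_\psi$, with $\theta\in(0,\delta)$ small and $\psi$ indexing the placement of the hat within $V$, and applies Baire's theorem in the parameter $(\theta,\psi)$---exhausting the open parameter region $\{(x,L)\in A\times\gr(1,\mathbb{R}^n):L\cap\mathfrak{g}_x=\{0\}\}$ by compact subsets on which the angle $\angle(L,\mathfrak{g}_x)$ is bounded below---to find a good parameter yielding $f\in\mathcal{C}_k(A)$ with $D(f,f_0)<\delta$. The main technical obstacle is the construction of $G$ together with the transverse coordinate: since $T$ is merely continuous and possibly far from locally free, no classical slice theorem is available, so the transverse coordinate must be crafted \emph{ad hoc} using the continuity of $x\mapsto\mathfrak{g}_x$ on $A$ and the uniform transversality of $V$; moreover, the cut-off extension to $X$ must be arranged so that $G$ remains globally 1-Lipschitz along every orbit, which is a delicate gluing problem across the taper zone separating the region supporting the hat from the fixed point set where $G$ is forced to equal $\iota(\cdot)(0)$.
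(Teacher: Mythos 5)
Your high-level plan agrees with the paper's in its broad strokes: reduce membership in $Y_{k+1}$ to translation-invariance along a line transverse to $\mathfrak g_x$, perturb locally to destroy such invariance near $p$, and then extend back to an equivariant Lipschitz-valued map on all of $X$. The openness argument via closedness of $Y_{k+1}$ (Grassmannian compactness) is fine, as is the equivalence of $f(x)\in Y_{k+1}$ with the existence of a line $L$ with $L\cap\mathfrak g_x=\{0\}$ along which $f(x)$ is invariant.

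However, the proposal leaves its two hardest steps unresolved and explicitly flags them as obstacles rather than solving them. First, the ``transverse coordinate'' built from a local slice through $p$: since the action is merely continuous and may be very far from locally free (that is the whole point of removing the weakly-locally-free hypothesis from Theorem \ref{theorem: weakly locally free}), there is no slice theorem to invoke, and a workable substitute must be constructed. The paper does this via the local section $E$ of Proposition \ref{prop: local section}, which is an integral-averaging construction giving injectivity of $(x,t)\mapsto T^tx$ on $B_\delta(\mathfrak h|_E)$ together with an open image; your ``ad hoc'' phrase hides precisely this non-trivial content. Second and more seriously, after perturbing on $X_k$ you must extend to an \emph{equivariant} map $X\to\lip_1(\mathbb R^n,I)$, and you correctly observe that this is ``a delicate gluing problem''. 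But you offer no mechanism: a naive Tietze extension of the observable $F$ only gives a map into $C(\mathbb R^n,I)$, not into $\lip_1(\mathbb R^n,I)$. This is exactly the new difficulty, absent in the $n=1$ and weakly-locally-free cases, for which the paper develops the ``Lipschitz filter'' $F\colon C(\mathbb R^n,I)\to\lip_{c'}(\mathbb R^n,I)$ of Proposition \ref{proposition: Lipschitz filter} and its consequences (Lemma \ref{lemma: Lipschitz extension}, Proposition \ref{prop: extension and filter}). Without a construction of such a filter, the proof does not close.

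A smaller concern: you propose a Baire-category argument in the parameter $(\theta,\psi)$ to handle the exhaustion of $\{(x,L):x\in A,\ L\cap\mathfrak g_x=\{0\}\}$. This is avoidable. The paper's local model (Proposition \ref{prop: local perturbation map, no translation invariance}) produces, \emph{deterministically and in one step}, a perturbation $G(\varphi)$ whose local Lipschitz constant at the origin is strictly larger than anywhere at distance $\sim 3r/4$, which kills translation-invariance along \emph{every} line simultaneously. Your parametric Baire argument would additionally require showing that the ``good for all $(x,L)\in K_i$'' parameter set is open and dense, a compactness argument you do not supply; it is not clear this extra machinery would even be shorter than a direct construction.
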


\begin{proposition} \label{proposition: C_k(B, B^prime) is open and dense}
Let $0\leq k \leq n-1$ and $(p,q) \in \tilde{X}_k$.
There exist closed neighborhoods $B$ and $B^\prime$ of $p$ and $q$ in $X_k$ respectively 
such that $B \cap X_{k+1}= B^\prime \cap X_{k+1} = B\cap B^\prime =\emptyset$
and that $\mathcal{C}_k(B,B^\prime)$ is open and dense in $\mathcal{C}$.
\end{proposition}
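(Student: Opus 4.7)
My plan is to follow the classical Baire-perturbation strategy from the one-dimensional theory, with the new difficulty that both $p$ and $q$ share the $k$-dimensional stabilizer direction $\mathfrak{g} := \mathfrak{g}_p = \mathfrak{g}_q$, so any equivariant map sends them to functions in the $\mathfrak{g}$-invariant sublocus of $\lip_1(\mathbb{R}^n,I)$. Fix an orthogonal complement $V$ of $\mathfrak{g}$. Since $p \neq q$ and $X_{k+1}$ is closed, closed neighborhoods $B, B'$ of $p, q$ in $X_k$ with $B \cap B' = B \cap X_{k+1} = B' \cap X_{k+1} = \emptyset$ exist, and their diameters will be shrunk as the perturbation below demands. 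Openness of $\mathcal{C}_k(B, B')$ is a compactness argument: given $f_n \to f$ with witnesses $(x_n, y_n) \in (B \times B') \cap \tilde{X}_k$ satisfying $f_n(x_n) = f_n(y_n)$, any subsequential limit $(x, y)$ satisfies $x \neq y$ (as $B \cap B' = \emptyset$) and $x, y \in X_k \setminus X_{k+1}$ (as $X_{k+1}$ is closed), and moreover $\mathfrak{g}_x = \mathfrak{g}_y$, because on $X_k \setminus X_{k+1}$ the map $z \mapsto \mathfrak{g}_z$ is continuous into the Grassmannian $\gr(k, \mathbb{R}^n)$: any limit of the $k$-dimensional subspaces $\mathfrak{g}_{z_n}$ is a $k$-dimensional subspace of the stabilizer of the limit point, hence coincides with $\mathfrak{g}_z$ when $\dim \mathfrak{g}_z = k$.

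For density, given $f \in \mathcal{C}$ and $\epsilon > 0$, it suffices to construct $g \in \mathcal{C}$ with $D(f, g) < \epsilon$ and $g(p) \neq g(q)$: by continuity of $g$ and the openness of non-equality in $\lip_1(\mathbb{R}^n, I)$, this inequality persists on an open neighborhood of $(p, q)$, and choosing $B, B'$ small enough at the outset ensures $g \in \mathcal{C}_k(B, B')$. Identifying $\mathcal{C}$ with the space of continuous $F \colon X \to [0, 1]$ that are $1$-Lipschitz along $\mathbb{R}^n$-orbits and agree with $\iota$ on $\fix(X, T)$ (via $F(x) = f(x)(0)$ and $f(x)(t) = F(T^t x)$), I plan to set $G := (1 - c) F + \phi$ for a small $c \in (0, 1)$ and an auxiliary continuous $\phi \colon X \to [0, c]$ satisfying (a) $\phi|_{\fix(X, T)} = c \iota$, (b) $\phi$ is $c$-Lipschitz along $\mathbb{R}^n$-orbits, and (c) $\phi(p) \neq \phi(q)$. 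Any such $\phi$ makes $G$ take values in $[0, 1]$, be $1$-Lipschitz along orbits, and agree with $\iota$ on $\fix(X, T)$, while $g(p)(0) - g(q)(0) = (1 - c)(F(p) - F(q)) + \phi(p) - \phi(q)$ can be arranged nonzero by tuning amplitudes.

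The function $\phi$ will be manufactured as the Lipschitz inf-convolution
\[ \phi(x) \;=\; \inf_{t \in \mathbb{R}^n} \bigl\{ \phi_0(T^t x) + c|t| \bigr\} \]
of a Urysohn-type bump $\phi_0$ arranged so that $\phi_0(p)$ is the maximal value, $\phi_0 = 0$ on $B'$, and $\phi_0 = c\iota$ on $\fix(X, T)$; this automatically makes $\phi$ a $c$-Lipschitz-along-orbits function in $[0, c]$ with $\phi|_{\fix(X, T)} = c \iota$. The main obstacle, and the key new step compared with the weakly locally free setting of \cite{GH}, is verifying condition (c): the inf-convolution can collapse $\phi_0(p)$ all the way down to zero if the orbit of $p$ quickly enters the zero-set of $\phi_0$, since $\phi(p) \leq \phi_0(T^t p) + c |t|$ for every $t$. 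To keep $\phi(p)$ bounded away from $\phi(q) = 0$, the support of $\phi_0$ must be a tube around the orbit of $p$ whose transverse radius and orbital length are calibrated to $c$ and to the distance in $\mathbb{R}^n$ from the origin to the coset $\{t \in \mathbb{R}^n : T^t p = q\}$, a quantity that is positive by $p \neq q$. The continuity of $z \mapsto \mathfrak{g}_z$ on $X_k \setminus X_{k+1}$ makes this tube uniformly well-defined over a small neighborhood of $p$, which is what produces the $B$ appearing in the proposition. Spelling this out while accounting for the different possible configurations of $p, q$ --- distinct orbits, same orbit, or $q$ in the orbit closure of $p$ but not on it --- is the heart of the argument.
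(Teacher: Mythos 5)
Your openness argument is fine, but the density argument has a genuine logical gap that undoes the whole structure of the proof, and it is precisely the gap that the paper's more elaborate ``multi-bump'' construction is designed to fill.

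You write that it suffices to produce, for each $f$ and $\varepsilon$, a map $g$ with $D(f,g)<\varepsilon$ and $g(p)\neq g(q)$, because ``this inequality persists on an open neighborhood of $(p,q)$, and choosing $B,B'$ small enough at the outset ensures $g\in\mathcal{C}_k(B,B')$.'' This is circular. The pair $(B,B')$ must be fixed once and for all, before the density argument begins (it must be the same pair for every $f$ and every $\varepsilon$, since the Baire category proof requires a fixed countable family $\{(B_{kj},B'_{kj})\}$ covering $\tilde{X}_k$ with each $\mathcal{C}_k(B_{kj},B'_{kj})$ dense). But the neighborhood of $(p,q)$ on which $g(x)\neq g(y)$ persists depends on $g$, and hence on $f$ and $\varepsilon$; there is no lower bound for this neighborhood that is uniform over all $f\in\mathcal{C}$. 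Concretely, if some other $f'\in\mathcal{C}$ already crushes large parts of $B$ and $B'$ together, a small perturbation separating the single pair $(p,q)$ need not separate the rest of $(B\times B')\cap\tilde{X}_k$. The single inf-convolution bump $\phi$ you build achieves $\phi(p)>\phi(q)=0$, but in the expression $g(x)(0)-g(y)(0)=(1-c)(F(x)-F(y))+(\phi(x)-\phi(y))$, the term $(1-c)(F(x)-F(y))$ is uncontrolled and can cancel $\phi(x)-\phi(y)$ at pairs $(x,y)$ other than $(p,q)$; and $g(x)=g(y)$ as functions requires checking all $t$, not just $t=0$, which your construction does not track.

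This is exactly why the paper does not perturb by a single bump. Instead, via Lemma \ref{lemma: multi bump function} and Proposition \ref{prop: local perturbation map}, it constructs a perturbation over a whole local section $E_1\cup E_2$ that \emph{encodes} a partition of unity of $E_1\cup E_2$ into the family of local Lipschitz constants of the perturbed functions. The resulting map $g$ has the property (Proposition \ref{prop: local perturbation map}(3)) that if two points $x,y\in E_1\cup E_2$ have $g(x),g(y)$ related by a small translation, then $d(x,y)$ is already small --- in particular $x$ and $y$ cannot lie in $E_1$ and $E_2$ respectively. This gives, uniformly over the fixed flow boxes $B_1,B_2$, a strictly positive separation $\mathbf{d}(g(x),g(y))>\varepsilon'$ for all $(x,y)\in B_1\times B_2$ with $\mathfrak{g}_x=\mathfrak{g}_y$, which is then preserved through the Lipschitz filter (Proposition \ref{prop: extension and filter}). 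Your plan, as written, lacks the mechanism that makes the separation uniform over $(B\times B')\cap\tilde{X}_k$, and ``spelling out the cases'' at the end will not supply it; a genuinely different construction is needed, or you need to reproduce the encoding idea.
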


Assuming these propositions (and the fact $\mathcal{C}\neq \emptyset$; Lemma \ref{lemma: C is not empty}),
we can prove Theorem \ref{theorem: main theorem} as follows.

\begin{proof}[Proof of Theorem \ref{theorem: main theorem}, 
assuming Propositions \ref{proposition: C_k(A) is open and dense} 
and \ref{proposition: C_k(B, B^prime) is open and dense}]
As we already remarked, it is enough to prove that,
given a topological embedding $\iota\colon X_n \to Y_n$, 
the set $\mathcal{C}$ defined above contains an embedding.

We notice that every separable metric space is a Lindel\"{o}f space (i.e. every open cover has a countable subcover).

Let $0\leq k \leq n-1$.
By Proposition \ref{proposition: C_k(A) is open and dense} and the Lindel\"{o}f property of $X_k\setminus X_{k+1}$, 
there exist closed subsets $A_{kj}$ of $X_k$
$(j\geq 1)$ such that $A_{kj}\cap X_{k+1}=\emptyset$, $X_k\setminus X_{k+1} = \bigcup_{j=1}^\infty A_{kj}$ and that 
$\mathcal{C}_k(A_{kj})$ is open and dense in $\mathcal{C}$.

By Proposition \ref{proposition: C_k(B, B^prime) is open and dense} and the Lindel\"{o}f property of $\tilde{X}_k$, 
there exist closed subsets $B_{kj}$ and $B^\prime_{kj}$ of $X_k$ 
$(j\geq 1)$ such that 
$B_{kj}\cap X_{k+1} = B^\prime_{kj}\cap X_{k+1} = B_{kj}\cap B^\prime_{kj} = \emptyset$, 
$\tilde{X}_k \subset \bigcup_{j=1}^\infty \left(B_{kj} \times B^\prime_{kj}\right)$ and that 
$\mathcal{C}_k(B_{kj}, B^\prime_{kj})$ is open and dense in $\mathcal{C}$. 

Since $\mathcal{C}$ is a (nonempty) complete metric space, the Baire category theorem implies that the set 
\[  \bigcap_{k=0}^{n-1} \bigcap_{j=1}^\infty \left(\mathcal{C}_k(A_{kj})\cap \mathcal{C}_k(B_{kj}, B^\prime_{kj})\right) \]
is dense in $\mathcal{C}$ (and hence nonempty).
Take a map $f$ in this set. We are going to prove that $f$ is an embedding.

Let $p, q \in X$ be any distinct two points.
We want to show $f(p) \neq f(q)$.
Suppose $f(p) = f(q)$.
If $p\in X_k\setminus X_{k+1}$ and $q\in X_{\ell}$ with $k<\ell$, then we can find $A_{kj}$ that contains $p$ and hence 
$f(p) \not\in Y_{k+1}$ since $f\in \mathcal{C}_k(A_{kj})$.
However we have $f(q) \in Y_\ell \subset Y_{k+1}$. Thus $f(p)\neq f(q)$.

Therefore we can assume that both $p$ and $q$ belong to the same $X_k\setminus X_{k+1}$ $(0\leq k \leq n)$.
If $k=n$ then $f(p) = \iota(p) \neq \iota(q) =f(q)$. Hence we assume $k\leq n-1$.

If $\mathfrak{g}_p \neq \mathfrak{g}_q$ then the map $f(p) = f(q)$ is invariant under $\mathfrak{g}_p + \mathfrak{g}_q$ and hence 
belongs to $Y_{k+1}$.
This again contradicts the fact that $p\in A_{kj}$ for some $j$ and $f\in \mathcal{C}_k(A_{kj})$.
Therefore we have $\mathfrak{g}_p = \mathfrak{g}_q$, namely $(p,q) \in \tilde{X}_k$.
Then $(p, q)\in B_{kj}\times B^\prime_{kj}$ for some $j$.
Since $f\in \mathcal{C}_k(B_{kj}, B^\prime_{kj})$, we have $f(p) \neq f(q)$.
\end{proof}

Next we explain the ideas of the proofs of Propositions \ref{proposition: C_k(A) is open and dense} and 
\ref{proposition: C_k(B, B^prime) is open and dense}.
Their proofs are similar. So we concentrate on the proof of Proposition \ref{proposition: C_k(A) is open and dense}.

The set $\mathcal{C}_k(A)$ is open for any closed subset $A\subset X_k$.
The problem is to prove that it is dense for appropriately chosen $A\subset X_k$.
For each $x\in X$, let $\mathfrak{h}_x \subset \mathbb{R}^n$ be the orthogonal complement of $\mathfrak{g}_x$
in $\mathbb{R}^n$.
We have the orthogonal decomposition $\mathbb{R}^n = \mathfrak{g}_x\oplus \mathfrak{h}_x$.
Set $\mathfrak{h} = \{(x, t) \in X\times \mathbb{R}^n\mid t \in \mathfrak{h}_x\}$.
Over each $X_k\setminus X_{k+1}$, $\mathfrak{h}$ is a vector bundle of rank $n-k$.
(That is, the space $\mathfrak{h}_x$ depends continuously on $x\in X_k\setminus X_{k+1}$.)
For $E\subset X_k\setminus X_{k+1}$ we denote 
$\mathfrak{h}|_E = \{(x, t) \in \mathfrak{h}\mid  x \in E\}$.
For $r>0$, we set $B_r(\mathfrak{h}|_E) = \{(x, t) \in \mathfrak{h}|_E\mid |t|\leq r\}$.

Let $p\in X_k\setminus X_{k+1}$ $(0\leq k\leq n-1)$.
We can prove that there exist a small number $r>0$ and a (small) closed subset $E$ of $X_k$ with 
$p\in E$ and $E\cap X_{k+1} = \emptyset$ such that the map 
\begin{equation} \label{eq: local section map}
  B_r(\mathfrak{h}|_E) \to X_k, \quad (x, t) \mapsto T^t x
\end{equation}
is injective and that its image contains an open neighborhood of $p$ in $X_k$.
We call such a set $E$ \textbf{local section}\footnote{Note that this does not coincide with the notion of a \textit{local section} introduced in \cite[Definition 2.8]{GH}. Indeed, although for $x \in E$ one has $\mathfrak{h}_x \cong \mathbb{R}^{\,n-k}$, in general there is no natural way to induce an associated (local) $\mathbb{R}^{\,n-k}$-action on $X_k$.
}.
Let $A$ be the image of the map (\ref{eq: local section map}).
We want to show that $\mathcal{C}_k(A)$ is dense in $\mathcal{C}$.
(Indeed, in the real proof, we have to define $A$ more carefully. But here we omit the details.) 

Let $f\in \mathcal{C}$ and $\varepsilon>0$ be arbitrary.
We try to perturb $f$ so that the resulting map belongs to $\mathcal{C}_k(A)$.
For $x\in X_k$, let $H(x)$ be the set of $t\in \mathfrak{h}_x$ for which we have $T^t x\in E$.\\
We perturb $f(x)$ locally near each $t \in H(x)$. (This procedure is somewhat analogous to the use of Rokhlin towers in ergodic theory and guarantees that the perturbation of the function $f$ is $\mathbb{R}^n$-equivariant - see heuristic discussion as part of the proof of \cite[Theorem 4.4]{GH}). Since membership in $Y_{k+1}$ is a global condition, it can be invalidated by suitable local perturbations and so  we can construct  
$
g : X_k \to \operatorname{Lip}_1(\mathbb{R}^n, I)
$ 
such that 
$
g(A) \cap Y_{k+1} = \emptyset
\,\,\text{and}\,\,
\sup_{x \in X_k} \mathbf{d}(f(x), g(x)) < \varepsilon.
$\\

Now a difficulty arises: The map $g$ has been defined on the subspace $X_k$ so far.
The problem is how to extend $g$ to the entire space $X$.
By using the Tietze extension theorem, it is easy to extend $g$ to an equivariant continuous map 
$g\colon X\to C(\mathbb{R}^n, I)$, where $C(\mathbb{R}^n, I)$ denotes the space of continuous maps from 
$\mathbb{R}^n$ to $I$.
However this is not sufficient for our purpose here.
We need a map $g\colon X\to \lip_1(\mathbb{R}^n, I)$.
This is a new difficulty which is completely absent in the case of $n=1$
(and the case of weakly locally free $(X, T)$).
We will overcome it by developing a certain “filter” which converts continuous functions into Lipschitz ones.
By applying this filter to $g\colon X\to C(\mathbb{R}^n, I)$, 
we get a map $h \in \mathcal{C}$ that satisfies $D(f, h)< \varepsilon$ and $h(A)\cap Y_{k+1} = \emptyset$.

This concludes the outline of the proof of Proposition \ref{proposition: C_k(A) is open and dense}.
The proof of Proposition \ref{proposition: C_k(B, B^prime) is open and dense} is similar.

\section{Lipschitz filter and applications} \label{section: Lipschitz filter and applications}

The purpose of this section is to construct a “filter” which converts continuous functions into Lipschitz ones.
As applications, we prove certain “Lipschitz extension theorems” which will be used in the proofs
of Propositions \ref{proposition: C_k(A) is open and dense} and \ref{proposition: C_k(B, B^prime) is open and dense}.
For $u\in \mathbb{R}^n$ and $r>0$ we set 
\[  B_r(u) = \{t\in \mathbb{R}^n\mid |t-u|\leq r\}. \]

\subsection{Preliminaries on Lipschitz functions} \label{subsection: preliminaries on Lipschitz functions}

Let $A$ be a closed subset of $\mathbb{R}^n$. Let $I=[0,1]$ be the unit interval.
We define the space $C(A, I)$ as the space of continuous maps from $A$ to $I$.
This is endowed with the compact-open topology.
For $c\geq 0$ we define a subspace $\lip_c(A, I) \subset C(A, I)$ as the space of 
maps $\varphi\colon A\to I$ that satisfies 
\[ |\varphi(t)-\varphi(u)|\leq c |t-u|, \quad (t, u\in A). \]

\begin{lemma}(Follows from the McShane--Whitney extension theorem (\cite{McSHANE1934ExtensionOR,whitney1934AnalyticEO}), see also \cite[Theorem 2.3]{gutev2020lipschitz}) \label{lemma: elementary Lipschitz extension}
For $\varphi\in \lip_c(A, I)$, set 
\[ \tilde{\varphi}(t) =  \min\{1, \inf\{\varphi(u)+ c|t-u|\mid u\in A\}\}, \quad (t\in \mathbb{R}^n). \]
Then  $\tilde{\varphi}\in \lip_c(\mathbb{R}^n, I)$ and $\tilde{\varphi}(t)  = \varphi(t)$ for $t\in A$.
Moreover the map 
\begin{equation} \label{eq: Lipschitz extension}
   \lip_c(A, I)\ni \varphi\mapsto \tilde{\varphi}\in \lip_c(\mathbb{R}^n, I) 
\end{equation}   
is continuous.
\end{lemma}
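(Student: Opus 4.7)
The plan is to verify in sequence the four claims: that $\tilde\varphi$ takes values in $I$, that $\tilde\varphi|_A = \varphi$, that $\tilde\varphi$ is $c$-Lipschitz on $\mathbb{R}^n$, and that $\varphi \mapsto \tilde\varphi$ is continuous with respect to the compact-open topologies. The first three are the content of the classical McShane--Whitney construction; the real work lies in the continuity assertion.

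For the pointwise claims I would argue as follows. Nonnegativity of $\varphi$ gives $\tilde\varphi \geq 0$, and the outer truncation gives $\tilde\varphi \leq 1$. For $t \in A$, testing the infimum at $u = t$ yields $\tilde\varphi(t) \leq \varphi(t)$, while the Lipschitz hypothesis on $A$ forces $\varphi(u) + c|t-u| \geq \varphi(t)$ for every $u \in A$; hence $\tilde\varphi(t) = \varphi(t)$. For the Lipschitz bound on $\mathbb{R}^n$, the triangle inequality $|t - u| \leq |t' - u| + |t - t'|$ gives $\inf_u\{\varphi(u) + c|t-u|\} \leq \inf_u\{\varphi(u) + c|t'-u|\} + c|t-t'|$; composing with the $1$-Lipschitz map $s \mapsto \min\{1, s\}$ and symmetrizing in $t, t'$ yields $|\tilde\varphi(t) - \tilde\varphi(t')| \leq c|t-t'|$.

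The main obstacle is the continuity statement. The decisive observation is a \emph{localization}: since $0 \leq \varphi \leq 1$, any $u \in A$ with $c|t-u| > 1$ yields $\varphi(u) + c|t-u| > 1$ and can be discarded after the outer truncation, so
\[
  \tilde\varphi(t) \;=\; \min\bigl\{1,\; \inf\{\varphi(u) + c|t-u| : u \in A \cap B_{1/c}(t)\}\bigr\},
\]
with the convention $\inf\emptyset = +\infty$. Fix a compact $K \subset \mathbb{R}^n$ and set $K' := \{u \in A : |t - u| \leq 2/c \text{ for some } t \in K\}$, a compact subset of $A$. For any pair $\varphi_1, \varphi_2 \in \lip_c(A, I)$, any $t \in K$, and any $\varepsilon \in (0, 1)$, if $\tilde\varphi_i(t) < 1$ one picks a near-minimizer $u^*_i \in A$ with $\varphi_i(u^*_i) + c|t - u^*_i| < \tilde\varphi_i(t) + \varepsilon$; the localization forces $u^*_i \in K'$. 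Using $u^*_i$ as a test point in the infimum defining $\tilde\varphi_j(t)$ for $j \neq i$, together with the trivial bound $\tilde\varphi_j(t) \leq 1$ when $\tilde\varphi_i(t) = 1$, yields $|\tilde\varphi_1(t) - \tilde\varphi_2(t)| \leq \sup_{u \in K'}|\varphi_1(u) - \varphi_2(u)| + \varepsilon$ for every $t \in K$; letting $\varepsilon \to 0$ gives the desired continuity. The main difficulty is therefore not conceptual but careful bookkeeping: one must keep track of the outer $\min$, the potential non-attainment of the infimum, and the case distinction according to whether $\tilde\varphi_i(t) = 1$.
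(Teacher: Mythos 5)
Your argument reproduces the paper's proof: the same triangle-inequality computation establishing the $c$-Lipschitz bound for $\psi(t) = \inf_{u \in A}\{\varphi(u) + c|t-u|\}$, the same observation that testing at $u=t$ together with the Lipschitz hypothesis on $A$ forces $\psi|_A = \varphi$, and the same use of the $1$-Lipschitz truncation $\min\{1,\cdot\}$. The only difference is that the paper disposes of the continuity claim with the single remark that the map is ``obviously continuous,'' whereas you correctly supply the localization argument (after truncation at $1$, only those $u \in A$ with $c|t-u| \leq 1$ contribute to the infimum) that reduces the infimum to a compact subset $K' \subset A$ and thereby converts uniform convergence of $\varphi_n$ on compacta in $A$ into uniform convergence of $\tilde\varphi_n$ on compacta in $\mathbb{R}^n$ — a welcome elaboration, though the casework around near-minimizers and the value $\tilde\varphi_i(t)=1$ could be streamlined by working directly with $\psi_i$ and applying the truncation at the very end.
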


\begin{proof}
This is a well-known fact \cite[Chapter 7, p. 100]{Mattila}.
We briefly explain the proof for convenience of readers.
Set 
\[ \psi(t) = \inf\{\varphi(u)+ c|t-u|\mid u\in A\}, \quad (t\in \mathbb{R}^n). \]
Let $t, t^\prime\in \mathbb{R}^n$.
For any $u\in A$ we have 
\[  \psi(t^\prime) \leq \varphi(u) + c|t^\prime-u| \leq \varphi(u) + c|t-u| + c|t^\prime-t|. \]
Taking the infimum over $u\in A$ we get 
\[ \psi(t^\prime) \leq \psi(t) + c|t^\prime-t|. \]
Similarly we have $\psi(t) \leq \psi(t^\prime) + c|t^\prime-t|$.
Hence $|\psi(t)-\psi(t^\prime)|\leq c |t^\prime-t|$.
If $t\in A$ then for any $u\in A$
\[ \varphi(t) -\varphi(u) \leq c |t-u|. \]
Then $\varphi(t) \leq \inf\{\varphi(u) + c|t-u|\mid u\in A\} = \psi(t)$. Therefore $\varphi(t) = \psi(t)$ for $t\in A$.

Since $\tilde{\varphi}(t) = \min\{1, \psi(t)\}$, we also have $\tilde{\varphi}\in \lip_c(\mathbb{R}^n, I)$ and 
$\tilde{\varphi}(t) = \varphi(t)$ for all $t\in A$.

The map (\ref{eq: Lipschitz extension}) is obviously continuous.
\end{proof}

We also need the following elementary lemma.

\begin{lemma} \label{lemma: convex combination}
Let $c>0$, $x\in \mathbb{R}$ and $-c<y<c$.
(We consider that $c$ is fixed and $x,y$ are variables.)
Set 
\[  S = \{s\in [0,1]\mid |(1-s)x+sy|\leq c\}, \quad \mathfrak{s}(x,y) = \inf S. \]
Then $S$ has the form $S = [\mathfrak{s}(x, y), 1]$ and $\mathfrak{s}(x,y)$ is continuous in $x$ and $y$.
\end{lemma}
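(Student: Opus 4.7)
The plan is to exploit that $\ell(s) := (1-s)x + sy = x + s(y-x)$ is affine in $s$, so the map $s \mapsto |\ell(s)|$ is convex and continuous. Hence $\{s \in \mathbb{R} : |\ell(s)| \leq c\}$ is the preimage of the closed convex set $[-c,c]$ under a continuous affine map and is therefore a closed (possibly empty, possibly unbounded) interval of $\mathbb{R}$. Intersecting with $[0,1]$ produces a closed subinterval of $[0,1]$; since $\ell(1) = y$ and $|y|<c$ we have $1 \in S$, so $S$ is nonempty and hence of the form $[\mathfrak{s}(x,y), 1]$. This disposes of the first assertion.

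For the continuity of $\mathfrak{s}$ I would derive an explicit formula by case analysis. If $|x| \leq c$ then $0 \in S$, and the convexity of $S$ together with $1 \in S$ forces $\mathfrak{s}(x,y) = 0$. If $|x| > c$ then $0 \notin S$, so $\mathfrak{s}(x,y) > 0$ is characterised by the boundary equation $|\ell(\mathfrak{s}(x,y))| = c$. Since $|y| < c < |x|$ implies $x \neq y$, solving this linear equation in both subcases $x > c$ and $x < -c$ yields the uniform expression $(|x|-c)/|x-y|$. Combining the two regimes gives
\[
  \mathfrak{s}(x,y) \;=\; \max\!\left(0,\; \frac{|x|-c}{|x-y|}\right),
\]
with the quotient understood only where $|x|>c$ (in which case $x \neq y$ automatically).

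Continuity is then a matter of inspecting each open region and the transition between them. On the open set $\{|x|<c\}$ the function is identically zero, and on the open set $\{|x|>c,\, |y|<c\}$ the formula above is continuous because $|x-y| \geq |x| - |y| > c - |y| > 0$. The only delicate point is matching the two pieces along the boundary $|x|=c$, but there the hypothesis $|y|<c$ again gives $|x-y| \geq c - |y| > 0$, so the numerator $|x|-c$ vanishes while the denominator stays uniformly bounded below in a neighbourhood, forcing the fraction to tend to $0$ from the $|x|>c$ side. This boundary gluing is the only non-routine point; everything else is straightforward bookkeeping for an affine function on the real line.
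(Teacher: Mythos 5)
Your proof is correct and takes essentially the same route as the paper: derive an explicit formula for $\mathfrak{s}(x,y)$ by case analysis on whether $|x|\le c$ or $|x|>c$, and read off continuity from the formula. The paper simply records the three-case piecewise description $S=[-\frac{c+x}{y-x},1]$, $[0,1]$, $[\frac{x-c}{x-y},1]$ and leaves the continuity check to the reader; you go a little further by justifying the interval structure of $S$ via convexity of $s\mapsto|(1-s)x+sy|$, folding the two nontrivial branches into the single expression $\max\bigl(0,\frac{|x|-c}{|x-y|}\bigr)$ (which one checks agrees with the paper's formulas after unwinding the absolute values), and spelling out that the denominator $|x-y|\ge c-|y|>0$ stays bounded away from zero near the gluing locus $|x|=c$. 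This is the same argument, presented with slightly more care at the boundary.
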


\begin{proof}
Since $-c<y<c$, we have $1\in S$.
If $x\leq -c$ then $S= \left[-\frac{c+x}{y-x}, 1\right]$.
If $-c\leq x \leq c$ then $S = [0,1]$.
If $x\geq c$ then $S = \left[\frac{x-c}{x-y}, 1\right]$.
\end{proof}

\begin{corollary} \label{cor: convex combination, multiple} 
Let $c_k>0$, $x_k\in \mathbb{R}$ and $-c_k<y_k<c_k$ $(1\leq k \leq m)$.
(We consider that $c_k$ are fixed and $x_k$ and $y_k$ are variables.)
Set 
\[ \mathfrak{s}(x_1, \dots, x_m, y_1, \dots, y_m) = \inf \bigcap_{k=1}^m \{s\in [0,1]\mid |(1-s)x_k+s y_k|\leq c_k\}. \]
Then $\mathfrak{s}(x_1, \dots, x_m, y_1, \dots, y_m)$ is continuous in $x_1, \dots, x_m, y_1, \dots, y_m$.
\end{corollary}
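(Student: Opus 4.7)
The plan is to reduce the corollary to Lemma \ref{lemma: convex combination} applied coordinate by coordinate. For each $k \in \{1, \dots, m\}$, Lemma \ref{lemma: convex combination} provides a continuous function $\mathfrak{s}_k\colon \mathbb{R}\times (-c_k, c_k)\to [0,1]$ such that
\[ S_k(x_k, y_k) := \{s\in [0,1]\mid |(1-s)x_k + s y_k|\leq c_k\} = [\mathfrak{s}_k(x_k, y_k), 1]. \]
In particular each set $S_k(x_k, y_k)$ is a closed subinterval of $[0,1]$ with right endpoint $1$.

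Next I would observe that the intersection of finitely many closed subintervals of $[0,1]$ all of which share the right endpoint $1$ is again a closed subinterval with right endpoint $1$; more precisely,
\[ \bigcap_{k=1}^m [\mathfrak{s}_k(x_k, y_k), 1] \;=\; \left[\max_{1\leq k\leq m} \mathfrak{s}_k(x_k, y_k),\; 1\right]. \]
Therefore
\[ \mathfrak{s}(x_1, \dots, x_m, y_1, \dots, y_m) \;=\; \max_{1\leq k\leq m}\mathfrak{s}_k(x_k, y_k). \]

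Finally, the pointwise maximum of finitely many continuous functions is continuous, which yields the desired conclusion. There is no real obstacle here; the only thing to be careful about is recording that $1 \in S_k$ for every $k$ (which follows from the hypothesis $-c_k<y_k<c_k$, already used in Lemma \ref{lemma: convex combination}), so that the intersection is nonempty and its infimum is indeed attained as a maximum of the left endpoints rather than being forced up to $1$ by emptiness considerations.
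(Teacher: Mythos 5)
Your proof is correct and follows the same approach as the paper: identify each $S_k(x_k,y_k)$ as the interval $[\mathfrak{s}_k(x_k,y_k),1]$ via Lemma \ref{lemma: convex combination}, observe that $\mathfrak{s} = \max_k \mathfrak{s}_k$, and conclude by continuity of a finite maximum of continuous functions. The paper's proof is a terser version of exactly this argument.
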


\begin{proof}
Set $\mathfrak{s}_k(x_k, y_k) := \inf\{s\in [0,1]\mid |(1-s)x_k+s y_k|\leq c_k\}$.
We have 
\[ \mathfrak{s}(x_1, \dots, x_m, y_1, \dots, y_m) = \max\{\mathfrak{s}_1(x_1, y_1), \dots, \mathfrak{s}_m(x_m, y_m)\}. \]
Since each $\mathfrak{s}_k(x_k, y_k)$ is continuous, the function $\mathfrak{s}(x_1, \dots, x_m, y_1, \dots, y_m)$ is 
also continuous.
\end{proof}

\subsection{Construction of a filter} \label{subsection: construction of a filter}

Let $M$ be a positive number and set $\mathbb{T}_M = \mathbb{R}^n/M\mathbb{Z}^n$ (the torus).
Let $\pi_M\colon \mathbb{R}^n\to \mathbb{T}_M$ be the natural projection.

We first consider a filter for “discrete signals”:
\begin{lemma} \label{lemma: discrete filter}
Let $c$ and $c^\prime$ be positive numbers with $\frac{1}{M}<c<c^\prime$.
Let $\Lambda\subset \mathbb{T}_M$ be a finite subset and set $\Gamma = \pi_M^{-1}(\Lambda) \subset \mathbb{R}^n$.
There exist a positive number $L$ and 
a $M\mathbb{Z}^n$-equivariant continuous map $G\colon C(\Gamma, I) \to \lip_{c^\prime}(\Gamma, I)$ (where 
$M\mathbb{Z}^n = \{M t\mid t\in \mathbb{Z}^n\}$ naturally acts on $C(\Gamma, I) $ and $\lip_{c^\prime}(\Gamma, I)$ 
by the shift) such that the following two conditions hold.
  \begin{enumerate}
   \item  We have $G(\varphi) = \varphi$ for all $\varphi\in \lip_c(\Gamma, I)$.
   \item  Let $\varphi \in C(\Gamma, I)$ and $\xi \in \Gamma$. If $\varphi(\lambda) = 0$ for all 
   $\lambda \in B_L(\xi)\cap \Gamma$ then $G(\varphi)(\xi) = 0$. 
  \end{enumerate}
\end{lemma}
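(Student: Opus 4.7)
The approach is to adapt the McShane--Whitney infimal convolution of Lemma \ref{lemma: elementary Lipschitz extension} to the discrete set $\Gamma$. I define
\[
G(\varphi)(\xi) \;=\; \min\Bigl(1,\; \inf_{\lambda\in\Gamma}\bigl[\varphi(\lambda)+c'\lvert\xi-\lambda\rvert\bigr]\Bigr),\qquad \xi\in\Gamma,
\]
and choose $L = 1/c'$. Four things must then be verified: $G$ lands in $\lip_{c'}(\Gamma,I)$; $G$ is $M\mathbb{Z}^n$-equivariant; $G$ is continuous; and properties (1) and (2) hold.

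The Lipschitz property reproduces the argument of Lemma \ref{lemma: elementary Lipschitz extension}: each map $\xi\mapsto\varphi(\lambda)+c'|\xi-\lambda|$ is $c'$-Lipschitz, hence so is their infimum, and the outer $\min(1,\cdot)$ preserves this while clamping values into $[0,1]$. Equivariance follows from the translation invariance of the Euclidean metric and the $M\mathbb{Z}^n$-invariance of $\Gamma$: for $u\in M\mathbb{Z}^n$, the substitution $\mu=\lambda+u$ in the infimum yields $G(\varphi(\cdot+u))(\xi)=G(\varphi)(\xi+u)$. Property (1) is the chain
\[
\varphi(\lambda)+c'|\xi-\lambda|\;\ge\;\varphi(\xi)+(c'-c)|\xi-\lambda|\;\ge\;\varphi(\xi),
\]
valid for $\varphi\in\lip_c(\Gamma,I)$ by the $c$-Lipschitz estimate on $\varphi$, with equality at $\lambda=\xi$; so $G(\varphi)(\xi)=\varphi(\xi)$. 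Property (2) is immediate: since $\xi\in B_L(\xi)\cap\Gamma$, the vanishing hypothesis gives $\varphi(\xi)=0$, and evaluating the infimand at $\lambda=\xi$ yields $G(\varphi)(\xi)\le 0$, hence equals $0$.

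The main technical subtlety is continuity of $G$. Since $\Gamma=\pi_M^{-1}(\Lambda)$ is a finite union of cosets of $M\mathbb{Z}^n$, it is discrete and closed in $\mathbb{R}^n$; thus compact subsets of $\Gamma$ are finite, and the compact-open topology on $C(\Gamma,I)$ coincides with the product topology on $I^\Gamma$. The key observation is that for any $\xi\in\Gamma$ and any $\lambda\in\Gamma$ with $|\xi-\lambda|>1/c'$, the quantity $\varphi(\lambda)+c'|\xi-\lambda|$ strictly exceeds $1$ for every $\varphi\in C(\Gamma,I)$, so such $\lambda$ cannot affect the value of $G(\varphi)(\xi)$ after the outer clamp. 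Consequently the infimum reduces to a minimum over the \emph{finite} set $\Gamma\cap B_{1/c'}(\xi)$, making $G(\varphi)(\xi)$ a continuous function of the finitely many coordinates $\{\varphi(\lambda)\}_{\lambda\in\Gamma\cap B_{1/c'}(\xi)}$. This gives pointwise continuity, which combined with the local finiteness of $\Gamma$ is exactly continuity in the compact-open topologies. The choice $L=1/c'$ carries the additional interpretation that $G(\varphi)(\xi)$ depends only on $\varphi|_{B_L(\xi)\cap\Gamma}$, of which property (2) is a special consequence; any larger $L$ would work equally well.
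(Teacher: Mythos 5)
Your proof is correct, and it takes a genuinely different --- and considerably simpler --- route than the paper's. Where the paper builds $G$ by an induction over the cosets $\Gamma_1\subset\Gamma_2\subset\dots\subset\Gamma_N$, at each stage solving a small optimization problem (the $\mathfrak{s}(t)$ of Corollary~\ref{cor: convex combination, multiple}) to interpolate between the raw value $\varphi(t)$ and a McShane extension, you apply a single McShane infimal convolution on all of $\Gamma$ at once and let the strict gap $c<c'$ force $G(\varphi)=\varphi$ on $\lip_c(\Gamma,I)$. All four verifications you give are sound: the $c'$-Lipschitz bound because an infimum of $c'$-Lipschitz functions is $c'$-Lipschitz; equivariance by substitution; property~(1) from the chain of inequalities with equality only at $\lambda=\xi$; and property~(2) since the $\lambda=\xi$ term is zero while the infimum is nonnegative. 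Your continuity argument is also correct --- for $|\xi-\lambda|>1/c'$ the term $\varphi(\lambda)+c'|\xi-\lambda|$ exceeds $1\geq\varphi(\xi)\geq G(\varphi)(\xi)$, so the infimum is achieved on the finite set $\Gamma\cap B_{1/c'}(\xi)$, and $G(\varphi)(\xi)$ is a continuous function of finitely many coordinates of $\varphi$. (Indeed the outer $\min(1,\cdot)$ in your formula is never active, since the $\lambda=\xi$ term already bounds the infimum by $\varphi(\xi)\leq 1$.) One bonus of your approach: the hypothesis $1/M<c$, which the paper needs to guarantee that the base step $G_1(\varphi)=\varphi|_{\Gamma_1}$ lands in $\lip_{c_1}(\Gamma_1,I)$, plays no role in your argument, so you in effect prove a slightly more general statement. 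The paper's more elaborate construction appears to be aiming at a stronger (unstated) closeness property $G(\varphi)\approx\varphi$ for general $\varphi$, but nothing of that sort is required by the lemma or by its downstream use in Proposition~\ref{proposition: Lipschitz filter}, so your shortcut is fully adequate.
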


\begin{proof}
Let $\Lambda = \{\lambda_1, \dots, \lambda_N\}$ and set 
$\Gamma_k = \pi_M^{-1} \{\lambda_1, \lambda_2, \dots, \lambda_k\}$ for $k=1,2,\dots, N$.
Take an ascending chain of positive numbers $c=c_1<c_2<\dots<c_N = c^\prime$.
(Any choice will work.)
We will inductively (on $k=1,2,\dots, N$) define a $M\mathbb{Z}^n$-equivariant continuous map 
$G_k \colon C(\Gamma, I) \to \lip_{c_k}(\Gamma_k, I)$ that satisfies the following two conditions.
  \begin{enumerate}
     \item[(i)] If $\varphi\in \lip_{c}(\Gamma, I)$ then $G_k(\varphi)  = \varphi|_{\Gamma_k}$ (the restriction of $\varphi$ to $\Gamma_k$).
     \item[(ii)] There exists $L_k\geq 0$ such that if $\varphi\in C(\Gamma, I)$ vanishes on $B_{L_k}(\xi)\cap \Gamma_k$ for some
     $\xi \in \Gamma_k$ then $G_k(\varphi)(\xi)  = 0$.  
  \end{enumerate}

Let $\varphi\in C(\Gamma, I)$.
We define $G_1(\varphi) = \varphi|_{\Gamma_1}$ (the restriction of $\varphi$ to $\Gamma_1 = \pi^{-1}(\lambda_1)$).
Since $|t-u|\geq M > \frac{1}{c}$ for distinct $t,u \in \Gamma_1$, we have $G_1(\varphi)\in \lip_{c_1}(\Gamma_1, I)$ and it satisfies the 
induction hypotheses with $L_1 = 0$.

Suppose we have constructed $G_k(\varphi) \in \lip_{c_k}(\Gamma_k, I)$ for some $k<N$. 
We are going to construct $G_{k+1}(\varphi) \in \lip_{c_{k+1}}(\Gamma_{k+1}, I)$.
We define $\psi\in \lip_{c_k}(\Gamma_{k+1}, I)$ (by using Lemma \ref{lemma: elementary Lipschitz extension}) by 
\[ \psi(t) = \min\left\{1, \inf\{ G_k(\varphi)(u) + c_k|t-u|\mid u\in \Gamma_k\}\right\} \quad 
    (t\in \Gamma_{k+1}). \]
We have $\psi|_{\Gamma_k} = G_k(\varphi)$.
For each $t\in \Gamma_{k+1}\setminus \Gamma_k = \pi^{-1}(\lambda_{k+1})$, we define
$\mathfrak{s}(t) \in [0, 1]$ as the minimum number of $s\in [0, 1]$ satisfying the following condition:
\begin{equation} \label{eq: choice of s(t)}
  \forall u\in \Gamma_k: \quad \left|(1-s)\varphi(t)+ s\psi(t)-G_k(\varphi)(u)\right|\leq c_{k+1} |t-u|. 
\end{equation}
This condition seems to involve infinitely many inequalities.
However the relevant $u\in \Gamma_k$ are only those satisfying $|t-u|\leq \frac{1}{c_{k+1}}$.
(If $|t-u| > \frac{1}{c_{k+1}}$ then the right-hand side is greater than one.)
Therefore (\ref{eq: choice of s(t)}) essentially contains only finitely many inequalities.

The condition (\ref{eq: choice of s(t)}) is equivalent to
\[ \forall u\in \Gamma_k: \quad 
   \left|(1-s)\left(\varphi(t)- G_k(\varphi)(u)\right)+ s\left(\psi(t)-G_k(\varphi)(u)\right)\right|\leq c_{k+1} |t-u|.  \]
Moreover we have 
\[ \left|\psi(t)-G_k(\varphi)(u)\right| = \left|\psi(t) - \psi(u)\right| \leq c_k |t-u| < c_{k+1} |t-u|. \]
Then $\mathfrak{s}(t)$ is continuous in $\varphi \in C(\Gamma, I)$ by Corollary \ref{cor: convex combination, multiple}.
Now we define $G_{k+1}(\varphi) \colon \Gamma_{k+1} \to I$ by 
\[ G_{k+1}(\varphi)(t) = \begin{cases}  
                                 G_k(\varphi)(t)  & (t\in \Gamma_k) \\
                                 (1-\mathfrak{s}(t))\varphi(t) + \mathfrak{s}(t) \psi(t) & (t\in \Gamma_{k+1}\setminus \Gamma_k)
                                 \end{cases}. \] 

For $t\in \Gamma_{k+1}\setminus \Gamma_k$ and $u\in \Gamma_k$ we have 
$|G_{k+1}(\varphi)(t) - G_{k+1}(\varphi)(u)| = |G_{k+1}(\varphi)(t) - G_k(\varphi)(u)| \leq c_{k+1}|t-u|$ by (\ref{eq: choice of s(t)}).
For $t, u \in \Gamma_k$ we have 
$|G_{k+1}(\varphi)(t) - G_{k+1}(\varphi)(u)| = |G_{k}(\varphi)(t) - G_{k}(\varphi)(u)| \leq c_k |t-u| \leq c_{k+1}|t-u|$.
For distinct $t, u \in \Gamma_{k+1}\setminus \Gamma_k$ we have $|t-u| \geq M > \frac{1}{c_1}$ and hence 
$|G_{k+1}(\varphi)(t)-G_{k+1}(\varphi)(u)| \leq 1 \leq c_{k+1}|t-u|$.
Therefore we have $G_{k+1}(\varphi) \in \lip_{c_{k+1}}(\Gamma_{k+1}, I)$.

We are going to check that $G_{k+1}(\varphi)$ satisfies the conditions (i) and (ii).
If $\varphi\in \lip_c(\Gamma, I)$ then $G_k(\varphi) = \varphi|_{\Gamma_k}$ and 
$\mathfrak{s}(t) = 0$ for all $t\in \Gamma_{k+1}\setminus \Gamma_k$.
Hence $G_{k+1}(\varphi) = \varphi|_{\Gamma_{k+1}}$.
This has shown (i).
Next we consider (ii).
Set $L_{k+1} = L_k + \frac{1}{c_{k+1}}$.
Suppose that $\varphi\in C(\Gamma, I)$ vanishes on $B_{L_{k+1}}(\xi)\cap \Gamma_{k+1}$ for some $\xi \in \Gamma_{k+1}$.
Then for any $\eta \in B_{1/{c_{k+1}}}(\xi)\cap \Gamma_k$ we have 
$\varphi(t) = 0$ for $t\in B_{L_k}(\eta)\cap \Gamma_k$.
By the induction hypothesis, we have $G_k(\varphi)(\eta) = 0$ for $\eta \in B_{1/{c_{k+1}}}(\xi)\cap \Gamma_k$.
Together with $\varphi(\xi) = 0$, this implies $\mathfrak{s}(\xi)  = 0$ and hence $G_{k+1}(\varphi)(\xi)=0$

Now we have completed the induction process. We finally set $G(\varphi) := G_N(\varphi)$ and $L:=L_N$.
\end{proof}

The next proposition provides us a convenient filter that converts continuous functions into Lipschitz ones.
This is the main result of this subsection.

\begin{proposition} \label{proposition: Lipschitz filter}
Let $\varepsilon>0$ and $0<c<c^\prime$.
There exist $R>0$ and an $\mathbb{R}^n$-equivariant continuous map 
$F\colon C(\mathbb{R}^n, I)\to \lip_{c^\prime}(\mathbb{R}^n, I)$ satisfying the following three conditions.
  \begin{enumerate}
   \item For $\varphi\in \lip_c(\mathbb{R}^n, I)$ we have $\left|F(\varphi)(t) - \varphi(t)\right| < \varepsilon$ for all $t\in \mathbb{R}^n$.
   \item If $\varphi \in C(\mathbb{R}^n, I)$ is a constant function, then $F(\varphi) = \varphi$.
   \item If $\varphi \in C(\mathbb{R}^n, I)$ vanishes on $B_R(\xi)$ for some $\xi \in \mathbb{R}^n$, then $F(\varphi)(\xi)  = 0$.  
  \end{enumerate}
\end{proposition}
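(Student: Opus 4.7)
The plan is to bypass the machinery of Lemma~\ref{lemma: discrete filter} entirely and define $F$ as the \emph{inf-convolution with the $c'$-cone}:
\[
F(\varphi)(t) \;:=\; \inf_{u \in \mathbb{R}^n} \bigl[\varphi(u) + c' |t - u| \bigr],
\qquad \varphi \in C(\mathbb{R}^n, I),\ t \in \mathbb{R}^n.
\]
This is the McShane--Whitney formula of Lemma~\ref{lemma: elementary Lipschitz extension} with $A = \mathbb{R}^n$, reinterpreted as a Lipschitz regularization of a (possibly non-Lipschitz) continuous input rather than as an extension; the outer $\min\{1, \cdot\}$ of that lemma is unnecessary because the choice $u = t$ already gives $F(\varphi)(t) \leq \varphi(t) \leq 1$.

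First I would verify that $F$ maps into $\lip_{c'}(\mathbb{R}^n, I)$ and is continuous for the compact-open topology. The bounds $0 \leq F(\varphi) \leq 1$ and the $c'$-Lipschitz estimate are one-line calculations copied from Lemma~\ref{lemma: elementary Lipschitz extension}. For continuity I would observe that the infimum is effectively taken over $u$ with $|u - t| \leq 1/c'$ (outside this range $c'|t - u| > 1 \geq F(\varphi)(t)$), so that for $t$ in a compact set $K$ the pointwise difference $|F(\varphi)(t) - F(\psi)(t)|$ is dominated by $\sup\{|\varphi(u) - \psi(u)| : \operatorname{dist}(u, K) \leq 1/c'\}$, a quantity that vanishes under convergence in the compact-open topology. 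Equivariance $F \circ \sigma^a = \sigma^a \circ F$ is immediate from the change of variable $v = u + a$ in the infimum, which is legitimate because $u$ ranges over all of $\mathbb{R}^n$.

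Next I would check the three numbered conditions, which essentially come for free from this choice. Property~(2) is the case $\varphi \equiv \alpha$, where the infimum is attained at $u = t$ and equals $\alpha$. Property~(3) holds for \emph{any} $R > 0$ (say $R = 1$): if $\varphi(\xi) = 0$ then $u = \xi$ gives $F(\varphi)(\xi) \leq 0$, forcing $F(\varphi)(\xi) = 0$. For property~(1) the strict inequality $c < c'$ plays the decisive role: for $\varphi \in \lip_c(\mathbb{R}^n, I)$ and any $u \in \mathbb{R}^n$,
\[
\varphi(u) + c'|t - u| \;\geq\; \varphi(t) + (c' - c)|t - u| \;\geq\; \varphi(t),
\]
so $F(\varphi) \geq \varphi$, and combined with $F(\varphi) \leq \varphi$ from $u = t$ this in fact yields the identity $F(\varphi) = \varphi$---stronger than the asserted $\varepsilon$-bound, and independent of $\varepsilon$.

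In this approach there is honestly no serious obstacle: both the $\varepsilon$ and the positivity of $R$ in the statement turn out to be cosmetic. The apparent scaffolding of Section~\ref{section: Lipschitz filter and applications}---building $F$ from the discrete filter of Lemma~\ref{lemma: discrete filter} on a lattice $\Gamma = \pi_M^{-1}(\Lambda)$, extending Lipschitz-ly, and then symmetrizing over $\mathbb{T}_M$---is an alternative route that also succeeds, and on that route the only delicate point is choosing an \emph{infimum} over $\mathbb{T}_M$-translates rather than an \emph{average}: an average would violate property~(3) because each individual Lipschitz extension at $\xi - u$ is only $O(\delta)$ and generically non-zero, whereas an infimum succeeds because one can always pick $u$ so that $\xi - u$ lands on the lattice $\Gamma$, where Lemma~\ref{lemma: discrete filter}(2) already forces the vanishing.
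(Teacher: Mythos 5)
Your proof is correct, and it is a genuine simplification. The paper's route first constructs a discrete filter on a lattice $\Gamma = \pi_M^{-1}(\Lambda)$ (Lemma~\ref{lemma: discrete filter}), extends Lipschitz-ly to get a map $F_0$ that is only $M\mathbb{Z}^n$-equivariant, and then restores $\mathbb{R}^n$-equivariance by averaging $F_0(\sigma^u\varphi)(t-u)$ over the fundamental domain $[0,M)^n$ and subtracting a normalization constant $A = \frac{1}{M^n}\int_{[0,M)^n}\psi\,du$; the $R$ in property~(3) must then be large enough to drive the integrand down to its generic floor $\psi$ so that the average cancels against $A$. Your inf-convolution $F(\varphi)(t)=\inf_{u\in\mathbb{R}^n}[\varphi(u)+c'|t-u|]$ sidesteps all of this: the index set is already $\mathbb{R}^n$, so equivariance is a one-line change of variables; the gap $c<c'$ forces $F$ to be the identity on $\lip_c$ (property~(1) with $\varepsilon=0$); and the bound $F(\varphi)(\xi)\le\varphi(\xi)$ makes property~(3) pointwise, so any $R>0$ works. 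You obtain all three conclusions in strictly stronger form, the downstream applications in Lemmas~\ref{lemma: filtering cutoff} and \ref{lemma: Lipschitz extension} go through unchanged, and Lemma~\ref{lemma: discrete filter} becomes unnecessary for this proposition. One small correction to your closing speculation: the paper does \emph{not} replace the average over $\mathbb{T}_M$ by an infimum. It keeps the average, and property~(3) still holds because the two-sided bound $\psi\le F_0(\varphi)\le 1+\psi$ combined with the subtraction of $A$ pins the integrand at $\xi-u$ to exactly $\psi(\xi-u)$ once $\varphi$ vanishes on $B_R(\xi)$, so the integral cancels against $A$; but this concerns the paper's route, not yours, which is sound as written.
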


\begin{proof}
Take a positive number $M$ with $\frac{1}{M} < c$.
We choose a $(\frac{\varepsilon}{2c^\prime})$-dense finite subset $\Lambda\subset \mathbb{T}_M$.
Here “$(\frac{\varepsilon}{2c^\prime})$-dense” means that 
(setting $\Gamma := \pi_M^{-1}(\Lambda)$) for any $t\in \mathbb{R}^n$ there 
exists $u\in \Gamma$ satisfying $|t-u|< \frac{\varepsilon}{2c^\prime}$.

By Lemma \ref{lemma: discrete filter}, we can construct a positive number $L$ and an
$M\mathbb{Z}^n$-equivariant continuous map 
$G\colon C(\Gamma, I)\to \lip_{c^\prime}(\Gamma, I)$ such that 
  \begin{enumerate}
    \item[(i)] for $\varphi \in \lip_c(\Gamma, I)$ we have $G(\varphi) = \varphi$,
    \item[(ii)] if $\varphi\in C(\Gamma, I)$ vanishes on $\Gamma \cap B_L(\xi)$ for some $\xi \in \Gamma$ then 
                  we have $G(\varphi)(\xi)  = 0$.
  \end{enumerate}

Let $\varphi\in C(\mathbb{R}^n, I)$.
We denote by $\varphi|_{\Gamma}$ the restriction of $\varphi$ to $\Gamma$ and consider 
$G(\varphi|_\Gamma) \in \lip_{c^\prime}(\Gamma, I)$.
Set $J = \left[0, 1+\frac{\varepsilon}{2}\right]$.
We define $F_0(\varphi)\in \lip_{c^\prime}(\mathbb{R}^n, J)$ by 
\[ F_0(\varphi)(t) = \inf\{G(\varphi|_\Gamma)(u) + c^\prime|t-u|\mid u\in \Gamma\} \quad (t\in \mathbb{R}^n). \]
We have $F_0(\varphi)(t) \leq 1+ \frac{\varepsilon}{2}$ 
because $\Gamma$ is $\left(\frac{\varepsilon}{2c^\prime}\right)$-dense in $\mathbb{R}^n$.
$F_0\colon C(\mathbb{R}^n, I)\to \lip_{c^\prime}(\mathbb{R}^n, J)$ is an $M\mathbb{Z}^n$-equivariant continuous map.
Define $\psi\colon \mathbb{R}^n\to \mathbb{R}$ by 
$\psi(t) = c^\prime \inf_{u\in \Gamma} |t-u|$.
We have 
\begin{equation}  \label{eq: psi and F_0 varphi}
   \psi(t) \leq F_0(\varphi)(t) \leq 1+\psi(t)
\end{equation}   
for all $\varphi\in C(\mathbb{R}^n, I)$ and
$t\in  \mathbb{R}^n$.
If $\varphi \in \lip_c(\mathbb{R}^n, I)$ then we have $F_0(\varphi)(u) = \varphi(u)$ for all $u\in \Gamma$.
Hence for any $t\in \mathbb{R}^n$, taking $u\in \Gamma$ with $|t-u| < \frac{\varepsilon}{2c^\prime}$, we have 
\begin{equation} \label{eq: the difference between F_0 varphi and varphi}
   |F_0(\varphi)(t) - \varphi(t)|\leq |F_0(\varphi)(t)-F_0(\varphi)(u)| + |\varphi(u)-\varphi(t)| < \varepsilon. 
\end{equation}   
If $\varphi\in C(\mathbb{R}^n, I)$ is a constant function then $F_0(\varphi) = \varphi + \psi$.

Set $A = \frac{1}{M^n}\int_{[0, M)^n}\psi(u)\, du$ where $du$ denotes the standard Lebesgue measure on $\mathbb{R}^n$.

\begin{claim} \label{claim: invariance of integral}
For any $t\in \mathbb{R}^n$ we have 
\[  \frac{1}{M^n} \int_{[0, M)^n} \psi(t-u)\, du = A. \]
\end{claim}
\begin{proof}
$\psi$ is an $M\mathbb{Z}^n$-invariant function.
Let $s = t - (M, \dots, M)$. We have 
\[ \frac{1}{M^n} \int_{[0, M)^n} \psi(t-u)\, du = \frac{1}{M^n} \int_{[0, M)^n} \psi(s+u) \, du 
    = \frac{1}{M^n} \int_{s+[0, M)^n} \psi(u) \, du. \]
Since $[0, M)^n$ is a fundamental domain for the lattice $M\mathbb{Z}^n$, we can find a measurable partition
$s+[0, M)^n = K_1\cup \dots \cup K_{2^n}$ and $a_i\in M\mathbb{Z}^n$ $(1\leq i \leq 2^n)$ such that 
$[0, M)^n = \bigcup_{i=1}^{2^n}(a_i + K_i)$ (disjoint union).
Then, by using the $M\mathbb{Z}^n$-invariance of $\psi$,
\[  \int_{s+[0, M)^n} \psi(u) \, du = \sum_{i=1}^{2^n} \int_{K_i} \psi(u) \, du = \sum_{i=1}^{2^n} \int_{a_i+K_i} \psi(u)\, du 
     = \int_{[0, M)^n} \psi(u)\, du. \]
\end{proof}

Let $\varphi\in C(\mathbb{R}^n, I)$. We define
\[ F(\varphi)(t) = \frac{1}{M^n} \int_{[0, M)^n} F_0(\sigma^u \varphi)(t-u)\, du - A, \quad (t\in \mathbb{R}^n), \]
where $\sigma^u \varphi$ is the shift of $\varphi$ (i.e. $(\sigma^u\varphi)(t) = \varphi(t+u)$).
Since $\psi(t-u) \leq F_0(\sigma^u \varphi)(t-u) \leq 1 + \psi(t-u)$ by (\ref{eq: psi and F_0 varphi}), 
we have $0\leq F(\varphi)(t) \leq 1$.
Therefore $F(\varphi)\in \lip_{c^\prime}(\mathbb{R}^n, I)$.
If $\varphi \in \lip_c(\mathbb{R}^n, I)$ then 
\begin{equation*}
   \begin{split}
   \left|F(\varphi)(t) - \varphi(t)\right| & = \frac{1}{M^n}\left|\int_{[0,M)^n} \left(F_0(\sigma^u \varphi)(t-u) - (\sigma^u\varphi)(t-u)\right)du \right|\\
   & \leq \frac{1}{M^n}\int_{[0,M)^n}\left|F_0(\sigma^u \varphi)(t-u) - (\sigma^u\varphi)(t-u)\right| du \\
   & < \varepsilon  \quad  (\text{by (\ref{eq: the difference between F_0 varphi and varphi})}).
   \end{split}
\end{equation*}   
If $\varphi$ is a constant function then 
\[ F(\varphi)(t) = \varphi(t) +  \frac{1}{M^n} \int_{[0, M)^n} \psi(t-u)\, du - A = \varphi(t). \]
We have shown that $F$ satisfies the required conditions (1) and (2).

\begin{claim}
$F\colon C(\mathbb{R}^n, I) \to \lip_{c^\prime}(\mathbb{R}^n, I)$ is an $\mathbb{R}^n$-equivariant continuous map.
\end{claim}

\begin{proof}
The continuity is an immediate consequence of the definition.
We are going to prove that it is $\mathbb{R}^n$-equivariant. This is similar to the proof of Claim \ref{claim: invariance of integral}.
Let $s, t\in \mathbb{R}^n$.
\begin{align*}
   F(\sigma^s \varphi)(t) & = \frac{1}{M^n}\int_{[0, M)^n} F_0\left(\sigma^{s+u}\varphi\right)(t-u)\, du -A \\
    & =  \frac{1}{M^n}\int_{s+[0, M)^n} F_0\left(\sigma^{u}\varphi\right)(t-u+s)\, du -A.
\end{align*}    
There exist a measurable partition $[0, M)^n = K_1\cup \dots \cup K_{2^n}$ and 
$a_1, \dots, a_{2^n}\in M\mathbb{Z}^n$ such that 
$s+[0,M)^n = \bigcup_{i=1}^{2^n} (a_i + K_i)$ (disjoint union).
Then 
\begin{align*}
  \int_{s+[0, M)^n} F_0\left(\sigma^{u}\varphi\right)(t-u+s)\, du & 
  = \sum_{i=1}^{2^n} \int_{a_i+K_i}  F_0\left(\sigma^u \varphi\right)(t-u+s)\, du \\
  & = \sum_{i=1}^{2^n} \int_{K_i} F_0\left(\sigma^{a_i+u}\varphi\right)(t-a_i - u + s)\, du.
\end{align*}
Since $F_0$ is $M\mathbb{Z}^n$-equivariant, we have
$F_0\left(\sigma^{a_i+u}\varphi\right)(t-a_i - u + s) = F_0\left(\sigma^u \varphi\right)(t-a_i-u+s+a_i) = 
F_0(\sigma^u \varphi)(t-u+s)$.
Therefore 
\begin{align*}
    \int_{s+[0, M)^n} F_0\left(\sigma^{u}\varphi\right)(t-u+s)\, du  & = 
    \sum_{i=1}^{2^n} \int_{K_i} F_0\left(\sigma^{u}\varphi\right)(t - u + s)\, du \\
    & = \int_{[0, M)^n} F_0(\sigma^u \varphi)(t-u+s)\, du. 
\end{align*}    
Thus 
\[  F(\sigma^s \varphi)(t)  = \frac{1}{M^n} \int_{[0, M)^n} F_0(\sigma^u \varphi)(t-u+s)\, du  - A  = F(\varphi)(t+s). \]
\end{proof}
Finally we are going to check the condition (3).
Let $R = L + \frac{\varepsilon}{2c^\prime} + 2\sqrt{n}M$.
Suppose that $\varphi\in C(\mathbb{R}^n, I)$ vanishes on $B_R(\xi)$ for some $\xi \in \mathbb{R}^n$.
Let $u\in [0, M)^n$. 
We have $(\sigma^u \varphi)(t) = 0$ for $t\in B_{L+\frac{\varepsilon}{2c^\prime}+ \sqrt{n}M}(\xi)$.
Let $\eta\in B_{\sqrt{n}M}(\xi)$. 
Since $\Gamma$ is $\left(\frac{\varepsilon}{2c^\prime}\right)$-dense in $\mathbb{R}^n$,
there exists $t\in \Gamma \cap B_{\frac{\varepsilon}{2c^\prime}}(\eta)$ with 
$\psi(\eta) = c^\prime |t-\eta|$. The function $\sigma^u \varphi$ vanishes on $B_L(t)\cap \Gamma$, and hence 
$G\left((\sigma^u \varphi)|_\Gamma\right)(t)=0$ by the condition (ii) of the map $G$. 
Therefore $F_0(\sigma^u\varphi)(\eta) = \psi(\eta)$.
It follows that 
\[ F(\varphi)(\xi)  = \frac{1}{M^n}\int_{[0, M)^n} F_0(\sigma^u \varphi)(\xi-u)\, du - A = \frac{1}{M^n}\int_{[0, M)^n}\psi(\xi-u)\, du - A =0. \]
\end{proof}

\subsection{Applications} \label{subsection: applications}

Here we present applications of Proposition \ref{proposition: Lipschitz filter}.
Throughout this subsection we assume that $X$ is a compact metrizable space and $T\colon \mathbb{R}^n\times X\to X$
is a continuous action of $\mathbb{R}^n$ on it.

\begin{lemma} \label{lemma: filtering cutoff}
Let $A$ be a $T$-invariant closed subset of $X$, and let $V$ be an open subset of $X$ with $A\subset V$.
For any $\delta>0$ there exists an $\mathbb{R}^n$-equivariant continuous map 
$\beta\colon X\to \lip_\delta(\mathbb{R}^n, I)$ such that 
  \begin{enumerate}
    \item $\beta(x)(t) = 1$ for any $x\in A$ and $t\in \mathbb{R}^n$ (where $\beta(x)(t)$ denotes the value of
    the function $\beta(x)\in \lip_\delta(\mathbb{R}^n, I)$ at $t\in \mathbb{R}^n$), 
    \item  $\beta(x)(0)= 0$ for any $x\in X\setminus V$.  
  \end{enumerate}
\end{lemma}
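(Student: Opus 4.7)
The plan is to build a continuous scalar Urysohn-type function $\alpha$ on $X$, transport it to a map into $C(\mathbb{R}^n, I)$ by composition with the flow, and then apply the Lipschitz filter from Proposition \ref{proposition: Lipschitz filter} to land in $\lip_\delta(\mathbb{R}^n, I)$. The subtle point is that condition (3) of that filter requires the input $\tilde\beta(x)$ to vanish on an entire ball $B_R(0)$, which is strictly stronger than the pointwise condition $\tilde\beta(x)(0)=0$. So one cannot simply separate $A$ from $X\setminus V$ by a naive Urysohn function and expect condition (2) of the lemma to come out; the remedy is to shrink $V$ to a slightly $T$-controlled subset before applying Urysohn.

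Concretely, I first apply Proposition \ref{proposition: Lipschitz filter} with $c' = \delta$, some auxiliary $c \in (0, \delta)$, and an arbitrary $\varepsilon > 0$, obtaining a radius $R > 0$ and an $\mathbb{R}^n$-equivariant continuous filter $F \colon C(\mathbb{R}^n, I) \to \lip_\delta(\mathbb{R}^n, I)$ which preserves constants and sends any function vanishing on $B_R(\xi)$ to a function vanishing at $\xi$. I then introduce the ``$R$-interior''
\[ V' = \{x \in V \mid T^t x \in V \text{ for all } t \in B_R(0)\}, \]
which is open by joint continuity of $T$ together with compactness of $B_R(0)$. Since $A$ is $T$-invariant and contained in $V$, every orbit through $A$ stays in $V$, so $A \subset V'$. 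Conversely, if $x \notin V$, then no point of the orbit segment $\{T^t x : |t| \le R\}$ can lie in $V'$, for otherwise applying $T^{-t}$ and the definition of $V'$ would place $x$ back in $V$.

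By normality of the compact metrizable space $X$ and Urysohn's lemma, I choose a continuous $\alpha \colon X \to [0,1]$ with $\alpha \equiv 1$ on $A$ and $\alpha \equiv 0$ off $V'$, and set $\tilde\beta(x)(t) = \alpha(T^t x)$. This $\tilde\beta \colon X \to C(\mathbb{R}^n, I)$ is continuous and $\mathbb{R}^n$-equivariant by construction, so $\beta := F \circ \tilde\beta$ inherits both properties and takes values in $\lip_\delta(\mathbb{R}^n, I)$. For $x \in A$, the $T$-invariance of $A$ makes $\tilde\beta(x) \equiv 1$ a constant function, so condition (2) of the filter forces $\beta(x) \equiv 1$, verifying the first required property. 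For $x \notin V$, the preceding paragraph yields $\tilde\beta(x) \equiv 0$ on $B_R(0)$, so condition (3) of the filter delivers $\beta(x)(0) = 0$, verifying the second. The only step that requires genuine thought is the introduction of $V'$, which serves exactly to convert the pointwise condition at points $x \notin V$ into a ball condition that the filter can act upon; the rest is a routine pullback-and-filter assembly.
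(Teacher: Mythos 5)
Your proof is correct and follows essentially the same route as the paper: invoke the Lipschitz filter from Proposition \ref{proposition: Lipschitz filter}, shrink $V$ to a subset whose $B_R(0)$-orbit segments stay inside $V$ (your explicit $V'$ is the set the paper calls $W$ without writing out its definition), take a Urysohn function for the pair $(A, X\setminus V')$, pull back along the flow, and filter. The only cosmetic difference is that you define $V'$ explicitly as the $R$-interior of $V$ while the paper merely asserts the existence of such an open $W$; the argument is otherwise the same.
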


\begin{proof}
By Proposition \ref{proposition: Lipschitz filter}, we can find $R>0$ and an equivariant continuous map 
$F\colon C(\mathbb{R}^n, I)\to \lip_\delta(\mathbb{R}^n, I)$ that satisfies the following two conditions.
   \begin{enumerate}
     \item[(i)] If $\varphi\in C(\mathbb{R}^n, I)$ is a constant function then $F(\varphi) = \varphi$.
     \item[(ii)] If $\varphi\in C(\mathbb{R}^n, I)$ vanishes on $B_R(\xi)$ for some $\xi \in \mathbb{R}^n$ then 
                    $F(\varphi)(\xi) = 0$.    
   \end{enumerate}
Since $A$ is a $T$-invariant closed subset, 
we can find an open subset $W$ with $A\subset W\subset V$ such that for any $x\in W$ and $t\in B_R(0)$ we have 
$T^t x\in V$.
Let $\alpha\colon X\to [0,1]$ be a continuous function such that $\alpha= 1$ on $A$ and $\supp (\alpha) \subset W$. 
We define an equivariant continuous map $\tilde{\alpha}\colon X\to C(\mathbb{R}^n, I)$ by 
$\tilde{\alpha}(x)(t) = \alpha(T^t x)$.

We define an equivariant continuous map $\beta\colon X\to \lip_\delta(\mathbb{R}^n, I)$ by 
$\beta(x) = F\left(\tilde{\alpha}(x)\right)$.
If $x\in A$ then $\tilde{\alpha}(x)$ is constantly equal to $1$. Hence $\beta(x)$ is also constantly equal to $1$.
If $x\in X\setminus V$ then for any $t\in B_R(0)$ we have $T^t x \not\in W$ and hence $\alpha(T^t x) = 0$.
Therefore $\tilde{\alpha}(x)(t) = 0$ for all $t\in B_R(0)$.
It follows from the property (ii) of the filter $F$ that we have $\beta(x)(0) = F\left(\tilde{\alpha}(x)\right)(0) = 0$. 
\end{proof}

\begin{lemma} \label{lemma: Lipschitz approximation}
Let $0<c<c^\prime$ and $\varepsilon>0$. Let $A$ be a $T$-invariant closed subset of $X$.
Suppose that $f$ and $g$ are equivariant continuous maps from $X$ to $\lip_c(\mathbb{R}^n, I)$ such that 
$\left|f(x)(t)-g(x)(t)\right| < \varepsilon$ for all $x\in A$ and $t\in \mathbb{R}^n$.
Then there exists an equivariant continuous map $h\colon X\to \lip_{c^\prime}(\mathbb{R}^n, I)$ such that 
$h(x) = g(x)$ for all $x\in A$ and that $\left|h(x)(t)-f(x)(t)\right|< \varepsilon$ for all $x\in X$ and $t\in \mathbb{R}^n$.
\end{lemma}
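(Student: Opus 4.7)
The plan is to interpolate between $f$ and $g$ using an equivariant Lipschitz cutoff supplied by Lemma \ref{lemma: filtering cutoff}, setting
\[ h(x)(t) := (1-\beta(x)(t))\,f(x)(t) + \beta(x)(t)\,g(x)(t), \]
where $\beta\colon X\to \lip_\delta(\mathbb{R}^n, I)$ with $\delta := c^\prime - c > 0$ will be chosen so that $\beta(x)\equiv 1$ on $A$ and $\beta(x)(t)$ is forced to vanish wherever the difference between $f$ and $g$ is not yet controlled. Since $f(x)(t), g(x)(t)\in I$ and $\beta(x)(t)\in [0,1]$, the convex combination lands in $I$ automatically, and $\mathbb{R}^n$-equivariance of $h$ is immediate from that of $f$, $g$, and $\beta$.

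To choose the open neighborhood needed to invoke Lemma \ref{lemma: filtering cutoff}, I will exploit equivariance through the identity $|f(x)(t)-g(x)(t)| = |f(T^t x)(0)-g(T^t x)(0)|$: combined with the $T$-invariance of $A$, the hypothesis on $A\times \mathbb{R}^n$ collapses to $|f(y)(0)-g(y)(0)|<\varepsilon$ for every $y\in A$. I then set $V := \{y\in X : |f(y)(0)-g(y)(0)|<\varepsilon\}$, which is open (by continuity of evaluation at $0$) and contains $A$. Applying Lemma \ref{lemma: filtering cutoff} to the pair $(A, V)$ with the slack $\delta = c^\prime-c$ produces the desired $\beta$ with $\beta(x)\equiv 1$ on $A$, $\beta(x)(0)=0$ for $x\notin V$, and each $\beta(x)$ being $\delta$-Lipschitz. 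The key consequence is the implication: if $\beta(x)(t)>0$, then by equivariance $\beta(T^t x)(0)>0$, so $T^t x\in V$, and hence $|f(x)(t)-g(x)(t)|<\varepsilon$. This immediately yields $|h(x)(t)-f(x)(t)| = \beta(x)(t)\cdot|g(x)(t)-f(x)(t)|<\varepsilon$ everywhere, and $h(x)=g(x)$ on $A$ is built in.

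The main technical step, and the reason the slack $c^\prime - c$ is forced into the setup, is verifying that $h(x)\in \lip_{c^\prime}(\mathbb{R}^n, I)$. I plan to expand the telescoping identity
\[ h(x)(t) - h(x)(t^\prime) = (1-\beta(x)(t))\bigl(f(x)(t)-f(x)(t^\prime)\bigr) + \beta(x)(t)\bigl(g(x)(t)-g(x)(t^\prime)\bigr) + \bigl(\beta(x)(t)-\beta(x)(t^\prime)\bigr)\bigl(g(x)(t^\prime)-f(x)(t^\prime)\bigr), \]
and estimate each term using $f(x), g(x)\in \lip_c(\mathbb{R}^n, I)$, $\beta(x)\in \lip_\delta(\mathbb{R}^n, I)$, and the trivial bound $|g(x)(t^\prime)-f(x)(t^\prime)|\leq 1$. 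The first two terms combine to at most $c|t-t^\prime|$ (a convex combination of two $c$-Lipschitz estimates with weights $1-\beta(x)(t)$ and $\beta(x)(t)$), and the third contributes at most $\delta|t-t^\prime|$, summing to exactly $(c+\delta)|t-t^\prime| = c^\prime|t-t^\prime|$. This algebraic identity is the real content of the argument, as it is what pairs the Lipschitz budget of $\beta$ with the gap $c^\prime - c$; once it is in place, all conclusions of the lemma assemble routinely.
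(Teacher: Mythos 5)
Your proof is correct and follows essentially the same route as the paper: interpolate between $f$ and $g$ via an equivariant $\delta$-Lipschitz cutoff from Lemma \ref{lemma: filtering cutoff} applied to $(A,V)$, then verify the Lipschitz bound via the telescoping identity. The only (minor) difference is that you bound the cross term by $|\beta(x)(t)-\beta(x)(t')|\cdot|g(x)(t')-f(x)(t')|\leq\delta|t-t'|$ using $|g-f|\leq 1$, letting you take $\delta=c'-c$, whereas the paper uses the looser bound $|f|+|g|\leq 2$ and so takes $\delta<(c'-c)/2$; both work.
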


\begin{proof}
We take an open set $V$ of $X$ such that $A\subset V$ and $\left|f(x)(0)- g(x)(0)\right|<\varepsilon$ for all 
$x\in V$.
We take $\delta>0$ with $c+2\delta<c^\prime$.

By Lemma \ref{lemma: filtering cutoff}, we can find an equivariant continuous map 
$\beta\colon X\to \lip_\delta(\mathbb{R}^n, I)$ that satisfies the following two conditions.
   \begin{enumerate}
     \item  $\beta(x)$ is constantly equal to $1$ for all $x\in A$.
     \item  $\beta(x)(0) = 0$ for all $x \in X\setminus V$.
   \end{enumerate}
For $x\in X$ we define $h(x)\colon \mathbb{R}^n\to I$ by 
\[ h(x)(t) = \left(1-\beta(x)(t)\right)\cdot f(x)(t) + \beta(x)(t)\cdot g(x)(t). \]
For $x\in A$ we have $h(x)(t) = g(x)(t)$.
Let $x\in X\setminus A$. 
We have $h(x)(t) = \left(1-\beta(T^t x)(0)\right)\cdot f(T^t x)(0) + \beta(T^t x)(0) \cdot g(T^t x)(0)$.
If $T^t x\in X\setminus V$ then $\beta(T^t x) (0) = 0$ and hence $h(x)(t) = f(T^t x)(0) = f(x)(t)$.
If $T^t x\in V$ then $|f(T^t x)(0) - g(T^t x)(0)| < \varepsilon$ and hence 
\[ \left|h(x)(t) - f(x)(t)\right| = \left|\beta(T^t x)(0)\left(g(T^t x)(0) - f(T^t x)(0)\right)\right| < \varepsilon. \]

We are going to check that $h(x)\in \lip_{c^\prime}(\mathbb{R}^n, I)$ for all $x\in X$.
\begin{align*}
  h(x)(t) - h(x)(u) = & \left(1-\beta(x)(t)\right)\cdot f(x)(t) + \beta(x)(t)\cdot g(x)(t) 
                          - \left(1-\beta(x)(u)\right)\cdot f(x)(u) \\
                          & - \beta(x)(u)\cdot g(x)(u) \\
   = & \left(1-\beta(x)(t)\right)\cdot \left(f(x)(t) - f(x)(u)\right) \\
      &+ \left(\beta(x)(u)- \beta(x)(t)\right)\cdot \left(f(x)(u)-g(x)(u)\right) \\
       &   + \beta(x)(t) \left(g(x)(t) - g(x)(u)\right). 
\end{align*}
Since $f(x)$ and $g(x)$ are $c$-Lipschitz and $\beta(x)$ is $\delta$-Lipschitz,
\begin{align*}
 \left|h(x)(t)-h(x)(u)\right| \leq & \left(1-\beta(x)(t)\right)\cdot \left|f(x)(t) - f(x)(u)\right| \\
      &+ \left|\beta(x)(u)- \beta(x)(t)\right|\cdot \left(\left|f(x)(u)\right|+\left|g(x)(u)\right|\right) \\
       &   + \beta(x)(t) \left|g(x)(t) - g(x)(u)\right|  \\
       \leq & \left(1-\beta(x)(t)\right)\cdot c|t-u| +2\delta|t-u| + \beta(x)(t)\cdot c|t-u|  \\
       \leq & (c+2\delta) |t-u| \\
        \leq & c^\prime |t-u|.
\end{align*}
\end{proof}

Recall that $\fix(X, T) = \{x\in X\mid T^t x = x \, (\forall t\in \mathbb{R}^n)\}$.

\begin{lemma}  \label{lemma: Lipschitz extension}
Let $0<c<c^\prime$ and $\varepsilon>0$.
Let $A$ be a closed invariant subset of $X$.
For any equivariant continuous map $g\colon A\to \lip_c(\mathbb{R}^n, I)$ there exists an 
equivariant continuous map $g^\prime\colon X\to \lip_{c^\prime}(\mathbb{R}^n, I)$ such that 
  \begin{enumerate}
    \item $\left|g(x)(t)-g^\prime(x)(t)\right| < \varepsilon$ for all $x\in A$ and $t\in \mathbb{R}^n$,
    \item $g^\prime(x) = g(x)$ for all $x\in A\cap \fix(X, T)$.
  \end{enumerate}
\end{lemma}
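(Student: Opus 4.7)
The plan is to reduce the problem to extending a \emph{scalar-valued} function via the classical Tietze theorem, and then to invoke the Lipschitz filter of Proposition \ref{proposition: Lipschitz filter} to convert the resulting continuous extension into a $c^\prime$-Lipschitz one while preserving constants and approximating Lipschitz inputs up to $\varepsilon$.

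Concretely, because $g$ is $\mathbb{R}^n$-equivariant, the map $g(x) \in \lip_c(\mathbb{R}^n, I)$ is completely determined by the scalar value $\tilde{g}(x) := g(x)(0)$ through the formula $g(x)(t) = \tilde{g}(T^t x)$. This $\tilde{g}\colon A \to I$ is continuous (the evaluation at $0$ is continuous on $\lip_c(\mathbb{R}^n, I)$). Since $X$ is compact metrizable hence normal, the classical Tietze extension theorem provides a continuous extension $\tilde{G}\colon X \to I$ with $\tilde{G}|_A = \tilde{g}$. I then define
\[ G\colon X \to C(\mathbb{R}^n, I), \qquad G(x)(t) = \tilde{G}(T^t x). \]
This map is clearly $\mathbb{R}^n$-equivariant and continuous, and for $x\in A$ the equivariance of $g$ yields $G(x)(t) = \tilde{g}(T^t x) = g(T^t x)(0) = g(x)(t)$, so $G|_A = g$.

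Now I apply Proposition \ref{proposition: Lipschitz filter} with the given parameters $\varepsilon, c, c^\prime$ to obtain an $\mathbb{R}^n$-equivariant continuous filter $F\colon C(\mathbb{R}^n, I) \to \lip_{c^\prime}(\mathbb{R}^n, I)$ that preserves constant functions and satisfies $|F(\varphi)(t) - \varphi(t)| < \varepsilon$ for every $\varphi \in \lip_c(\mathbb{R}^n, I)$. Setting $g^\prime(x) := F(G(x))$ produces an equivariant continuous map $X \to \lip_{c^\prime}(\mathbb{R}^n, I)$. For $x \in A$ we have $G(x) = g(x) \in \lip_c(\mathbb{R}^n, I)$, so property (1) of the filter immediately gives $|g^\prime(x)(t) - g(x)(t)| < \varepsilon$ for all $t \in \mathbb{R}^n$, verifying condition (1) of the lemma. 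For $x \in A \cap \fix(X, T)$, the function $g(x)$ is $\sigma$-invariant and therefore a constant; the constant-preserving property of $F$ then gives $g^\prime(x) = F(g(x)) = g(x)$, verifying condition (2).

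There is essentially no obstacle: Proposition \ref{proposition: Lipschitz filter} was designed precisely to enable arguments of this shape, and the only genuinely new work is the bookkeeping required to reconstruct an equivariant extension from its values at the origin. The constant-preservation clause of the filter is what lets us obtain exact (rather than $\varepsilon$-approximate) equality on the fixed-point set, which is the reason the lemma is formulated with the dichotomy between (1) and (2).
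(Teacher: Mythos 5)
Your proof is correct and takes essentially the same route as the paper's: both reduce $g$ to the scalar function $x\mapsto g(x)(0)$, extend it by Tietze, rebuild an equivariant $C(\mathbb{R}^n,I)$-valued extension by composing with the flow, and then post-compose with the Lipschitz filter of Proposition \ref{proposition: Lipschitz filter}, relying on its approximation property on $\lip_c$ inputs for condition (1) and its constant-preservation property for condition (2).
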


\begin{proof}
By Proposition \ref{proposition: Lipschitz filter}, we can construct an equivariant continuous map 
$F\colon C(\mathbb{R}^n, I) \to \lip_{c^\prime}(\mathbb{R}^n, I)$ that satisfies the following two conditions.
 \begin{enumerate}
   \item[(i)] For $\varphi\in \lip_c(\mathbb{R}^n, I)$ we have $\left|F(\varphi)(t)-\varphi(t)\right| < \varepsilon$ for all $t\in \mathbb{R}^n$.
   \item[(ii)] If $\varphi\in C(\mathbb{R}^n, I)$ is a constant function, then $F(\varphi) = \varphi$.
 \end{enumerate}
We define $g_0\colon A\to I$ by $g_0(x) = g(x)(0)$.
By the Tietze extension theorem, we can extend $g_0$ to a continuous function $g_0\colon X\to I$.
Then we extend $g$ to an equivariant continuous map $g\colon X\to C(\mathbb{R}^n, I)$ by 
$g(x)(t) = g_0(T^t x)(0)$.

We define $g^\prime \colon X\to \lip_{c^\prime}(\mathbb{R}^n, I)$ by 
$g^\prime(x) = F\left(g(x)\right)$.
If $x\in A$ then $g(x)\in \lip_c(\mathbb{R}^n, I)$ and hence 
$\left|g^\prime(x)(t)-g(x)(t)\right| = \left|F\left(g(x)\right)(t) - g(x)(t)\right| < \varepsilon$ for all $t\in \mathbb{R}^n$.
Moreover, if $x\in A\cap \fix(X, T)$, then $g(x)$ is a constant function and hence $g^\prime(x) = F\left(g(x)\right) = g(x)$.
\end{proof}

The next proposition is the main result of this subsection.
This will be used in the proofs of Propositions \ref{proposition: C_k(A) is open and dense}
and \ref{proposition: C_k(B, B^prime) is open and dense}.

\begin{proposition}  \label{prop: extension and filter}
Let $0<c<c^\prime$, and $\varepsilon_1>0$ and $\varepsilon_2>0$.
Let $A$ be a $T$-invariant closed subset of $X$.
Let $f\colon X\to \lip_c(\mathbb{R}^n, I)$ and $g\colon A\to \lip_c(\mathbb{R}^n, I)$ be equivariant continuous maps that satisfy 
$\left|f(x)(t)- g(x)(t)\right|<\varepsilon_1$ for all $x\in A$ and $t\in \mathbb{R}^n$.
Then there exists an equivariant continuous map $h\colon X\to \lip_{c^\prime}(\mathbb{R}^n, I)$ such that
  \begin{enumerate}
    \item $\left|h(x)(t)-f(x)(t)\right|<\varepsilon_1$ for all $x\in X$ and $t\in \mathbb{R}^n$,
    \item $\left|h(x)(t)- g(x)(t)\right| < \varepsilon_2$ for all $x\in A$ and $t\in \mathbb{R}^n$,
    \item $h(x) = g(x)$ for all $x\in A\cap \fix(X, T)$.
  \end{enumerate}
\end{proposition}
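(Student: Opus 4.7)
The plan is to chain the two preceding lemmas: first extend $g$ from $A$ to all of $X$ as a Lipschitz family using Lemma \ref{lemma: Lipschitz extension}, then interpolate between this extension and $f$ using Lemma \ref{lemma: Lipschitz approximation}. The key observation is that Lemma \ref{lemma: Lipschitz extension} delivers exactly condition (3) for free, while Lemma \ref{lemma: Lipschitz approximation} is precisely the mechanism that turns an approximation on a $T$-invariant closed set into a global Lipschitz perturbation.

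\emph{Step 1 (Extension).} Fix intermediate Lipschitz constants $c<c_1<c'$ and a small auxiliary parameter $\eta>0$, to be specified below (we will require $\eta\leq \varepsilon_2$ and $\eta$ small relative to the slack in condition (1)). Apply Lemma \ref{lemma: Lipschitz extension} to $g\colon A\to \lip_c(\mathbb{R}^n,I)$ with Lipschitz constants $(c,c_1)$ and tolerance $\eta$. This yields an equivariant continuous map
\[ \tilde{g}\colon X\to \lip_{c_1}(\mathbb{R}^n,I) \]
with $|\tilde{g}(x)(t)-g(x)(t)|<\eta$ for every $x\in A$ and $t\in\mathbb{R}^n$, and with $\tilde{g}(x)=g(x)$ for every $x\in A\cap \fix(X,T)$. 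The second property is what gives us condition (3) at the end.

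\emph{Step 2 (Interpolation).} Since $\lip_c\subset \lip_{c_1}$, we may view $f$ as a continuous equivariant map $X\to \lip_{c_1}(\mathbb{R}^n,I)$. The triangle inequality yields
\[ |f(x)(t)-\tilde{g}(x)(t)|\leq |f(x)(t)-g(x)(t)|+|g(x)(t)-\tilde{g}(x)(t)|<\varepsilon_1+\eta \]
for $x\in A$, $t\in\mathbb{R}^n$. Now apply Lemma \ref{lemma: Lipschitz approximation} to the pair $(f,\tilde{g})$ with Lipschitz constants $(c_1,c')$, tolerance $\varepsilon_1+\eta$, and the same $T$-invariant closed set $A$. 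We obtain an equivariant continuous map $h\colon X\to \lip_{c'}(\mathbb{R}^n,I)$ with $h(x)=\tilde{g}(x)$ for $x\in A$ and $|h(x)(t)-f(x)(t)|<\varepsilon_1+\eta$ for all $x\in X$, $t\in\mathbb{R}^n$.

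\emph{Verification.} Condition (3) is immediate: for $x\in A\cap \fix(X,T)$, $h(x)=\tilde{g}(x)=g(x)$. Condition (2) follows since, on $A$, $h(x)=\tilde{g}(x)$, so $|h(x)(t)-g(x)(t)|=|\tilde{g}(x)(t)-g(x)(t)|<\eta\leq \varepsilon_2$. For condition (1), we get $|h(x)(t)-f(x)(t)|<\varepsilon_1+\eta$. The main obstacle is tightening this to the strict bound $<\varepsilon_1$: since $A$ is compact, $f-g$ is jointly continuous, and the hypothesis provides pointwise strict inequality $|f-g|<\varepsilon_1$ on $A\times\mathbb{R}^n$, one exploits the resulting slack on the open set $V=\{x\in X : |f(x)(0)-\tilde{g}(x)(0)|<\varepsilon_1\}$ (which contains $A$ provided $\eta$ is chosen sufficiently small relative to the pointwise gap $\varepsilon_1-|f-g|$) together with the interpolation formula in the proof of Lemma \ref{lemma: Lipschitz approximation}, namely $h-f=\beta\cdot(\tilde{g}-f)$ which vanishes outside the orbit of $V$. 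Choosing $\eta\leq \varepsilon_2$ and sufficiently small to maintain $V\supset A$ closes the argument.
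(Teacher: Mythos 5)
Your approach is the same as the paper's: extend $g$ to all of $X$ via Lemma \ref{lemma: Lipschitz extension}, then interpolate between $f$ and that extension via Lemma \ref{lemma: Lipschitz approximation}. The only difference is where the auxiliary tolerance $\eta$ is pinned down. In your Step~2 you invoke Lemma \ref{lemma: Lipschitz approximation} with tolerance $\varepsilon_1 + \eta$, which can only yield the too-weak bound $|h(x)(t)-f(x)(t)| < \varepsilon_1 + \eta$; your verification paragraph then tries to rescue condition (1) by reopening the internals of that lemma. That patch is unnecessary if $\eta$ is chosen correctly up front, which is what the paper does. Since $A$ is compact, $T$-invariant, and $f,g$ are equivariant, the quantity $\max_{x\in A}\sup_{t\in\mathbb{R}^n}|f(x)(t)-g(x)(t)| = \max_{x\in A}|f(x)(0)-g(x)(0)|$ is attained and is strictly less than $\varepsilon_1$. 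Choose $\eta \in (0,\varepsilon_2)$ so that this maximum plus $\eta$ is still $< \varepsilon_1$. After the extension step the triangle inequality then gives $|f(x)(t)-\tilde g(x)(t)| < \varepsilon_1$ strictly on all of $A\times\mathbb{R}^n$, so you can invoke Lemma \ref{lemma: Lipschitz approximation} with tolerance exactly $\varepsilon_1$ and obtain condition (1) directly, treating the lemma as a black box. So the structure and ingredients of your proof are correct; the one fix is to move the quantitative choice of $\eta$ to the front of the argument instead of patching afterward.
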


\begin{proof}
Take $\varepsilon \in (0, \varepsilon_2)$ such that $|f(x)(0) - g(x)(0)|+\varepsilon < \varepsilon_1$ for all $x\in A$.
Since $A$ is $T$-invariant, we also have $\left|f(x)(t)-g(x)(t)\right|+ \varepsilon < \varepsilon_1$ for all $x\in A$ and 
$t\in \mathbb{R}^n$.
Take arbitrary $c_1\in (c, c^\prime)$, say $c_1 = \frac{c+c^\prime}{2}$.

By applying Lemma \ref{lemma: Lipschitz extension} to $g$, we can construct an equivariant continuous map 
$g^\prime\colon X\to \lip_{c_1}(\mathbb{R}^n, I)$ such that 
  \begin{enumerate}
    \item[(i)] $\left|g^\prime(x)(t) - g(x)(t)\right| < \varepsilon$ for all $x\in A$ and $t\in \mathbb{R}^n$,
    \item[(ii)] $g^\prime(x)= g(x)$ for all $x\in A \cap \fix(X, T)$.  
  \end{enumerate}
It follows from (i) that for $x\in A$ and $t\in \mathbb{R}^n$
\[  \left|g^\prime(x)(t)-f(x)(t)\right| < \varepsilon + \left|g(x)(t)-f(x)(t)\right| < \varepsilon_1. \]   
We apply Lemma \ref{lemma: Lipschitz approximation} to $f$ and $g^\prime$. 
Then we get 
an equivariant continuous map $h\colon X\to \lip_{c^\prime}(\mathbb{R}^n, I)$ such that 
  \begin{enumerate}
     \item[(a)]  $h(x) = g^\prime(x)$ for all $x\in A$,
     \item[(b)]  $\left|h(x)(t) - f(x)(t)\right| < \varepsilon_1$ for all $x\in X$ and $t\in \mathbb{R}^n$.
  \end{enumerate}
It follows from the above (i) and (a) that for all $x\in A$ and $t\in \mathbb{R}^n$
\[ \left|h(x)(t) - g(x)(t)\right| = \left|g^\prime(x)(t) - g(x)(t)\right| < \varepsilon  < \varepsilon_2. \]
For $x\in A \cap \fix(X, T)$, we have $h(x) = g^\prime(x) = g(x)$.
\end{proof}

\section{Construction of local perturbation maps} \label{section: construction of local perturbation maps}

Let $\Omega\subset \mathbb{R}^n$ and $\varphi\colon \Omega\to \mathbb{R}$.
For $p\in \Omega$ and $r>0$ with $B_r(p) \subset \Omega$, we define
\[  L(\varphi, p, r)  = \sup_{0<|t-p|\leq r} \frac{|\varphi(t)-\varphi(p)|}{|t-p|}. \] 

For $r>0$ we denote $B_r(0) = \{t\in \mathbb{R}^n\mid |t|\leq r\}$ by $B_r$.
We sometimes denote it by $B_r(\mathbb{R}^n)$ for clarifying the ambient space $\mathbb{R}^n$.
(We will consider $B_r(\mathbb{R}^k)$ for $k\leq n$ in \S \ref{section: local section} and \S \ref{section: proof of main propositions}.) 
We also denote $\partial B_r = \{t\in \mathbb{R}^n\mid |t|=r\}$.

The next proposition will provide us a “local model” for constructing a perturbation map in the proof of 
Proposition \ref{proposition: C_k(A) is open and dense}.

\begin{proposition} \label{prop: local perturbation map, no translation invariance}
Let $r>0$, $\varepsilon>0$ and $0<c<c^\prime$.
There exists a continuous map $G\colon \lip_c(B_r, I) \to \lip_{c^\prime}(B_r, I)$ that satisfies the 
following conditions.
  \begin{enumerate}
    \item $\left|G(\varphi)(t) - \varphi(t)\right|< \varepsilon$ for all $\varphi\in \lip_c(B_r, I)$ and $t\in B_r$.
    \item $G(\varphi)(t) = \varphi(t)$ for all $\varphi\in \lip_c(B_r, I)$ and $t\in \partial B_r$.
    \item For any $\varphi \in \lip_c(B_r, I)$, $G(\varphi)$ has no “translation invariance”.
    Namely, for any line $L\subset \mathbb{R}^n$ through the origin, there exist distinct two points $t, u \in B_r$ that satisfy 
    $t-u \in L$ and $G(\varphi)(t) \neq G(\varphi)(u)$.
  \end{enumerate}
\end{proposition}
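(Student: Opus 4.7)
The plan is to define $G$ by adding a simple radial bump: $G(\varphi)(t) = \varphi(t) + \delta\,\chi(t)$, where $\chi(t) = r - |t|$ is a cone vanishing on $\partial B_r$ and $\delta > 0$ is a small constant to be chosen. If the sum would ever exceed $1$, one truncates by $\min(1,\,\cdot\,)$, which preserves the Lipschitz bound; for $\delta r$ small this is a cosmetic technicality. Continuity of $G \colon \lip_c(B_r, I) \to \lip_{c'}(B_r, I)$ is immediate from the explicit formula. Condition (2) follows from $\chi|_{\partial B_r} \equiv 0$; condition (1) reduces to $\delta r < \varepsilon$; and, since $\chi$ is $1$-Lipschitz, the function $G(\varphi)$ has Lipschitz constant at most $c + \delta$, so for the target space we require $\delta \leq c' - c$.

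The heart of the proof is condition (3). Suppose that for some $\varphi \in \lip_c(B_r, I)$ and some nonzero $v \in \mathbb{R}^n$, the function $G(\varphi)$ were constant on every chord of $B_r$ parallel to $v$. On the diameter $\ell$ through the origin in direction $v$, this forces $\varphi|_\ell(s) = \mathrm{const} - \delta(r - |s|)$ in the arclength parameter $s \in [-r, r]$, so that $\varphi|_\ell$ has directional derivative of absolute value exactly $\delta$ for all $s \neq 0$. Since $\varphi \in \lip_c$, this forces $\delta \leq c$. Hence, choosing $\delta \in (c, c' - c)$ with $\delta r < \varepsilon$, no such $\varphi$ exists, and condition (3) is verified.

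The main obstacle is that the open interval $(c, c' - c)$ is nonempty only when $c' > 2c$. In the complementary regime $c < c' \leq 2c$, the explicit function $\varphi(t) = (C - \delta r) + \delta|t|$ lies in $\lip_\delta \subset \lip_c$ whenever $\delta \leq c$, and renders $G(\varphi) \equiv C$ constant, violating (3); thus a pure additive bump cannot suffice. To handle this regime, I would combine the bump with the Lipschitz filter of Proposition~\ref{proposition: Lipschitz filter}: add a bump whose Lipschitz constant exceeds $c$ (temporarily pushing the perturbed function out of $\lip_{c'}$), then apply the filter to project back into $\lip_{c'}$ while arranging the filter's parameters so that enough of the bump's directional ``roughness'' is preserved to block cancellation by any $\varphi \in \lip_c$. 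Alternatively, one can let the bump depend continuously on $\varphi$ itself, so that the adversarial cancellation $\varphi = \mathrm{const} - \delta\chi$ is defeated by a $\varphi$-adapted correction. Verifying continuity of such a modified $G$ and the uniform non-invariance of $G(\varphi)$ across all $\varphi \in \lip_c(B_r, I)$ is the most delicate step of the argument.
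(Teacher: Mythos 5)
Your analysis of the naive additive bump is correct as far as it goes, and you are right that $\delta \in (c,\,c'-c)$ can only be chosen when $c' > 2c$. But there is a second, more immediate bug that you dismiss as cosmetic: the truncation $\min(1,\,\cdot\,)$. Take $\varphi \equiv 1 \in \lip_c(B_r, I)$. Then $\varphi + \delta\chi \geq 1$ everywhere on $B_r$, so $G(\varphi) \equiv 1$ is constant and violates (3) outright — for \emph{every} $\delta>0$, including in your ``good'' regime $c' > 2c$. An additive bump that pushes values upward must saturate whenever $\varphi$ is close to $1$ (and a downward bump saturates near $0$), so no fixed additive perturbation followed by truncation can work. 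The fixes you sketch for the regime $c < c' \leq 2c$ (filtering, or making the bump depend on $\varphi$) are the right instinct but are not carried out, so the proposal has a genuine gap on two fronts.

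The paper's proof resolves both issues with a different mechanism. It first replaces $\varphi$ by $\varphi_1 = \max\{0,\,\varphi - \psi\}$, where $\psi$ is a small cone of height $\varepsilon_1$ supported away from $\partial B_r$; this ``makes room at the top'' so that $\varphi_1 \leq 1 - \varepsilon_1$ on $B_{r/2}$, and it raises the Lipschitz constant only to $c_1 = \frac{c+c'}{2} < c'$. Then, on a tiny ball $B_\delta$ with $c'\delta < \varepsilon_1$, it overwrites $\varphi_1$ by the McShane extension from $\partial B_\delta$ with constant $c'$ (Lemma~\ref{lemma: elementary Lipschitz extension}). Because of the headroom, the $\min(1,\cdot)$ truncation never activates near the origin, and one computes $L(G(\varphi),0,\delta) = c'$ exactly, whereas at any $p$ with $|p| = \frac{3r}{4}$ one has $G(\varphi) = \varphi_1$ on $B_\delta(p)$, so $L(G(\varphi),p,\delta) \leq c_1 < c'$. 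Translation invariance along a line $L$ through the origin would force these two local Lipschitz constants to agree, a contradiction. The decisive difference from your approach is that the obstruction to translation invariance is not ``the slope of $\varphi$ would have to be $>c$'' (which only works when $c' > 2c$ and fails under truncation) but ``the local Lipschitz modulus $L(G(\varphi),\cdot,\delta)$ would have to take the same value at two points where it is forced to differ,'' an invariant that the Lipschitz extension formula pins down exactly regardless of how close $c'$ is to $c$.
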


\begin{proof}
We can assume $0<\varepsilon < 1$.
Set $c_1 = \frac{c+c^\prime}{2}$. We take $\varepsilon_1\in (0, \frac{\varepsilon}{3})$ so small that 
there exists $\psi\in \lip_{\frac{c^\prime-c}{2}}(B_r, I)$ satisfying $0\leq \psi \leq \varepsilon_1$ all over $B_r$, 
$\psi(t) = \varepsilon_1$ for $t\in B_{\frac{r}{2}}$ and $\psi(t) = 0$ for $t\in \partial B_r$.

Suppose we are given $\varphi\in \lip_c(B_r, I)$.
Set $\varphi_1(t) = \max\{0, \varphi(t)-\psi(t)\}$.
We have $\varphi_1  \in \lip_{c_1}(B_r, I)$, $|\varphi_1(t)-\varphi(t)|\leq \varepsilon_1$ all over $B_r$,
$\varphi_1(t) = \varphi(t)$ for $t\in \partial B_r$ and $0\leq \varphi_1\leq 1-\varepsilon_1$ on $B_{r/2}$.
We take $0<\delta<\min\{\frac{r}{4}, \frac{\varepsilon_1}{c^\prime}\}$. 

We define $G(\varphi) \in \lip_{c^\prime}(B_r, I)$ as follows.
  \begin{enumerate}
     \item[(i)]   First we set $G(\varphi)(t)  = \varphi_1(t)$ for $\delta\leq |t|\leq r$.
     \item[(ii)]  Next, for $|t| < \delta$ we set
     \[ G(\varphi)(t) = \min\{1, \inf\{\varphi_1(u) + c^\prime|u-t|\mid u \in  \partial B_\delta\}\}. \]
  \end{enumerate}  
We have $G(\varphi)(0) = \min_{t\in \partial B_\delta} \varphi_1(t) + c^\prime \delta$
because $c^\prime \delta < \varepsilon_1$ and $\varphi_1 \leq 1-\varepsilon_1$ on $B_{\frac{r}{2}}$.
Take $t_0 \in \partial B_\delta$ achieving the minimum of $\varphi_1(t)$ over $\partial B_\delta$.
Then $G(\varphi)(0) = \varphi_1(t_0) + c^\prime \delta = G(\varphi)(t_0) + c^\prime \delta$ by (i), and hence 
$L\left(G(\varphi), 0, \delta\right) \geq c^\prime$ (since $|t_0|=\delta$).
On the other hand it follows from Lemma \ref{lemma: elementary Lipschitz extension} that 
$L\left(G(\varphi), 0, \delta\right) \leq c^\prime$. Therefore we have
\begin{equation} \label{eq: local Lipschitz constant at the origin, translation invariance}
  L\left(G(\varphi), 0, \delta\right) = c^\prime.
\end{equation}
     
We claim that $|G(\varphi)(t)-\varphi_1(t)| < 2\varepsilon_1$.
This is obvious for $\delta\leq |t|\leq r$. 
For $|t| < \delta$, we take $u\in \partial B_\delta$ with $|t-u|\leq \delta$
and estimate
\begin{align*}
  |G(\varphi)(t)-\varphi_1(t)| & \leq |G(\varphi)(t)-G(\varphi)(u)| + |\varphi_1(u)-\varphi_1(t)|, 
  \quad (\text{by } G(\varphi)(u)= \varphi_1(u)) \\
  & \leq 2 c^\prime |t-u|  \\
  & \leq 2 c^\prime \delta \\
  & \leq 2\varepsilon_1.
\end{align*}      
Therefore we have 
\[    |G(\varphi)(t)-\varphi(t)| \leq   |G(\varphi)(t)-\varphi_1(t)| + |\varphi_1(t)-\varphi(t)| < 3\varepsilon_1 < \varepsilon. \]
We also have $G(\varphi)(t) = \varphi_1(t) = \varphi(t)$ for $t\in \partial B_r$. 
     
Finally we are going to prove that $G(\varphi)$ has no translation invariance.
Let $L\subset \mathbb{R}^n$ be a line through the origin.
Take $p\in L$ with $|p| = \frac{3r}{4}$. 
Then $B_\delta(p)\subset B_r\setminus B_{\frac{r}{2}}$ by $\delta < \frac{r}{4}$,
and hence $B_\delta(p) \cap B_\delta =\emptyset$.
Therefore $G(\varphi)(t) = \varphi_1(t)$ for $t\in B_\delta(p)$.
Since $\varphi_1\in \lip_{c_1}(B_r, I)$, we have 
\[  L\left(G(\varphi), p, \delta\right) \leq c_1 < c^\prime. \]
If $G(\varphi)$ has the translation invariance in the direction of $L$, then we have
\[ G(\varphi)(t+p)  = G(\varphi)(t) \quad (\forall t\in B_\delta). \]
This implies $L\left(G(\varphi), p, \delta\right) =  L\left(G(\varphi), 0, \delta\right)$.
However we have $ L\left(G(\varphi), 0, \delta\right) = c^\prime$ by (\ref{eq: local Lipschitz constant at the origin, translation invariance}).
This is a contradiction.
\end{proof}

\begin{lemma} \label{lemma: multi bump function}
Let $r>0$, $\varepsilon>0$ and $0<c_1<c_2<c_3<c_4$. 
Let $p_1,\dots, p_N\in B_{\frac{r}{4}}\setminus \{0\}$ be distinct points.
There exist a positive number $\delta$ with 
\begin{equation} \label{eq: choice of delta in multi-bumpy function}
   \delta < \min\left\{\frac{r}{4}, \min_{k} \frac{|p_k|}{3}, \min_{k \neq \ell} \frac{|p_k-p_\ell|}{3}\right\} 
\end{equation}   
and a continuous map 
\[ G\colon \lip_{c_1}(B_r, I)\times I^N \to \lip_{c_4}(B_r, I), \quad 
    \left(\varphi, (s_1, s_2, \dots, s_N)\right) \mapsto  \varphi^\prime, \]
that satisfy the following conditions.
   \begin{enumerate}
      \item $|\varphi^\prime(t)-\varphi(t)| < \varepsilon$ for all $t\in B_r$.
      \item $\varphi^\prime(t) = \varphi(t)$ for all $t\in \partial B_r$.
      \item $L\left(\varphi^\prime, 0, \frac{\delta}{2}\right) = c_4$.
      \item $L\left(\varphi^\prime, t, \frac{\delta}{2}\right) \leq c_3$ for all $\delta \leq |t| \leq r-\frac{\delta}{2}$.
      \item For any $1\leq k \leq N$ we have $L(\varphi^\prime, p_k, \delta) = (1-s_k)c_2+ s_k c_3$. Moreover, for all 
               $t\in B_\delta(p_k)$ we have $L(\varphi^\prime, t, \delta) \leq L(\varphi^\prime, p_k, \delta)$.
               (Notice that we have $B_\delta(t) \cap B_\delta = \emptyset$ and
               $B_\delta(t) \cap B_\delta(p_\ell) = \emptyset$ for $\ell \neq k$ and $t \in B_\delta(p_k)$
               by (\ref{eq: choice of delta in multi-bumpy function}).)
   \end{enumerate}
\end{lemma}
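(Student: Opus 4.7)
The plan is to generalize the bump construction of Proposition \ref{prop: local perturbation map, no translation invariance} so that, in addition to a steep bump at the origin with slope $c_4$, one places a milder bump at each $p_k$ whose local Lipschitz constant is parameterized continuously by $s_k$ via $b_k := (1-s_k)c_2 + s_k c_3 \in [c_2, c_3]$. First I would fix auxiliary constants $c_1' \in (c_1, c_2)$ (say $c_1' = (c_1+c_2)/2$) and $\varepsilon_1 \in (0, \varepsilon/3)$, and then choose $\delta > 0$ obeying (\ref{eq: choice of delta in multi-bumpy function}) together with the additional inequality $\delta < \varepsilon_1/c_4$; this last condition ensures that the McShane--Whitney extensions used below will not saturate at the ceiling $1$.

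Next I would build a cut-off $\psi \in \lip_{c_1' - c_1}(B_r, I)$ equal to $\varepsilon_1$ on a neighborhood of $B_{\delta/2} \cup \bigcup_{k} B_\delta(p_k)$ and vanishing on $\partial B_r$, and set $\varphi_1 := \max\{0, \varphi - \psi\}$, which lies in $\lip_{c_1'}(B_r, I)$ and satisfies $\varphi_1 \le 1 - \varepsilon_1$ on every bump ball. The modified function $\varphi' = G(\varphi, s_1, \dots, s_N)$ is then defined piecewise: on $B_{\delta/2}$ as the McShane--Whitney extension from $\partial B_{\delta/2}$ with slope $c_4$; on each $B_\delta(p_k)$ as the analogous extension with slope $b_k$; and as $\varphi_1$ on the complement. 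Continuity of $G$ in all its arguments follows from Lemma \ref{lemma: elementary Lipschitz extension} together with the continuity of the infimum formula in the slope parameter. The global $c_4$-Lipschitz property and the matching at boundaries of bump balls reduce to a segment-by-segment estimate: for $t, u$ in different regions, I would pass the segment $[t,u]$ through a boundary point $v$ where $\varphi'=\varphi_1$ and bound each sub-segment using $c_1' < c_2 \le b_k \le c_3 < c_4$; condition (\ref{eq: choice of delta in multi-bumpy function}) guarantees that any such segment meets at most one bump ball.

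Condition (1) then follows from the pointwise inequality $|\varphi' - \varphi_1| \le (c_1' + c_4)\delta < 2\varepsilon_1$ inside bump balls, combined with $|\varphi_1 - \varphi| \le \varepsilon_1$. Condition (2) holds since, by $\delta < r/4$, all bump balls lie well inside $B_r$, so $\varphi' = \varphi_1 = \varphi$ on $\partial B_r$. Condition (3) is obtained as in Proposition \ref{prop: local perturbation map, no translation invariance}: the lower bound $L(\varphi', 0, \delta/2) \ge c_4$ is witnessed by the ray from the origin to a minimizer of $\varphi_1$ on $\partial B_{\delta/2}$, while the upper bound is the global $c_4$-Lipschitz property. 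Condition (5) is analogous: the equality $L(\varphi', p_k, \delta) = b_k$ is witnessed by the ray from $p_k$ to a minimizer of $\varphi_1$ on $\partial B_\delta(p_k)$, and the inequality $L(\varphi', t, \delta) \le b_k$ for $t \in B_\delta(p_k)$ uses (\ref{eq: choice of delta in multi-bumpy function}) once more to rule out any other bump inside $B_\delta(t)$, after which the segment argument gives the desired bound.

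The delicate step, and the main obstacle, is condition (4); it is precisely the reason for taking the origin bump of radius $\delta/2$ rather than $\delta$. For $t$ with $\delta \le |t| \le r - \delta/2$, the ball $B_{\delta/2}(t)$ meets $\overline{B_{\delta/2}}$ only in boundary points of the latter where $\varphi'$ still agrees with the mildly Lipschitz $\varphi_1$, so the $c_4$-slope part of the origin bump lies outside $B_{\delta/2}(t)$. A case analysis then settles the estimate: if $t \in B_\delta(p_k)$ for some $k$, condition (5) together with the monotonicity of $L(\cdot, t, r)$ in $r$ gives $L(\varphi', t, \delta/2) \le L(\varphi', t, \delta) \le b_k \le c_3$; otherwise the segment argument, together with $\max(c_1', \max_k b_k) \le c_3$, yields the same bound.
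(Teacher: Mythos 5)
Your proposal is correct and follows essentially the same approach as the paper: subtract a small Lipschitz cutoff $\psi$ to create headroom below the ceiling $1$, then replace $\varphi_1$ on the origin ball $B_{\delta/2}$ and on each $B_\delta(p_k)$ by McShane--Whitney extensions from the respective boundaries with slopes $c_4$ and $b_k = (1-s_k)c_2 + s_k c_3$. The only cosmetic differences are that the paper keeps $\varphi_1 \in \lip_{c_2}(B_r, I)$ (allowing $b_k = c_2$ as a degenerate but still valid endpoint) rather than introducing an intermediate $c_1'$, sets $\psi = \varepsilon_1$ on all of $B_{r/2}$ rather than just near the bump balls, and uses $2c_4\delta < \varepsilon_1$ with $\varepsilon_1 < \varepsilon/2$ rather than your $\delta < \varepsilon_1/c_4$ with $\varepsilon_1 < \varepsilon/3$; none of these affect the substance.
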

Notice that the condition (\ref{eq: choice of delta in multi-bumpy function}) implies that the balls 
$B_\delta, B_\delta(p_1), B_\delta(p_2),\dots, B_\delta(p_N)$ are mutually disjoint and that they are all contained in 
$B_{\frac{r}{2}}$.
 
Probably the conditions (3), (4), (5) look unmotivated. 
Readers may imagine that $\varphi^\prime$ is a “multi-bumpy function” that has local peaks at $0, p_1, \dots, p_N$ and 
that the height of the peak at each $p_k$ is controlled by the parameter $s_k$.
In particular, we have “encoded” the value of $(s_1, \dots, s_N)$ to the function $\varphi^\prime$.
(This explanation is not precise but, hopefully, it will help readers to visualize the function $\varphi^\prime$.)  

\begin{proof}
We can assume $0<\varepsilon < 1$.
Let $\varepsilon_1\in \left(0, \frac{\varepsilon}{2}\right)$ be a small number such that we can take
$\psi\in \lip_{c_2-c_1}(B_r, I)$ which satisfies $0\leq \psi(t) \leq \varepsilon_1$ $(\forall t\in B_r)$, 
$\psi(t) = \varepsilon_1$ for $|t|\leq \frac{r}{2}$ and $\psi(t) = 0$ for $t\in \partial B_r$. 

Suppose we are given $\left(\varphi, (s_1, \dots, s_N)\right)\in \lip_{c_1}(B_r, I)\times I^N$.
Set $\varphi_1(t) = \max\{0, \varphi(t)-\psi(t)\}$.
Then $\varphi_1\in \lip_{c_2}(B_r, I)$, $|\varphi_1(t)-\varphi(t)|\leq \varepsilon_1 < \frac{\varepsilon}{2}$,
$\varphi_1 = \varphi$ on $\partial B_r$, and $0\leq \varphi_1(t)\leq 1-\varepsilon_1$ on $B_{\frac{r}{2}}$.
We take a positive number $\delta$ that satisfies (\ref{eq: choice of delta in multi-bumpy function}) and 
$2c_4 \delta < \varepsilon_1$.
We are going to define $\varphi^\prime = G\left(\varphi, (s_1, \dots, s_N)\right)$.

First we set $\varphi^\prime(t) = \varphi_1(t)$ for $t\in \{\frac{\delta}{2}\leq |t|\leq r\}\cap \bigcap_{k=1}^N\{|t-p_k|\geq \delta\}$.
For $t\in B_{\frac{\delta}{2}}$ we define 
\[ \varphi^\prime(t) = \inf\left\{\varphi_1(u) + c_4|t-u|\middle|\, u\in \partial B_{\frac{\delta}{2}}\right\}. \]
For $t\in B_\delta(p_k)$ $(1\leq k \leq N)$ we define (recall that $B_\delta(p_k) \subset B_r\setminus B_{\frac{\delta}{2}}$)
\[ \varphi^\prime(t) = \inf\left\{\varphi_1(u) + \left((1-s_k)c_2+s_k c_3\right)|t-u|\middle|\, u\in \partial B_{\delta}(p_k)\right\}. \]
Since $c_4\delta < \varepsilon_1$ and $\varphi_1(t) \leq 1-\varepsilon_1$ on $B_{\frac{r}{2}}$, 
we have $\varphi^\prime(t) \leq 1$ and 
\[ L\left(\varphi^\prime, 0, \frac{\delta}{2}\right) = c_4,\quad  L\left(\varphi^\prime, p_k, \delta\right) = (1-s_k)c_2 + s_k c_3. \]
We have defined $\varphi^\prime\in \lip_{c_4}(B_r, I)$ that satisfies the conditions (2) and (3).
It is also easy to see that $\varphi^\prime$ satisfies the conditions (4) and (5).

We claim that $|\varphi^\prime(t) - \varphi_1(t)| \leq \varepsilon_1 < \frac{\varepsilon}{2}$.
(This implies the condition (1).)
For $t\in B_{\frac{\delta}{2}}$ we take $u\in \partial B_{\frac{\delta}{2}}$ with $|t-u|\leq \frac{\delta}{2}$ and then
\begin{align*}
  \left|\varphi^\prime(t)-\varphi_1(t)\right| & \leq \left|\varphi^\prime(t)-\varphi^\prime(u)\right|
    + \left|\varphi_1(u) - \varphi_1(t)\right|, \quad \left(\text{by } \varphi^\prime(u) = \varphi_1(u)\right) \\
    & \leq c_4 \frac{\delta}{2} + c_4 \frac{\delta}{2} < \varepsilon_1, \quad (\text{by } c_4\delta < \varepsilon_1).
\end{align*}   
For $t\in B_\delta(p_k)$ we take $u\in \partial B_{\delta}(p_k)$ with $|t-u|\leq \delta$ and then 
\begin{align*}
  \left|\varphi^\prime(t)-\varphi_1(t)\right| & \leq \left|\varphi^\prime(t)-\varphi^\prime(u)\right|
    + \left|\varphi_1(u) - \varphi_1(t)\right|, \quad \left(\text{by } \varphi^\prime(u) = \varphi_1(u)\right) \\
    & \leq c_4 \delta + c_4 \delta < \varepsilon_1, \quad (\text{by } 2c_4\delta < \varepsilon_1).
\end{align*}   
\end{proof}

The next proposition will provide us a local model for constructing a perturbation map in the proof of 
Proposition \ref{proposition: C_k(B, B^prime) is open and dense}. Note that although Proposition \ref{prop: local perturbation map} resembles \cite[Lemma 4.7(1)(2)(4)]{GH}, its proof is rather different which may be useful in future generalizations.

\begin{proposition} \label{prop: local perturbation map}
Let $(K, d)$ be a compact metric space. Let $r>0$, $\varepsilon>0$ and $0<c<c^\prime$.
For any continuous map $f\colon K\to \lip_c\left(B_r(\mathbb{R}^n), I\right)$ there exists a continuous map 
$g\colon K\to \lip_{c^\prime}\left(B_r(\mathbb{R}^n), I\right)$ that satisfies the following three conditions.
  \begin{enumerate}
     \item $\left|f(x)(t)-g(x)(t)\right| < \varepsilon$ for all $x\in K$ and $t\in B_r$.
     \item $g(x)(t) = f(x)(t)$ for all $x\in K$ and $t\in \partial B_r$.
     \item Let $x, y\in K$. If there exists $s\in B_{\frac{r}{2}}$ satisfying 
     \[  \forall t\in B_{\frac{r}{2}}: \quad g(x)(t+s) = g(y)(t) \]
     then we have $d(x, y) < \varepsilon$.
  \end{enumerate}
\end{proposition}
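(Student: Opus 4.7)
The plan is to apply Lemma \ref{lemma: multi bump function} to ``encode'' the identity of $x \in K$ into the local Lipschitz structure of $g(x)$ near finitely many marked points in $B_r$. Using the compactness of $K$, I would fix a finite cover $K = \bigcup_{k=1}^N B_{\varepsilon/2}(x_k)$ and define continuous functions $\phi_k \colon K \to I$ by $\phi_k(x) := \max\{0, 1 - 2 d(x, x_k)/\varepsilon\}$. The resulting $\phi := (\phi_1, \ldots, \phi_N) \colon K \to I^N$ enjoys the key \emph{separating property}: if $\phi(x) = \phi(y)$, then any $k$ with $d(x, x_k) < \varepsilon/2$ witnesses $\phi_k(x) = \phi_k(y) > 0$ and hence also $d(y, x_k) < \varepsilon/2$, forcing $d(x,y) < \varepsilon$.

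Next, I would pick constants $c_1 := c < c_2 < c_3 < c_4 := c'$ and $N$ distinct points $p_1, \ldots, p_N$ close to the origin, say in $B_{r/16} \setminus \{0\}$, so that the $\delta$ furnished by Lemma \ref{lemma: multi bump function} satisfies $\delta < r/8$. This yields a continuous map $G \colon \lip_c(B_r, I) \times I^N \to \lip_{c'}(B_r, I)$, and I set $g(x) := G(f(x), \phi(x))$. Continuity of $g$ and properties (1), (2) of the proposition are immediate from the corresponding conditions of Lemma \ref{lemma: multi bump function}. For property (3), suppose $g(x)(t+s) = g(y)(t)$ for all $t \in B_{r/2}$ and some $s \in B_{r/2}$; this translation identity yields $L(g(x), p+s, \rho) = L(g(y), p, \rho)$ whenever the relevant balls stay in the admissible domain. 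In the case $|s| \geq \delta$, taking $p = 0$, $\rho = \delta/2$ gives $L(g(x), s, \delta/2) = L(g(y), 0, \delta/2) = c'$ by Lemma condition (3), while $\delta \leq |s| \leq r/2 \leq r - \delta/2$ combined with Lemma condition (4) forces $L(g(x), s, \delta/2) \leq c_3 < c'$, a contradiction.

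The delicate case is $|s| < \delta$: for each $k$, both $p_k + s$ and $p_k - s$ lie in $B_\delta(p_k)$, and the constraint $\delta < r/8$ ensures that the translation identity can be applied at both $p_k$ and $p_k - s$ (the worst point being $p_k - s + u$ with $|u| \leq \delta$, which satisfies $|p_k - s + u| \leq r/4 + 2\delta < r/2$). Lemma condition (5), applied to $g(x)$ at $p_k + s$ and to $g(y)$ at $p_k - s$, then produces the pair of inequalities
\begin{align*}
(1 - \phi_k(y)) c_2 + \phi_k(y) c_3 &= L(g(x), p_k + s, \delta) \leq L(g(x), p_k, \delta) = (1-\phi_k(x))c_2 + \phi_k(x) c_3, \\
(1 - \phi_k(x)) c_2 + \phi_k(x) c_3 &= L(g(y), p_k - s, \delta) \leq L(g(y), p_k, \delta) = (1-\phi_k(y))c_2 + \phi_k(y) c_3,
\end{align*}
which together force $\phi_k(x) = \phi_k(y)$ for every $k$, whence $d(x,y) < \varepsilon$ by the separating property of $\phi$. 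I expect the main obstacle to be precisely this small-shift case, where the Lipschitz-constant contradiction at the origin is unavailable and the novelty lies in invoking the \emph{sub-maximality} clause of condition (5) of Lemma \ref{lemma: multi bump function} in both translation directions simultaneously, so as to pin the encoding values $\phi_k(x) = \phi_k(y)$ down exactly rather than one-sidedly.
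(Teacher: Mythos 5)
Your proposal is correct, and its architecture matches the paper's proof closely: both use Lemma~\ref{lemma: multi bump function} with a finite cover of $K$, bump functions to encode $x\in K$ via the parameters $(s_1,\dots,s_N)$, and the local Lipschitz constants at $0$ and at the $p_k$ to detect translation invariance, splitting into the cases $|s|\geq\delta$ (handled identically via conditions~(3),(4)) and $|s|<\delta$ (handled via condition~(5)). The one genuine point of departure is the final step in the small-shift case. The paper uses bump functions $\chi_k$ that are \emph{identically $1$} on the pieces $K_k$ of the cover: it picks the $k$ with $y\in K_k$, applies condition~(5) only at $p_k+s$ to get the one-sided inequality $c_3\leq(1-\chi_k(x))c_2+\chi_k(x)c_3$, which already forces $\chi_k(x)=1$ and hence $x\in U_k$. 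You instead use distance-based functions $\phi_k(x)=\max\{0,1-2d(x,x_k)/\varepsilon\}$, which need never attain $1$, so you compensate by applying condition~(5) \emph{in both translation directions} $p_k\pm s$, which correctly yields the two-sided bound $\phi_k(x)\leq\phi_k(y)$ and $\phi_k(y)\leq\phi_k(x)$, hence $\phi_k(x)=\phi_k(y)$ for every $k$, and then your separating property finishes. Both variants are valid; the paper's is slightly leaner (one index, one direction) because the maximum-attaining bump functions make a single saturation argument sufficient, while yours extracts the exact encoding and is perhaps more naturally adaptable if one ever wants to recover more quantitative information about $d(x,y)$. Your domain checks ($p_k\in B_{r/16}$, $\delta<r/48$) are sound; just note that you do not get to \emph{demand} $\delta<r/8$ from the lemma, but your choice of the $p_k$ automatically forces $\delta<\min_k|p_k|/3<r/48$, so the constraint holds for free.
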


\begin{proof}
Let $K=K_1\cup K_2\cup \dots \cup K_N$ be a covering of $K$ by compact sets $K_k$ with $\diam\, K_k < \varepsilon$ 
$(1\leq k \leq N)$.
We take open sets $U_k$ that contain $K_k$ and satisfy $\diam\, U_k < \varepsilon$.
We choose a continuous function $\chi_k \colon K\to I$ $(1\leq k \leq N)$ for which we have 
$\chi_k = 1$ on $K_k$ and $\supp \,\chi_k \subset U_k$.
We set $c_1 = c$, $c_2 =\frac{3c+c^\prime}{4}$, $c_3 = \frac{c+c^\prime}{2}$, $c_4 = c^\prime$.
We have $0<c_1<c_2<c_3<c_4$.

We take distinct points $p_1, \dots, p_N\in B_{\frac{r}{4}}\setminus \{0\}$. (Any choice will work.)
By using Lemma \ref{lemma: multi bump function}, we can take a positive number 
\[ \delta < \min\left\{\frac{r}{4}, \min_{k} \frac{|p_k|}{3}, \min_{k \neq \ell} \frac{|p_k-p_\ell|}{3}\right\} \]
and a continuous map 
\[ G\colon \lip_{c_1}(B_r, I)\times I^N \to \lip_{c_4}(B_r, I), \quad 
    \left(\varphi, (s_1, s_2, \dots, s_N)\right) \mapsto  \varphi^\prime, \]
that satisfy the following conditions.
\begin{enumerate}
      \item[(i)] $|\varphi^\prime(t)-\varphi(t)| < \varepsilon$ for all $t\in B_r$.
      \item[(ii)] $\varphi^\prime(t) = \varphi(t)$ for all $t\in \partial B_r$.
      \item[(iii)] $L\left(\varphi^\prime, 0, \frac{\delta}{2}\right) = c_4$.
      \item[(iv)] $L\left(\varphi^\prime, t, \frac{\delta}{2}\right) \leq c_3$ for all $\delta \leq |t| \leq r-\frac{\delta}{2}$.
      \item[(v)] For any $1\leq k \leq N$ we have $L(\varphi^\prime, p_k, \delta) = (1-s_k)c_2+ s_k c_3$. Moreover, for all 
               $t\in B_\delta(p_k)$ we have $L(\varphi^\prime, t, \delta) \leq L(\varphi^\prime, p_k, \delta)$.
   \end{enumerate}

We define a continuous map $g\colon K\to \lip_{c^\prime}(B_r, I)$ by 
setting 
\[ g(x) = G\left(f(x), (\chi_1(x), \dots, \chi_N(x))\right). \]
It follows from (i) and (ii) that, for any $x\in K$, we have $|f(x)(t)-g(x)(t)| < \varepsilon$ for $t\in B_r$ and 
$f(x)(t) = g(x)(t)$ for $t\in \partial B_r$.

Suppose that for some $x, y\in K$ and $s\in B_{\frac{r}{2}}$ we have $g(x)(s+t) = g(y)(t)$ for all $t\in B_{\frac{r}{2}}$.
Then 
\[ L\left(g(x), s, \frac{\delta}{2}\right) = L\left(g(y), 0, \frac{\delta}{2}\right) = c_4, \quad (\text{by } (\mathrm{iii})). \]
If $|s|\geq \delta$ then we have $L\left(g(x), s, \frac{\delta}{2}\right) \leq c_3 < c_4$ by (iv).
Therefore we must have $|s| < \delta$. 

We take $K_k$ that contains the point $y$.
Then $\chi_k(y) = 1$ and hence
\[ L\left(g(x), s+p_k, \delta\right) = L\left(g(y), p_k, \delta\right) = c_3, \quad (\text{by } (\mathrm{v})). \] 
It follows from (v) and $s+p_k \in B_\delta(p_k)$ that 
\[  c_3 = L\left(g(x), s+p_k, \delta\right) \leq \left(1-\chi_k(x)\right) c_2 + \chi_k(x) c_3. \]
Then we must have $\chi_k (x) = 1$. This implies $x\in U_k$ and hence $d(x, y) \leq \diam U_k < \varepsilon$.
\end{proof}

\section{Local section} \label{section: local section}

For $0\leq k \leq n$, we denote by $\gr(n, k)$ the Grassmannian manifold consisting of $k$-dimensional linear subspaces of 
$\mathbb{R}^n$.

Let $X$ be a compact metrizable space and $T\colon \mathbb{R}^n\times X\to X$ a continuous action of $\mathbb{R}^n$ on it.
For $x\in X$ we define $\mathfrak{g}_x$ as the connected component of the stabilizer of $x$, $\{t\in \mathbb{R}^n\mid T^t x = x\}$, containing the origin. As the stabilizer of $x$ is a closed subgroup of $\mathbb{R}^n$, we may use the well-known characterization of  closed subgroups of $\mathbb{R}^n$,\footnote{Let $H \subseteq \mathbb{R}^n$ be a closed subgroup. 
Then there exist integers $k,m \geq 0$ with $k+m \leq n$, and a linear embedding
$
  \varphi : \mathbb{R}^k \times \mathbb{Z}^m \;\longrightarrow\; \mathbb{R}^n
$
such that
$
  H = \varphi\!\left( \mathbb{R}^k \times \mathbb{Z}^m \right).
 $ } to conclude $\mathfrak{g}_x$ is the largest linear subspace $\mathfrak{g}\subset \mathbb{R}^n$
for which we have $T^t x =x$ ($\forall t\in \mathfrak{g}$).
We set $X_k = \{x\in X\mid \dim \mathfrak{g}_x\geq k\}$ for $0\leq k \leq n$ and $X_{n+1} = \emptyset$.
These are $T$-invariant closed subsets of $X$, and 
we have $X=X_0\supset X_1\supset X_2\supset \dots \supset X_n$.
For $x\in X$, let $\mathfrak{h}_x$ be the orthogonal complement of $\mathfrak{g}_x$ in $\mathbb{R}^n$.
Both $\mathfrak{g}_x$ and $\mathfrak{h}_x$ are $T$-invariant: $\mathfrak{g}_{T^t x} = \mathfrak{g}_x$ and 
$\mathfrak{h}_{T^t x} = \mathfrak{h}_x$ for any $t\in \mathbb{R}^n$.
The maps 
\[ X_k\setminus X_{k+1}\ni x\mapsto \mathfrak{g}_x\in \gr(n, k), \quad 
    X_k\setminus X_{k+1}\ni x \mapsto \mathfrak{h}_x \in \gr(n, n-k) \]
are both continuous. 

\begin{lemma} \label{lemma: local trivialization}
Let $p\in X_k\setminus X_{k+1}$ with $0\leq k \leq n$.
For a sufficiently small open neighborhood $U$ of $p$ in $X_k\setminus X_{k+1}$, we can find a linear isometry 
$\rho_x\colon \mathbb{R}^{n-k} \to \mathfrak{h}_x$ for each $x\in U$ such that $\rho_x$ is continuous in $x$ and $T$-invariant 
(i.e. if $x, T^u x \in U$ for some $u\in \mathbb{R}^n$ then $\rho_{T^u x} = \rho_x$).
\end{lemma}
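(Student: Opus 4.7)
The plan is to construct $\rho_x$ by transporting a fixed orthonormal basis of $\mathfrak{h}_p$ via the orthogonal projection onto $\mathfrak{h}_x$, followed by Gram--Schmidt orthonormalization. The $T$-invariance will then be essentially automatic from the already-noted identity $\mathfrak{h}_{T^u x} = \mathfrak{h}_x$.

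In detail, I would first fix an orthonormal basis $e_1,\dots,e_{n-k}$ of $\mathfrak{h}_p \cong \mathbb{R}^{n-k}$, and let $P_x\colon \mathbb{R}^n \to \mathfrak{h}_x$ denote orthogonal projection. The map $x \mapsto \mathfrak{h}_x$ is continuous into $\gr(n, n-k)$ by the fact recorded just before the lemma, so $x \mapsto P_x$ is continuous into $\mathrm{End}(\mathbb{R}^n)$ on $X_k \setminus X_{k+1}$. Since $P_p$ restricts to the identity on $\mathfrak{h}_p$, by continuity there is an open neighborhood $U$ of $p$ in $X_k\setminus X_{k+1}$ on which the vectors
\[
w_i(x) := P_x(e_i), \qquad i=1,\dots,n-k,
\]
remain linearly independent (they form a small perturbation of $e_1,\dots,e_{n-k}$). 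Applying the Gram--Schmidt procedure to $w_1(x),\dots,w_{n-k}(x)$ produces an orthonormal basis $v_1(x),\dots,v_{n-k}(x)$ of $\mathfrak{h}_x$ that depends continuously on $x \in U$, and I would then define
\[
\rho_x(a_1,\dots,a_{n-k}) := a_1 v_1(x) + \dots + a_{n-k} v_{n-k}(x).
\]
This is a linear isometry from $\mathbb{R}^{n-k}$ onto $\mathfrak{h}_x$, continuous in $x$.

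For $T$-invariance, the key observation is that the entire construction depends on $x$ only through the subspace $\mathfrak{h}_x$ (the fixed basis $e_1,\dots,e_{n-k}$ of $\mathfrak{h}_p$ is independent of $x$, and $P_x$ is determined by $\mathfrak{h}_x$). Whenever $x, T^u x \in U$ we have $\mathfrak{h}_{T^u x} = \mathfrak{h}_x$ by the remarks preceding the lemma, so $P_{T^u x} = P_x$, whence $w_i(T^u x) = w_i(x)$, $v_i(T^u x) = v_i(x)$ and therefore $\rho_{T^u x} = \rho_x$ as required.

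The only mild obstacle is ensuring the neighborhood $U$ is chosen small enough that $P_x|_{\mathfrak{h}_p}$ remains invertible for every $x \in U$; this follows from continuity of $x \mapsto P_x$ together with the fact that invertibility is an open condition in $\mathrm{End}(\mathfrak{h}_p)$. Note that the hypothesis $p \in X_k \setminus X_{k+1}$ is what allows us to work in a neighborhood on which the rank of $\mathfrak{h}_x$ is constantly $n-k$, so that Gram--Schmidt is applied to a fixed number of vectors throughout $U$.
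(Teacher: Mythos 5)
Your proof is correct and takes essentially the same route as the paper: project a fixed orthonormal basis of $\mathfrak{h}_p$ onto $\mathfrak{h}_x$, apply Gram--Schmidt on a small neighborhood where the projected vectors stay independent, and deduce $T$-invariance from the already-recorded identity $\mathfrak{h}_{T^u x} = \mathfrak{h}_x$. The paper's argument is word-for-word the same construction.
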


A similar statement holds for $\mathfrak{g}_x$ as well. However we do not need it.

\begin{proof}
Let $\pi_x\colon \mathbb{R}^n\to \mathfrak{h}_x$ be the orthogonal projection.
Then $\pi_x$ is continuous in $x\in X_k\setminus X_{k+1}$ and $T$-invariant (i.e. $\pi_{T^t x} = \pi_x$).

We choose an orthonormal basis $e_1, e_2, \dots, e_{n-k}$ of $\mathfrak{h}_p$.
For a sufficiently small neighborhood $U$ of $p$ in $X_k\setminus X_{k+1}$, the vectors 
$\pi_x(e_1), \pi_x(e_2), \dots, \pi_x(e_{n-k})$ form a (not necessarily orthogonal) basis of $\mathfrak{h}_x$ for $x\in U$. 
We apply the Gram--Schmidt algorithm to them and obtain an orthonormal basis $u_{1, x}, \dots, u_{n-k, x}$ of $\mathfrak{h}_x$.
These vectors are continuous in $x\in U$ and $T$-invariant.
Therefore they provide a linear isometry $\rho_x\colon \mathbb{R}^{n-k}\to \mathfrak{h}_x$ that is continuous in $x\in U$ and 
$T$-invariant.
\end{proof}

We define $\mathfrak{h} = \{(x, t)\in X\times \mathbb{R}^n\mid t\in \mathfrak{h}_x\}$.
For a subset $A\subset X$, we denote $\mathfrak{h}|_A = \{(x, t)\in A\times \mathbb{R}^n\mid t\in \mathfrak{h}_x\}$.
For $r>0$ we also denote $B_r(\mathfrak{h}|_A) = \{(x, t)\in \mathfrak{h}|_A\mid |t|\leq r\}$ and 
$S_r(\mathfrak{h}|_A) = \{(x, t)\in \mathfrak{h}|_A\mid |t|=r\}$.

The following proposition is a main result of this subsection.

\begin{proposition} \label{prop: local section}
Let $\varepsilon>0$ and $p\in X_k\setminus X_{k+1}$ $(0\leq k \leq n-1)$.
Let $U$ be an open neighborhood of $p$ in $X_k\setminus X_{k+1}$. 
Then there exist a positive number $\delta<\varepsilon$ and a closed set $E$ of $X_k$ with $p\in E \subset U$ such that 
the map 
\[  B_\delta(\mathfrak{h}|_E) \to X_k, \quad (x, t) \mapsto T^t x \]
is injective and that its image contains an open neighborhood of $p$ in $X_k$.
\end{proposition}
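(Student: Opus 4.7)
The plan is to construct $E$ as a local transversal to the partial orbits of the family $\Psi_x(s):=T^{\rho_x(s)}x$, via a Tietze-type extension of the orbit coordinate. First I apply Lemma \ref{lemma: local trivialization} to shrink $U$ to an open neighborhood $U'\subset U$ of $p$ in $X_k\setminus X_{k+1}$ carrying a continuous $T$-invariant family of linear isometries $\rho_x\colon\mathbb{R}^{n-k}\to\mathfrak{h}_x$. Next I establish uniform local freeness of the orbit maps near $p$: for $x\in X_k\setminus X_{k+1}$, the structure theorem for closed subgroups forces $\mathrm{stab}(x)\cap\mathfrak{h}_x$ to be a discrete subgroup of $\mathfrak{h}_x$ (its identity component lies in $\mathfrak{g}_x\cap\mathfrak{h}_x=\{0\}$), so $m(x):=\inf\{|t|\colon t\in\mathrm{stab}(x)\cap\mathfrak{h}_x\setminus\{0\}\}>0$. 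The key claim is $\liminf_{x\to p}m(x)>0$: otherwise pick $x_n\to p$ in $X_k\setminus X_{k+1}$ and $t_n\in\mathrm{stab}(x_n)\cap\mathfrak{h}_{x_n}$ with $0<|t_n|\to 0$, let $u_\infty\in\mathfrak{h}_p$ be a unit cluster point of $t_n/|t_n|$, and for any $c\in\mathbb{R}$ choose $k_n\in\mathbb{Z}$ with $k_n|t_n|\to c$; then $T^{k_nt_n}x_n=x_n\to p$ and $k_nt_n\to cu_\infty$ yield $T^{cu_\infty}p=p$, so the connected set $\mathbb{R}u_\infty$ sits in $\mathrm{stab}(p)$, hence in $\mathfrak{g}_p$, contradicting $u_\infty\in\mathfrak{h}_p\setminus\{0\}$. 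Thus there exist a closed neighborhood $\bar N\subset U'$ of $p$ and $m_0>0$ with $m(x)\geq m_0$ on $\bar N$; for $\delta_1<\min(\varepsilon/2,m_0/2)$ every $\Psi_x$ is injective on $\bar B_{\delta_1}(\mathbb{R}^{n-k})$.

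I then use the Tietze extension theorem (componentwise) to extend $\Psi_p^{-1}\colon\Psi_p(\bar B_{\delta_1})\to\bar B_{\delta_1}(\mathbb{R}^{n-k})$ to a continuous $\Phi\colon X\to\mathbb{R}^{n-k}$, and set $\Phi_x(s):=\Phi(T^{\rho_x(s)}x)$. Then $\Phi_p=\mathrm{id}$ on $\bar B_{\delta_1}$, and by uniform continuity $\|\Phi_x-\mathrm{id}\|_{C^0(\bar B_{\delta_1/2})}$ can be made arbitrarily small on a sufficiently small closed neighborhood $\bar N'\subset\bar N$ of $p$. I define $E:=\Phi^{-1}(0)\cap\bar N'$ and pick $\delta<\delta_1/4$. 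Injectivity of $(x,t)\mapsto T^tx$ on $B_\delta(\mathfrak{h}|_E)$ reduces, via the $T$-invariance of $\rho$, to showing $\Phi_{x_1}^{-1}(0)\cap\bar B_{2\delta}=\{0\}$ for every $x_1\in E$: if $T^{t_1}x_1=T^{t_2}x_2$ with $(x_i,t_i)\in B_\delta(\mathfrak{h}|_E)$, then $\rho_{x_1}=\rho_{x_2}$ on the common orbit, and $s:=\rho_{x_1}^{-1}(t_1-t_2)\in\bar B_{2\delta}$ satisfies $\Phi_{x_1}(s)=\Phi(x_2)=0$. For the image to contain an open neighborhood of $p$ in $X_k$, given $y\in X_k$ close to $p$ a Brouwer-degree argument yields a zero $s_y\in\bar B_{\delta_1/2}$ of $\Phi_y$; then $x:=T^{\rho_y(s_y)}y\in\Phi^{-1}(0)\cap\bar N'=E$ and $y=T^{-\rho_y(s_y)}x$ lies in the image.

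The main obstacle is the uniqueness step $\Phi_{x_1}^{-1}(0)\cap\bar B_{2\delta}=\{0\}$. A generic $C^0$-small perturbation of the identity map $\bar B_{2\delta}\to\mathbb{R}^{n-k}$ can have several zeros, so uniform closeness $\|\Phi_x-\mathrm{id}\|_\infty<\eta$ alone only confines $\Phi_x^{-1}(0)$ to $\bar B_\eta$. To force a single zero I would combine the uniform injectivity of $\Psi_x$ obtained above with a Brouwer-degree argument applied to $\Phi_{x_1}\colon\bar B_{2\delta}\to\mathbb{R}^{n-k}$ (which is nonzero on $\partial\bar B_{2\delta}$ and homotopic to the identity via small straight-line perturbations, hence of total degree $1$), together with a further shrinking of $\delta$ and $\bar N'$ so that $\Phi_{x_1}$ is injective on scales beyond $2\|\Phi_{x_1}-\mathrm{id}\|_\infty$; once the total degree is $1$ and spurious clusters of zeros are ruled out, the unique remaining zero must be $0$. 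The surjectivity argument then reuses the same degree computation.
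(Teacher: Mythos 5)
Your strategy — choose an invariant family of isometries $\rho_x$, construct a continuous ``transversal coordinate'' map $\Phi\colon X\to\mathbb{R}^{n-k}$ that restricts to the inverse orbit chart at $p$, define $E=\Phi^{-1}(0)$ locally, and then show $\Phi_x(s):=\Phi(T^{\rho_x(s)}x)$ has a unique zero near $0$ — is in spirit exactly what the paper does, and your uniform local freeness lemma ($\liminf_{x\to p}m(x)>0$) is a correct and reasonable preliminary. But there is a genuine gap at the step you yourself flag as ``the main obstacle,'' and the fix you sketch does not close it. A componentwise Tietze extension of $\Psi_p^{-1}$ gives only $C^0$-control: $\Phi_x$ is uniformly close to the identity on $\bar B_{\delta_1/2}$, which confines its zero set to a small ball but does not make that zero set a singleton. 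Degree theory gives you \emph{existence} of a zero (degree $1$ against the boundary), but degree $1$ is perfectly compatible with three zeros of local index $+1,+1,-1$, or with an entire continuum of zeros. The phrase ``spurious clusters of zeros are ruled out'' is not a step; nothing in $C^0$-smallness or in the injectivity of $s\mapsto T^{\rho_x(s)}x$ forces the composition $\Phi\circ(\text{orbit chart of }x)$ to be injective, because the arbitrary Tietze extension $\Phi$ can fold the nearby orbit leaf of $x$ through $0$ many times. In the absence of a smooth structure you cannot pass to a $C^1$ or Lipschitz-inverse argument either.

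What the paper does instead is \emph{build} the transversal coordinate so that it is automatically monotone along orbit directions: for a suitable bump function $h$ supported near $p$, it sets $f_i(x)=\int_{B_r\cap H_i(0)} h(T^{\rho_x(t)}x)\,dt$, where $H_i(c)$ is the half-space $t_i\le c$, and proves (Claim 4.3 in the paper) that $f_i(T^{\rho_x(u)}x)=\int_{B_r\cap H_i(u_i)} h(T^{\rho_x(t)}x)\,dt$, a quantity depending only on the coordinate $u_i$ and strictly increasing in it because $h\equiv 1$ on a slab near the origin. Monotonicity makes the zero of $u\mapsto f(T^{\rho_x(u)}x)-f(p)$ unique by a one-line argument, giving injectivity, and the intermediate value theorem in each coordinate gives the surjectivity (so no degree theory is needed). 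The moral is that a generic continuous extension cannot do what you need; the integral against nested half-spaces is precisely the device that upgrades ``a zero exists'' to ``the zero is unique.'' To repair your proof you would have to replace the Tietze-extended $\Phi$ with a map having this built-in monotonicity, at which point you would be reproducing the paper's construction.
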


\begin{proof}
We can find $0<r<\varepsilon$ with $p\not \in \{T^u p\mid u\in S_r(\mathfrak{h}_p)\}$.
Take a continuous function $h\colon X\to [0,1]$ such that $h=1$ on a neighborhood of $p$ and 
$\supp(h) \cap \{T^t p\mid t\in S_r(\mathfrak{h}_p)\} = \emptyset$.

We take $a\in (0, r)$ and a neighborhood $V$ of $p$ with $V\subset U$ for which we have
   \begin{enumerate}
    \item $h(T^s x) = 1$ for $(x, s)\in B_a\left(\mathfrak{h}|_V\right)$,
    \item $T^t x\not\in \supp(h)$ for any $(x, t)\in \mathfrak{h}|_V$ with $r-a \leq |t| \leq r+a$, 
    \item we can find a linear isometry $\rho_x\colon \mathbb{R}^{n-k}\to \mathfrak{h}_x$ for $x\in V$ such that 
             $\rho_x$ is continuous in $x\in V$ and $T$-invariant (i.e. $\rho_{T^u x} = \rho_x$ if $x, T^u x\in V$).    
   \end{enumerate}

In what follows, we denote $B_r = B_r(\mathbb{R}^{n-k}) = \{t\in \mathbb{R}^{n-k}\mid |t|\leq r\}$.
For $1\leq i \leq n-k$ and $c\in \mathbb{R}$ we also denote 
$H_i(c) = \{(t_1, \dots, t_{n-k})\in \mathbb{R}^{n-k}\mid t_i \leq c\}$ (a half space of $\mathbb{R}^{n-k}$).

We define $f = (f_1, \dots, f_{n-k})\colon V\to \mathbb{R}^{n-k}$ by 
\[ f_i(x) = \int_{B_r\cap H_i(0)} h\left(T^{\rho_x(t)}x\right)\, dt, \quad (1\leq i \leq n-k). \]
Here $dt$ denotes the standard Lebesgue measure on $\mathbb{R}^{n-k}$.

\begin{claim} \label{claim: f_i depends only on u_i}
Let $x\in V$ and $u = (u_1, \dots, u_{n-k}) \in B_a\left(\mathbb{R}^{n-k}\right)$ with $T^{\rho_x(u)}x\in V$. Then
\[ f_i\left(T^{\rho_x(u)}x\right) = \int_{B_r\cap H_i(u_i)} h\left(T^{\rho_x(t)}x\right)\, dt. \]
\end{claim}

\begin{proof}
Since $\rho_{T^{\rho_x(u)}x} = \rho_x$, we have
\begin{align*}
   f_i(T^{\rho_x(u)}x) &= \int_{B_r\cap H_i(0)} h\left(T^{\rho_x(t)}\cdot T^{\rho_x(u)}x\right)\, dt \\
    & = \int_{B_r\cap H_i(0)} h\left(T^{\rho_x(t+u)}x\right)\, dt \\
    & = \int_{u+B_r\cap H_i(0)} h\left(T^{\rho_x(t)}x\right)\, dt. 
\end{align*}    
We notice that the symmetric difference between $u+B_r\cap H_i(0)$ and $B_r\cap H_i(u_i)$ is contained in 
$\{t\in \mathbb{R}^{n-k}\mid r-a \leq |t| \leq r+a\}$.
Indeed, if $t\in \left(u+B_r\cap H_i(0)\right) \setminus \left(B_r\cap H_i(u_i)\right)$, then 
$|t|\leq |u| + r \leq a+r$ and $t_i \leq u_i$. 
We also have $|t|>r$ since $t\not \in B_r\cap H_i(u_i)$.
On the other hand, if $t\in \left(B_r\cap H_i(u_i)\right)\setminus \left(u+B_r\cap H_i(0)\right)$ then 
$|t|\leq r$ and $|t|>r-a$ (otherwise, $|t-u|\leq r$ and hence $t-u\in B_r\cap H_i(0)$).

If $r-a\leq |t|\leq r+a$ then $r-a\leq |\rho_x(t)|\leq r+a$ and hence $h\left(T^{\rho_x(t)}x\right) = 0$
by the condition (2).
Therefore 
\[ \int_{u+B_r\cap H_i(0)} h\left(T^{\rho_x(t)}x\right)\, dt = \int_{B_r\cap H_i(u_i)} h\left(T^{\rho_x(t)}x\right)\, dt. \]
\end{proof}

We choose $\delta \in \left(0, \frac{a}{2}\right)$ and a closed neighborhood $A$ of $p$ in $X_k$ such that $A\subset V$ 
and that $T^t x\in V$ for all $(x, t) \in B_{2\delta}\left(\mathfrak{h}|_A\right)$.
Then by Claim \ref{claim: f_i depends only on u_i}, for any $x\in A$ and $u\in B_{2\delta}(\mathbb{R}^{n-k})$
\[ f_i\left(T^{\rho_x(u)}x\right) = \int_{B_r\cap H_i(u_i)} h\left(T^{\rho_x(t)}x\right)\, dt. \]
This formula implies that, for each fixed $x\in A$, the value of the function $f_i\left(T^{\rho_x(u)}x\right)$
(over the domain $|u|\leq 2\delta$) depends only on the coordinate $u_i$.
Moreover, since $h(T^{\rho_x(t)}x) = 1$ on $t \in B_a\cap H_i(u_i)$, the function 
$f_i\left(T^{\rho_x(u)}x\right)$ is strictly monotone increasing in $u_i$.

We define $E = \{x\in A\mid f(x) = f(p)\}$. This is a closed subset of $X_k$ with $p\in E$.
We consider the map 
\begin{equation} \label{eq: flow box in the proof of local section}
 B_{\delta}\left(\mathfrak{h}|_E\right) \to X_k, \quad (x, t) \mapsto T^t x.  
\end{equation}
This map is injective. Indeed, suppose that $(x, t), (x^\prime, t^\prime)\in B_\delta(\mathfrak{h}|_E)$ satisfy 
$T^t x = T^{t^\prime} x^\prime$. 
Notice that this implies $\mathfrak{h}_x = \mathfrak{h}_{T^t x} = \mathfrak{h}_{T^{t^\prime} x^\prime} = 
\mathfrak{h}_{x^\prime}$.
We have $x^\prime = T^{t-t^\prime}x$.
Set $t-t^\prime = \rho_x(u)$ $(u\in \mathbb{R}^{n-k})$. We have $|u| = |t-t^\prime| \leq 2\delta$.
We want to show $u=0$. If not, we have $u_i\neq 0$ for some $1\leq i \leq n-k$.
If $u_i>0$ then (by the monotonicity of $f_i\left(T^{\rho_x(u)}x\right)$ in $u_i$)
\[ f_i(p) = f_i(x^\prime) = f_i\left(T^{\rho_x(u)}x\right) > f_i(x)  = f_i(p). \]
This is a contradiction. If $u_i < 0$ then 
\[  f_i(p) = f_i(x^\prime) = f_i\left(T^{\rho_x(u)}x\right) < f_i(x)  = f_i(p). \]
This is also a contradiction. Therefore the map (\ref{eq: flow box in the proof of local section}) is injective.

Next we claim that the image of the map (\ref{eq: flow box in the proof of local section}) contains an open neighborhood of $p$.
Take $0<\delta^\prime < \frac{\delta}{\sqrt{n-k}}$.
Let $e_1, \dots, e_{n-k}$ be the standard basis of $\mathbb{R}^{n-k}$.
For every $1\leq i \leq n-k$
\[ f_i\left(T^{-\rho_p(\delta^\prime e_i)}p\right) < f_i(p) < f_i\left(T^{\rho_p(\delta^\prime e_i)}p\right). \]
We can take a small neighborhood $A^\prime$ of $p$ in $X_k$ such that $A^\prime\subset A$ and that for every $x\in A^\prime$
\[  f_i\left(T^{-\rho_x(\delta^\prime e_i)}x\right) < f_i(p) < f_i\left(T^{\rho_x(\delta^\prime e_i)}x\right). \]
It follows from the mean value theorem that there is $u_i\in (-\delta^\prime, \delta^\prime)$ (depending on $x\in A^\prime$)
satisfying 
\[  f_i\left(T^{\rho_x(u_i e_i)} x\right) = f_i(p). \]
Set $u = (u_1, \dots, u_{n-k}) \in B_\delta(\mathbb{R}^{n-k})$. 
We have 
$f_i\left(T^{\rho_x(u)}x\right) = f_i(p)$ for all $1\leq i \leq n-k$,
and hence $f\left(T^{\rho_x(u)}x\right) = f(p)$.
We can assume $T^{\rho_x(u)}x \in A$ by choosing $\delta^\prime$ and $A^\prime$ sufficiently small.
Then $T^{\rho_x(u)}x \in E$.
Set $y = T^{\rho_x(u)} x$. We have $\rho_y = \rho_x$.
Therefore $x = T^{-\rho_y(u)}y$ is contained in the image of the map (\ref{eq: flow box in the proof of local section}).
This implies that the image of the map (\ref{eq: flow box in the proof of local section}) contains $A^\prime$.
\end{proof}

\section{Proofs of Propositions \ref{proposition: C_k(A) is open and dense} and \ref{proposition: C_k(B, B^prime) is open and dense}}
\label{section: proof of main propositions}

Here we prove Propositions \ref{proposition: C_k(A) is open and dense} and \ref{proposition: C_k(B, B^prime) is open and dense}
and complete the proof of Theorem \ref{theorem: main theorem}.

Let $X$ be a compact metrizable space and $T\colon \mathbb{R}^n\times X\to X$ a continuous action of 
$\mathbb{R}^n$ on it.
We use the notations ($\mathfrak{g}, \mathfrak{h}, X_k$ etc.) introduced in \S \ref{section: local section}.
Recall that $X_n = \fix(X, T) = \{x\in X\mid T^t x = x \, (\forall t\in \mathbb{R}^n)\}$.

Let $I=[0,1]$ be the unit interval, and let $Y:= \lip_1(\mathbb{R}^n, I)$ be the space of one-Lipschitz maps from 
$\mathbb{R}^n$ to $I$. We define a metric $\mathbf{d}$ on it by 
\[ \mathbf{d}(\varphi, \psi) = \sup_{m \geq 1} \left(2^{-m} \sup_{|t|\leq m} |\varphi(t)-\psi(t)|\right), \quad 
    \left(\varphi, \psi\in \lip_1(\mathbb{R}^n, I)\right). \]
$\mathbb{R}^n$ continuously acts on $Y$ by the shift:
\[ \sigma\colon \mathbb{R}^n\times Y\to Y, \quad  \left(u, \varphi(t)\right) \mapsto \varphi(t+u). \]
For $0\leq k \leq n$, we define $Y_k$ as the space of $\varphi\in Y$ such that there exists a $k$-dimensional linear subspace
$V\subset \mathbb{R}^n$ for which we have $\varphi(t+u) = \varphi(t)$ for all $t\in \mathbb{R}^n$ and $u\in V$.
We also set $Y_{n+1} = \emptyset$.
The space $Y_n$ consists of constant maps from $\mathbb{R}^n$ to $I$, 
and it can be identified with $I$.

\textbf{Throughout this section we assume that there exists a topological embedding 
$\iota \colon X_n\to Y_n$.}

We define $\mathcal{C}$ as the space of $\mathbb{R}^n$-equivariant continuous maps $f\colon X\to Y$ that satisfy
$f(x) = \iota(x)$ for all $x\in X_n$.
We define a metric $D$ on it by 
\[ D(f, g) = \max_{x\in X} \mathbf{d}(f(x), g(x)), \quad (f, g\in \mathcal{C}). \]
The space $\mathcal{C}$ becomes a complete metric space with respect to $D$.

\begin{lemma} \label{lemma: C is not empty}
The space $\mathcal{C}$ is not empty. 
Moreover, for any $\delta>0$ there exists $f\in \mathcal{C}$ such that 
$f(x) \in \lip_\delta(\mathbb{R}^n, I)$ for all $x\in X$.
\end{lemma}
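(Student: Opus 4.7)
The plan is to combine the Tietze extension theorem with the Lipschitz filter constructed in Proposition \ref{proposition: Lipschitz filter}. Since $Y_n$ consists of constant functions and is canonically homeomorphic to $I$, the given topological embedding $\iota\colon X_n \to Y_n$ corresponds to a continuous map $\iota_0\colon X_n \to I$ (namely, $\iota_0(x)$ is the constant value of the function $\iota(x)$). Note that $X_n = \fix(X,T)$ is a closed subset of the compact metrizable (hence normal) space $X$.

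First, I would apply the Tietze extension theorem to $\iota_0$, obtaining a continuous function $\alpha\colon X \to I$ with $\alpha|_{X_n} = \iota_0$. From this I define an $\mathbb{R}^n$-equivariant continuous map $\tilde{\alpha}\colon X \to C(\mathbb{R}^n, I)$ by
\[
\tilde{\alpha}(x)(t) = \alpha(T^t x).
\]
Equivariance is immediate, and continuity (in the compact-open topology) follows from the joint continuity of $T$. For $x \in X_n$, we have $T^t x = x$ for all $t \in \mathbb{R}^n$, so $\tilde{\alpha}(x)$ is the constant function with value $\alpha(x) = \iota_0(x)$, i.e.\ $\tilde{\alpha}(x) = \iota(x) \in Y_n$.

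Next, given $\delta > 0$, I would invoke Proposition \ref{proposition: Lipschitz filter} with any choice of parameters $0 < c < c^\prime = \delta$ and any $\varepsilon > 0$ (say $\varepsilon = 1$), to obtain an $\mathbb{R}^n$-equivariant continuous map $F\colon C(\mathbb{R}^n, I) \to \lip_{\delta}(\mathbb{R}^n, I)$ satisfying, in particular, condition (2) of that proposition: $F(\varphi) = \varphi$ whenever $\varphi$ is a constant function. Define
\[
f\colon X \to \lip_{\delta}(\mathbb{R}^n, I), \qquad f(x) = F\bigl(\tilde{\alpha}(x)\bigr).
\]
Then $f$ is $\mathbb{R}^n$-equivariant and continuous as a composition of such maps, and $f(x) \in \lip_\delta(\mathbb{R}^n, I) \subset Y$ for every $x \in X$.

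Finally I would verify the boundary condition: for $x \in X_n$, $\tilde{\alpha}(x)$ is a constant function, so $f(x) = F(\tilde{\alpha}(x)) = \tilde{\alpha}(x) = \iota(x)$. Hence $f \in \mathcal{C}$, which simultaneously shows $\mathcal{C} \neq \emptyset$ and supplies the map with $\lip_\delta$ values claimed in the second assertion. Nothing in this argument is genuinely difficult; the only mildly subtle point is recognizing that property (2) of the filter $F$ is exactly what is needed to preserve the prescribed values on $X_n$, so that no further adjustment is required.
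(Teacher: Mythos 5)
Your proposal follows essentially the same route as the paper's own proof: extend $\iota_0 = \iota(\cdot)(0)$ by Tietze, generate an equivariant map into $C(\mathbb{R}^n,I)$ via $x\mapsto (t\mapsto \alpha(T^t x))$, then post-compose with the filter $F$ from Proposition~\ref{proposition: Lipschitz filter} and use property (2) to fix the values on $X_n$. The one tiny oversight is the line ``$f(x)\in \lip_\delta(\mathbb{R}^n,I)\subset Y$'': this inclusion only holds for $\delta\le 1$, so (as the paper does) you should first reduce to the case $0<\delta<1$, which is harmless since $\lip_{\delta'}\subset\lip_\delta$ for $\delta'\le\delta$.
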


\begin{proof}
We define $\iota_0\colon X_n\to I$ by $\iota_0(x) = \iota(x)(0)$.
By the Tietze extension theorem, it can be extended to a continuous function 
$\iota_0 \colon X\to I$.
We define an equivariant continuous map $\iota \colon X\to C(\mathbb{R}^n, I)$ by 
$\iota(x)(t) = \iota_0(T^t x)$ $(t\in \mathbb{R}^n)$, which is an extension of $\iota \colon X_n\to Y_n$.

We can assume $0<\delta < 1$.
By Proposition \ref{proposition: Lipschitz filter}, there is an equivariant continuous map 
$F\colon C(\mathbb{R}^n, I) \to  \lip_{\delta}(\mathbb{R}^n, I)$ such that 
$F(\varphi) = \varphi$ for all constant functions $\varphi \in C(\mathbb{R}^n, I)$.
We define $f\colon X\to \lip_\delta(\mathbb{R}^n, I)$ by 
$f(x) = F(\iota(x))$ for $x\in X$.
For $x\in X_n$, the function $\iota(x)$ is constant and hence $f(x) = F(\iota(x))  = \iota(x)$.
So $f\in \mathcal{C}$.
\end{proof}

Let $0\leq k \leq n-1$.
For a closed subset $A$ of $X_k$ with $A\cap X_{k+1} =\emptyset$, we define 
\[ \mathcal{C}_k(A) = \{f\in \mathcal{C}\mid f(A) \cap Y_{k+1} = \emptyset\}. \]
This is an open subset of $\mathcal{C}$.

\begin{proposition}[$=$ Proposition \ref{proposition: C_k(A) is open and dense}]
Let $0\leq k \leq n-1$ and $p\in X_k\setminus X_{k+1}$.
There exists a closed neighborhood $A$ of $p$ in $X_k$ with $A\cap X_{k+1} = \emptyset$ such that 
$\mathcal{C}_k(A)$ is open and dense in $\mathcal{C}$.
\end{proposition}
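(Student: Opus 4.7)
The plan is to build $A$ from the local section at $p$ provided by Proposition \ref{prop: local section} and then establish density of $\mathcal{C}_k(A)$ in three stages: reduce to Lipschitz constants strictly below $1$, construct an equivariant local modification $g$ on $X_k$ whose values on $A$ avoid $Y_{k+1}$, and extend equivariantly to $X$ via the filter apparatus of \S\ref{section: Lipschitz filter and applications}.

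For the setup, first apply Lemma \ref{lemma: local trivialization} to get an open neighborhood $U$ of $p$ in $X_k\setminus X_{k+1}$ carrying continuous $T$-invariant isometries $\rho_x\colon\mathbb{R}^{n-k}\to\mathfrak{h}_x$, then invoke Proposition \ref{prop: local section} to produce $\delta>0$ and a compact local section $E\subset U$ containing $p$ on which $\Phi\colon B_\delta(\mathfrak{h}|_E)\to X_k$, $(x,t)\mapsto T^tx$, is a homeomorphism onto an open neighborhood of $p$. Fix $r\in(0,\delta/2)$ and a closed-in-$E$ neighborhood $E_0$ of $p$, and set $A:=\Phi(B_r(\mathfrak{h}|_{E_0}))$. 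Then $A$ is compact, $A\cap X_{k+1}=\emptyset$ (since $X_k\setminus X_{k+1}$ is $T$-invariant), and $p$ lies in the interior of $A$; openness of $\mathcal{C}_k(A)$ in $\mathcal{C}$ is immediate from compactness of $A$ and closedness of $Y_{k+1}$ in $Y$, the latter being a consequence of compactness of the Grassmannian $\gr(n,k+1)$.

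For density, fix $f\in\mathcal{C}$ and $\varepsilon>0$. First replace $f$ by $\tilde f=(1-\eta)f+\eta f_0\in\mathcal{C}$, where $f_0\in\mathcal{C}$ has small Lipschitz constant (via Lemma \ref{lemma: C is not empty}) and $\eta>0$ is small; this preserves equivariance and fixed-point values, satisfies $D(f,\tilde f)\leq\eta$, and places $\tilde f(X)$ inside $\lip_c(\mathbb{R}^n,I)$ for some $c<1$. Fix $c<c'<1$ and build an equivariant continuous $g\colon X_k\to\lip_{c'}(\mathbb{R}^n,I)$ close to $\tilde f|_{X_k}$ with $g(A)\cap Y_{k+1}=\emptyset$ as follows: for each $\xi\in E$, transport $\tilde f(\xi)|_{\mathfrak h_\xi}$ via $\rho_\xi$ to $\tilde f_\xi\in\lip_c(B_r(\mathbb{R}^{n-k}),I)$, apply Proposition \ref{prop: local perturbation map, no translation invariance} (in dimension $n-k$) to produce $\tilde g_\xi^{\mathrm{full}}=G(\tilde f_\xi)$ which agrees with $\tilde f_\xi$ on $\partial B_r$ and admits no translation invariance, then interpolate through a continuous cutoff $\chi\colon X\to[0,1]$ equal to $1$ near $E_0$ and vanishing on $X\setminus E$ to form $\tilde g_\xi=(1-\chi(\xi))\tilde f_\xi+\chi(\xi)\tilde g_\xi^{\mathrm{full}}$. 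The injectivity of $\Phi$ on $B_\delta(\mathfrak{h}|_E)$ forces $H(y):=\{t\in\mathfrak{h}_y:T^ty\in E\}$ to be $\delta$-separated for every $y\in X_k$, so with $r<\delta/2$ the balls $\{B_r(t):t\in H(y)\}$ are disjoint; the prescription $g(y)(u)=\tilde g_{T^ty}(\rho_{T^ty}^{-1}(u_{\mathfrak h}-t))$ for $u_{\mathfrak h}\in B_r(t)$, $g(y)(u)=\tilde f(y)(u)$ otherwise, is therefore unambiguous and $c'$-Lipschitz (using $\mathfrak g_y$-invariance in the extension to $\mathbb{R}^n$), while the identity $H(T^sy)=H(y)-s_{\mathfrak h}$ together with $T$-invariance of $\rho$ delivers equivariance.

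Finally, Proposition \ref{prop: extension and filter} applied to the $T$-invariant closed set $X_k$, to $\tilde f$ and $g$ (both viewed in $\lip_{c'}$), with target $\lip_1$, yields an equivariant $h\colon X\to\lip_1(\mathbb{R}^n,I)$ with $D(f,h)<\varepsilon$, $\sup_{X_k}\mathbf{d}(h,g)$ arbitrarily small, and $h|_{X_n}=g|_{X_n}=\iota$, so $h\in\mathcal{C}\cap\mathcal{C}_k(A)$. The main obstacle is the core claim $g(y)\notin Y_{k+1}$ for $y\in A$: since equivariance already forces $g(y)$ to be $\mathfrak{g}_y$-invariant, any $(k+1)$-dimensional invariance subspace contains a nonzero $v\in\mathfrak{h}_y$, and the argument splits into a small-$|v|$ case, where the induced translation invariance of $\tilde g_{T^{-t}y}^{\mathrm{full}}=G(\tilde f_{T^{-t}y})$ along the line $\rho_{T^{-t}y}^{-1}(\mathbb{R}v)$ in $B_r(\mathbb{R}^{n-k})$ contradicts clause (3) of Proposition \ref{prop: local perturbation map, no translation invariance}, and a large-$|v|$ case, where the high local Lipschitz constant of $g(y)$ at the full-bump center $-t\in H(y)$ with $T^{-t}y\in E_0$ cannot be matched at $-t+\alpha v$ by the $c$-Lipschitz function $\tilde f(y)$; arranging $E_0$ small enough (exploiting the $\delta$-separation of the full-bump locus) so that $-t$ is the unique full-bump location on the line $-t+\mathbb{R}v\subset\mathfrak{h}_y$ is the delicate step.
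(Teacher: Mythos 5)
Your overall strategy matches the paper's proof exactly: build $A$ from a local section furnished by Proposition~\ref{prop: local section}, shrink the Lipschitz constant by convex combination with a map from Lemma~\ref{lemma: C is not empty}, insert a local perturbation from Proposition~\ref{prop: local perturbation map, no translation invariance} along each point of $H(y)$ via the Rokhlin-tower style gluing, and then extend from $X_k$ to $X$ with Proposition~\ref{prop: extension and filter}. The construction of $A$ and $g$, the equivariance bookkeeping via $H(T^s y)=H(y)-s_{\mathfrak h}$ and the $T$-invariance of $\rho$, and the final appeal to the filter are all in line with what the paper does.

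The one genuine flaw is in your treatment of the core claim $g(y)\notin Y_{k+1}$. You correctly observe that any $(k+1)$-dimensional invariance subspace $W$ for $g(y)$ must meet $\mathfrak{h}_y$ in a nonzero vector $v$ (by dimension count, and because equivariance already yields $\mathfrak{g}_y$-invariance). But you then split into a ``small-$|v|$'' case and a ``large-$|v|$'' case, and you flag the latter as the delicate, unresolved step. This split is vacuous: membership in $Y_{k+1}$ means $g(y)$ is invariant under the entire \emph{linear subspace} $W$, hence under every scalar multiple $\lambda v$. So for any two points of the full-bump ball $s+B_{\delta/2}(\mathfrak{h}_y)$ whose difference lies on the line $\mathbb{R}v$ (and such pairs always exist no matter how $v$ is normalized), invariance forces equal values. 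That is precisely what clause~(3) of Proposition~\ref{prop: local perturbation map, no translation invariance}, applied to $f_2(T^s y)$ with the line $L=\rho_{T^s y}^{-1}(\mathbb{R}v)$, rules out. Your ``small-$|v|$'' argument is already the whole proof; there is no separate large-$|v|$ configuration, and the auxiliary Lipschitz-constant comparison and the ``unique full-bump location on the line'' device are not needed. You should delete that branch and recognize the direct contradiction as complete.
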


\begin{proof}
We take an open neighborhood $U$ of $p$ in $X_k$ with $U\cap X_{k+1}= \emptyset$ such that 
we can choose a linear isometry $\rho_x\colon \mathbb{R}^{n-k}\to \mathfrak{h}_x$ for each $x\in U$ which is 
continuous in $x\in U$ and $T$-invariant (i.e. $\rho_x = \rho_{T^t x}$ if $x, T^t x\in U$).
By Proposition \ref{prop: local section} we can find $\delta>0$ and a closed subset $E$ of $X_k$ with $E\cap X_{k+1}=\emptyset$
such that 
\begin{itemize}
   \item the map 
   \begin{equation} \label{eq: flow box in the proof of main prop 1}
     B_\delta(\mathfrak{h}|_E) \to X_k, \quad (x, t) \mapsto T^t x
   \end{equation}
    is injective,
    \item the image of the map (\ref{eq: flow box in the proof of main prop 1}) is contained in $U$,
    \item the image of the map (\ref{eq: flow box in the proof of main prop 1}) contains an open neighborhood $V$ of $p$ in $X_k$.
\end{itemize}

For $x\in X_k$ we set $H(x) = \{t\in \mathfrak{h}_x\mid T^t x\in E\}$.
We have $|t-u|>\delta$ for any two distinct $t, u\in H(x)$.

We choose a closed subset $A_0$ of $X_k$ such that $p\in A_0\subset E\cap V$ and that a set $A$ defined by
\[ A := \{T^t x\mid x\in A_0, t\in B_\delta(\mathfrak{h}_x)\} \]
is a closed neighborhood of $p$ in $X_k$.
Here we have denoted $B_\delta(\mathfrak{h}_x) = \{t\in \mathfrak{h}_x\mid |t|\leq \delta\}$.
See Figure \ref{figure: local section}.
Since $A\subset U$, we have $A\cap X_{k+1}=\emptyset$.
It is immediate to see that $\mathcal{C}_k(A)$ is open in $\mathcal{C}$.
We are going to prove that it is dense in $\mathcal{C}$.

\begin{figure}[h] 
    \centering
    \includegraphics[width=3.0in]{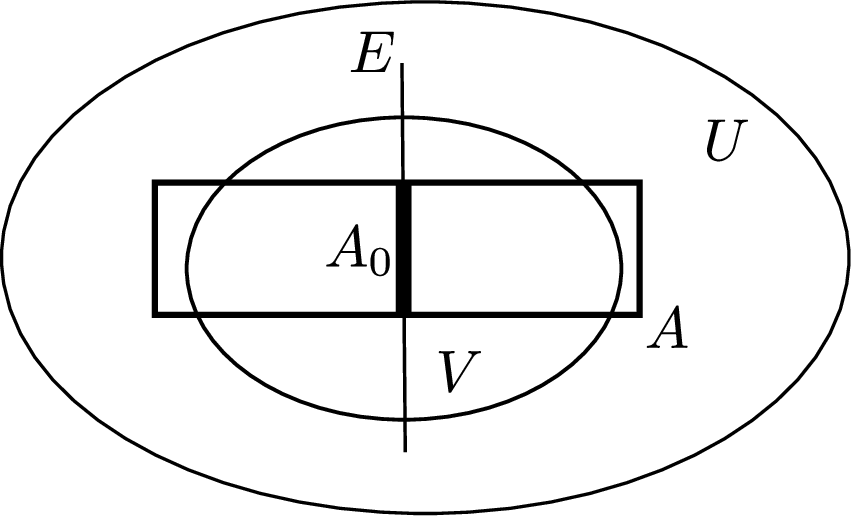}
    \caption{The schematic picture of the local section. The line segment in the center represents the local section $E$.
    The thick line segment inside $E$ represents the set $A_0$. The set $A_0$ contains the point $p$. 
    The rectangle represents the “flow box” $A$.}
     \label{figure: local section}
\end{figure}

We take a continuous function $\chi\colon E\to [0,1]$ such that $\chi(t) = 1$ for all $t\in A_0$ and 
$\supp(\chi) \subset E\cap V$.

Suppose we are given arbitrary $f\in \mathcal{C}$ and $0<\varepsilon<1$.
By Lemma \ref{lemma: C is not empty}, there is an equivariant continuous map 
$f_0\colon X\to \lip_{\frac{1}{2}}(\mathbb{R}^n, I)$ with $f_0(x) = \iota(x)$ for all $x\in X_n$.
We define an equivariant continuous map $f_1\colon X\to \lip_{1-\frac{\varepsilon}{2}}(\mathbb{R}^n, I)$ by 
$f_1(x)(t) = (1-\varepsilon)f(x)(t) + \varepsilon f_0(x)(t)$ for $x\in X$ and $t\in \mathbb{R}^n$.
We have $f_1\in \mathcal{C}$ and $|f_1(x)(t)-f(x)(t)| \leq 2 \varepsilon$.

By Proposition \ref{prop: local perturbation map, no translation invariance} we can take an equivariant continuous map 
\[ G\colon \lip_{1-\frac{\varepsilon}{2}}\left(B_{\frac{\delta}{2}}(\mathbb{R}^{n-k}), I\right) \to 
    \lip_{1-\frac{\varepsilon}{4}}\left(B_{\frac{\delta}{2}}(\mathbb{R}^{n-k}), I\right) \]
such that for all $\varphi\in  \lip_{1-\frac{\varepsilon}{2}}\left(B_{\frac{\delta}{2}}(\mathbb{R}^{n-k}), I\right)$
 \begin{itemize}
    \item $\left|G(\varphi)(t)-\varphi(t)\right| < \varepsilon$ for $t\in B_{\frac{\delta}{2}}(\mathbb{R}^{n-k})$,
    \item $G(\varphi)(t) = \varphi(t)$ for $t\in \partial B_{\frac{\delta}{2}}(\mathbb{R}^{n-k})$,
    \item $G(\varphi)$ has no translation invariance, that is, for any line $L\subset \mathbb{R}^{n-k}$ through the origin, 
    there are distinct $t, u\in B_{\frac{\delta}{2}}(\mathbb{R}^{n-k})$ with $t-u\in L$ and $G(\varphi)(t) \neq G(\varphi)(u)$.
 \end{itemize}

For each $x\in U$, we consider a map 
\[ f_1(x)\circ \rho_x\colon B_{\frac{\delta}{2}}(\mathbb{R}^{n-k})\to I, \quad t \mapsto f_1(x)\left(\rho_x(t)\right). \]
This map belongs to $\lip_{1-\frac{\varepsilon}{2}}\left(B_{\frac{\delta}{2}}(\mathbb{R}^{n-k}), I\right)$.
For $t\in B_{\frac{\delta}{2}}(\mathfrak{h}_x) = \{t\in \mathfrak{h}_x\mid |t|\leq \frac{\delta}{2}\}$,
we set $f_2(x)(t) = G\left(f_1(x)\circ \rho_x\right)(\rho_x^{-1}(t))$.
For any $x\in U$ we have $f_2(x)\in \lip_{1-\frac{\varepsilon}{4}}\left(B_{\frac{\delta}{2}}(\mathfrak{h}_x), I\right)$ and it satisfies
\begin{enumerate}
   \item[(i)] $\left|f_2(x)(t) - f_1(x)(t)\right| < \varepsilon$ for $t \in B_{\frac{\delta}{2}}(\mathfrak{h}_x)$,
   \item[(ii)] $f_2(x)(t) = f_1(x)(t)$ for $t\in \partial  B_{\frac{\delta}{2}}(\mathfrak{h}_x)$,
   \item[(iii)] $f_2(x)$ has no translation invariance, that is, for any line $L\subset \mathfrak{h}_x$ through the origin, there are two distinct 
                  points $t, u\in B_{\frac{\delta}{2}}(\mathfrak{h}_x)$ with $t-u\in L$ and $f_2(x)(t) \neq f_2(x)(u)$.
\end{enumerate}

Let $x\in X_k$. We define $g(x)\in \lip_{1-\frac{\varepsilon}{4}}(\mathbb{R}^n, I)$ in the following two steps.
  \begin{enumerate}
     \item[(a)] For $t\in \mathfrak{h}_x\setminus \bigcup_{s\in H(x)} \left(s + B_{\frac{\delta}{2}}(\mathfrak{h}_x)\right)$ and $u\in \mathfrak{g}_x$
                    we set $g(x)(t+u) = f_1(x)(t+u)$ (which is equal to $f_1(x)(t)$). In particular, if $H(x) =\emptyset $ then $g(x) = f_1(x)$.
     \item[(b)] Let $s\in H(x)$. For $t\in s + B_{\frac{\delta}{2}}(\mathfrak{h}_x)$ and $u\in \mathfrak{g}_x$ we define 
     \begin{align*}
       g(x)(t+u) & = g(x)(t) \\
                    & = \left(1-\chi(T^s x)\right) f_1(x)(t) + \chi(T^s x)f_2(T^s x)(t-s)  \\
                    & = f_1(x)(t) + \chi(T^s x)\left(f_2(T^s x)(t-s) - f_1(T^s x)(t-s)\right).
     \end{align*}  
     Here we have used $f_1(T^s x)(t-s) = f_1(x)(t)$. Notice that we have $g(x)(t+u) = f_1(x)(t)$ 
     for $t\in s + \partial  B_{\frac{\delta}{2}}(\mathfrak{h}_x)$ and $u\in \mathfrak{g}_x$ by the above condition (ii).
     Therefore these two steps (a) and (b) are compatible.
  \end{enumerate}

We have $\left|g(x)(t) - f_1(x)(t)\right| < \varepsilon$ for all $t\in \mathbb{R}^n$.
If $x\in A$, then there is $s\in H(x)$ with $T^s x\in A_0$ ($\Rightarrow \chi(T^s x) = 1$).
We have $g(x)(t) = f_2(T^s x)(t-s)$ for $t\in s + B_{\frac{\delta}{2}}(\mathfrak{h}_x)$.
Then it follows from (iii) that $g(x)\not \in Y_{k+1}$.

For $x\in X_{k+1}$ we have $H(x) = \emptyset$ because $X_{k+1}\cap U = \emptyset$.
Then $g(x) = f_1(x)$. In particular we have $g(x) = \iota(x)$ for $x\in X_n$.

Therefore we have constructed an equivariant continuous map 
$g\colon X_k\to \lip_{1-\frac{\varepsilon}{4}}(\mathbb{R}^n, I)$ such that 
  \begin{itemize}
    \item $|g(x)(t) - f_1(x)(t)| < \varepsilon$ for all $x\in X_k$ and $t\in \mathbb{R}^n$, 
    \item $g(A)\cap Y_{k+1}=\emptyset$,
    \item $g(x) = \iota(x)$ for $x\in X_n$.  
  \end{itemize}
From the second condition we can take $\varepsilon^\prime>0$ for which we have 
$\mathbf{d}\left(g(x), \psi\right) > \varepsilon^\prime$ for any $x\in A$ and $\psi \in Y_{k+1}$.
(Recall that $\mathbf{d}$ is the metric on $Y$ defined by 
 $\mathbf{d}(\varphi, \psi) = \sup_{m \geq 1} \left(2^{-m}\sup_{|t|\leq m} |\varphi(t)-\psi(t)|\right)$.)

By Proposition \ref{prop: extension and filter}, we can find an equivariant continuous map 
$h\colon X \to \lip_1 (\mathbb{R}^n, I)$ that satisfies the following three conditions.
  \begin{itemize}
   \item $|h(x)(t) - f_1(x)(t)| < \varepsilon$ for $x\in X$ and $t\in \mathbb{R}^n$.
   \item $|h(x)(t) - g(x)(t)| < \varepsilon^\prime$ for $x\in X_k$ and $t\in \mathbb{R}^n$.
            This condition implies that $h(x)\not\in Y_{k+1}$ for all $x\in A$.
   \item $h(x) = g(x) = \iota(x)$ for $x\in X_n$. This implies that $h\in \mathcal{C}$.  
  \end{itemize} 
We have $h\in \mathcal{C}_k(A)$.
For $x\in X$ and $t\in \mathbb{R}^n$
\[ |h(x)(t)-f(x)(t)| \leq |h(x)(t)-f_1(x)(t)| + |f_1(x)(t) - f(x)(t)| < 3\varepsilon. \]
Since $f\in \mathcal{C}$ and $\varepsilon \in (0, 1)$ are arbitrary, this has shown that $\mathcal{C}_k(A)$ is dense in 
$\mathcal{C}$. 
\end{proof}

Let $0\leq k \leq n-1$.
We define $\tilde{X}_k$ as the set of $(x, y)\in (X_k\setminus X_{k+1})\times (X_k\setminus X_{k+1})$ that satisfies 
$x\neq y$ and $\mathfrak{g}_x = \mathfrak{g}_y$.

\begin{lemma} \label{lemma: local trivialization revisited}
Let $(p_1, p_2)\in \tilde{X}_k$. There is an open neighborhood $U_i$ of $p_i$ in $X_k \setminus X_{k+1}$ $(i=1,2)$ for which the following 
statement holds. We can take a linear isometry $\rho_x\colon \mathbb{R}^{n-k} \to \mathfrak{h}_x$ for each $x\in U_1\cup U_2$ 
such that $\rho_x$ is continuous in $x\in U_1\cup U_2$ and that $\rho_x = \rho_y$ for any $x, y\in U_1\cup U_2$ with 
$\mathfrak{g}_x = \mathfrak{g}_y$ (in particular, if $x, T^u x\in U_1\cup U_2$ for some $u \in \mathbb{R}^n$ then $\rho_x = \rho_{T^u x}$).
\end{lemma}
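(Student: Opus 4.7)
The plan is to mimic the Gram--Schmidt construction from Lemma \ref{lemma: local trivialization}, but starting from a \emph{single} orthonormal basis chosen once and for all in the common subspace $\mathfrak{h}_{p_1} = \mathfrak{h}_{p_2}$. This one change is the key device that will force $\rho_x$ to depend only on $\mathfrak{h}_x$ (equivalently on $\mathfrak{g}_x$) and not on $x$ itself, which is exactly what the lemma demands.

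First I would use the hypothesis $\mathfrak{g}_{p_1} = \mathfrak{g}_{p_2}$ to conclude $\mathfrak{h}_{p_1} = \mathfrak{h}_{p_2}$, and fix an orthonormal basis $e_1, \ldots, e_{n-k}$ of this common $(n-k)$-dimensional subspace. For each $x \in X_k \setminus X_{k+1}$ let $\pi_x \colon \mathbb{R}^n \to \mathfrak{h}_x$ be the orthogonal projection; the map $x \mapsto \pi_x$ is continuous since $x \mapsto \mathfrak{h}_x$ is, and, crucially, $\pi_x$ is determined entirely by the subspace $\mathfrak{h}_x$.

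Because $\pi_{p_i}(e_j) = e_j$ for $i = 1, 2$ and $j = 1, \ldots, n-k$, continuity gives open neighborhoods $U_i$ of $p_i$ in $X_k \setminus X_{k+1}$ on which the vectors $\pi_x(e_1), \ldots, \pi_x(e_{n-k})$ remain linearly independent, hence form a (not necessarily orthogonal) basis of $\mathfrak{h}_x$. Applying Gram--Schmidt to the ordered tuple $\bigl(\pi_x(e_1), \ldots, \pi_x(e_{n-k})\bigr)$ produces an orthonormal basis $u_{1,x}, \ldots, u_{n-k,x}$ of $\mathfrak{h}_x$ depending continuously on $x \in U_1 \cup U_2$. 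I then define $\rho_x$ by sending the standard basis of $\mathbb{R}^{n-k}$ to $u_{1,x}, \ldots, u_{n-k,x}$; this is by construction a linear isometry onto $\mathfrak{h}_x$ that varies continuously in $x$.

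The required invariance will then be automatic: if $x, y \in U_1 \cup U_2$ satisfy $\mathfrak{g}_x = \mathfrak{g}_y$, then $\mathfrak{h}_x = \mathfrak{h}_y$, so $\pi_x = \pi_y$, hence $\pi_x(e_j) = \pi_y(e_j)$ for every $j$, and Gram--Schmidt returns the identical orthonormal basis, i.e.\ $\rho_x = \rho_y$. The special case $y = T^u x$, for which $\mathfrak{h}_{T^u x} = \mathfrak{h}_x$ always holds, yields the stated $T$-invariance as a consequence. I do not foresee any serious obstacle here; compared with Lemma \ref{lemma: local trivialization}, the only novelty is using one reference basis inside the common subspace $\mathfrak{h}_{p_1} = \mathfrak{h}_{p_2}$ rather than two independent choices, and the only smallness condition on $U_1, U_2$ is that $\pi_x(e_1), \ldots, \pi_x(e_{n-k})$ stay linearly independent there, which is immediate from continuity of the orthogonal projection onto $\mathfrak{h}_x$.
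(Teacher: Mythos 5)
Your argument coincides with the paper's own proof: fix one orthonormal basis of the common subspace $\mathfrak{h}_{p_1}=\mathfrak{h}_{p_2}$, project it via the continuous orthogonal projections $\pi_x$ (which depend only on $\mathfrak{h}_x$), shrink $U_1, U_2$ so the projected vectors stay linearly independent, and apply Gram--Schmidt; the resulting frame depends only on $\mathfrak{h}_x$, giving the required invariance. Correct, and essentially identical to the paper.
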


\begin{proof}
The proof is the same as in Lemma \ref{lemma: local trivialization}.
Let $\pi_x\colon \mathbb{R}^n\to \mathfrak{h}_x$ be the orthogonal projection.
Then $\pi_x$ is continuous in $x\in X_k\setminus X_{k+1}$ and 
$\pi_x = \pi_y$ for any $x, y\in X_k\setminus X_{k+1}$ with $\mathfrak{g}_x = \mathfrak{g}_y$.
We choose an orthonormal basis $e_1, e_2, \dots, e_{n-k}$ of $\mathfrak{h}_{p_1} = \mathfrak{h}_{p_2}$.
For a sufficiently small neighborhood $U_i$ of $p_i$ in $X_k\setminus X_{k+1}$ $(i=1,2)$, the vectors 
$\pi_x(e_1), \pi_x(e_2), \dots, \pi_x(e_{n-k})$ form a (not necessarily orthogonal) basis of $\mathfrak{h}_x$ for $x\in U_1\cup U_2$. 
We apply the Gram--Schmidt algorithm to them and obtain an orthonormal basis $u_{1, x}, \dots, u_{n-k, x}$ of $\mathfrak{h}_x$.
These vectors are continuous in $x\in U$, and $(u_{1,x}, \dots, u_{n-k,x}) = (u_{1,y}, \dots, u_{n-k, y})$ for  
$x, y\in U_1\cup U_2$ with $\mathfrak{g}_x = \mathfrak{g}_y$.
Therefore they provide a linear isometry $\rho_x\colon \mathbb{R}^{n-k}\to \mathfrak{h}_x$ that satisfies the requirement.
\end{proof}

For closed subsets $B_1, B_2$ of $X_k$ with $B_1\cap B_2 = B_1 \cap X_{k+1} = B_2 \cap X_{k+1} = \emptyset$,
we define $\mathcal{C}_k(B_1, B_2)$ as the set of $f\in \mathcal{C}$ that satisfies 
$f(x)\neq f(y)$ for all $(x, y)\in (B_1 \times B_2) \cap \tilde{X}_k$.
Since $(B_1 \times B_2) \cap \tilde{X}_k$ is a closed subset of $X_k\times X_k$, the set 
$\mathcal{C}_k(B_1, B_2)$ is open in $\mathcal{C}$.

\begin{proposition}[$=$ Proposition \ref{proposition: C_k(B, B^prime) is open and dense}]
Let $0\leq k \leq n-1$ and $(p_1, p_2)\in \tilde{X}_k$.
There exist closed neighborhoods $B_i$ of $p_i$ in $X_k$ for $i=1,2$ such that
$B_1\cap B_2 = B_1 \cap X_{k+1} = B_2 \cap X_{k+1} = \emptyset$ and that 
$\mathcal{C}_k(B_1, B_2)$ is open and dense in $\mathcal{C}$.
\end{proposition}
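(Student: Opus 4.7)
The proof closely follows that of Proposition~\ref{proposition: C_k(A) is open and dense}, but uses Proposition~\ref{prop: local perturbation map} (the parameter-encoded bump which distinguishes points of a compact metric space) in place of Proposition~\ref{prop: local perturbation map, no translation invariance}. By Lemma~\ref{lemma: local trivialization revisited}, first pick disjoint open neighborhoods $U_1, U_2$ of $p_1, p_2$ in $X_k\setminus X_{k+1}$ carrying a compatible family of linear isometries $\rho_x\colon\mathbb{R}^{n-k}\to\mathfrak{h}_x$ with $\rho_x=\rho_y$ whenever $\mathfrak{g}_x=\mathfrak{g}_y$. Apply Proposition~\ref{prop: local section} in each $U_i$ to get a common radius $\delta>0$ and closed local sections $E_i\subset U_i$ with $p_i\in E_i$; then choose small closed neighborhoods $A_0^{(i)}\subset E_i$ of $p_i$ so that the flow boxes $B_i:=\{T^t z : z\in A_0^{(i)},\, t\in B_\delta(\mathfrak{h}_z)\}$ are disjoint closed neighborhoods of $p_i$ in $X_k$ (disjoint from $X_{k+1}$), and so small that for $x=T^{s_1}z_1\in B_1$ the only element of $H_1(x)\cup H_2(x)$ within $B_{10\delta}(-s_1)\subset\mathfrak{h}_x$ is $-s_1$ itself (and symmetrically for $y\in B_2$). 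Openness of $\mathcal{C}_k(B_1,B_2)$ is routine, since $(B_1\times B_2)\cap\tilde{X}_k$ is compact in $X\times X$ by continuity of $x\mapsto\mathfrak{g}_x$.

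For density, fix $f\in\mathcal{C}$ and $\varepsilon>0$. By Lemma~\ref{lemma: C is not empty}, form $f_1:=(1-\varepsilon)f+\varepsilon f_0\in\mathcal{C}$ with $f_1(x)\in\lip_{1-\varepsilon/2}(\mathbb{R}^n,I)$. Let $K:=E_1\cup E_2$ and $\eta:=d(B_1,B_2)>0$, and pick $\varepsilon''<\min(\varepsilon,\eta)/3$. Apply Proposition~\ref{prop: local perturbation map} with $r=\delta/2$, $c=1-\varepsilon/2$, $c'=1-\varepsilon/4$, and tolerance $\varepsilon''$, to the family $z\mapsto f_1(z)\circ\rho_z$ on $K$, producing $\tilde g\colon K\to\lip_{c'}(B_{\delta/2}(\mathbb{R}^{n-k}),I)$ that agrees with the input on $\partial B_{\delta/2}$ and enjoys the shift-tolerant injectivity clause~(3). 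Transport back via $f_2(z)(t):=\tilde g(z)(\rho_z^{-1}(t))$ on $B_{\delta/2}(\mathfrak{h}_z)$ and glue $f_2$ into $f_1$ exactly as in the preceding proof: with cutoffs $\chi_i\colon E_i\to[0,1]$ equal to $1$ on $A_0^{(i)}$, for $s\in H_i(x):=\{s\in\mathfrak{h}_x:T^sx\in E_i\}$, $t\in s+B_{\delta/2}(\mathfrak{h}_x)$, $u\in\mathfrak{g}_x$, set
\[ g(x)(t+u) := \bigl(1-\chi_i(T^s x)\bigr)f_1(x)(t) + \chi_i(T^s x)\,f_2(T^s x)(t-s), \]
and $g(x):=f_1(x)$ elsewhere. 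The resulting $g\colon X_k\to\lip_{c'}(\mathbb{R}^n,I)$ is continuous and $\mathbb{R}^n$-equivariant, coincides with $\iota$ on $X_n$, and is $\varepsilon''$-close to $f_1$.

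The crucial step is to verify $g(x)\neq g(y)$ for all $(x,y)\in(B_1\times B_2)\cap\tilde{X}_k$. Write $x=T^{s_1}z_1$, $y=T^{s_2}z_2$ with $z_i\in A_0^{(i)}$ and $|s_i|\leq\delta$; note $\mathfrak{h}_x=\mathfrak{h}_y$ and $\rho_{z_1}=\rho_{z_2}=:\rho$. Assume $g(x)=g(y)$ and split on $|s_1-s_2|$. If $|s_1-s_2|\leq\delta/4$, setting $u:=\rho^{-1}(s_2-s_1)\in B_{\delta/4}$ and reading off $g(x)=g(y)$ on $-s_1+\rho(B_{\delta/4})$ gives $\tilde g(z_1)(v)=\tilde g(z_2)(v+u)$ for all $v\in B_{\delta/4}$; clause~(3) of Proposition~\ref{prop: local perturbation map} then forces $d(z_1,z_2)<\varepsilon''<\eta\leq d(B_1,B_2)$, contradicting $z_1\in B_1$, $z_2\in B_2$. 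If $|s_1-s_2|>\delta/4$, compare local Lipschitz constants at $-s_1$: by Lemma~\ref{lemma: multi bump function}(3), $g(x)$ realizes the peak constant $c'=1-\varepsilon/4$ at $-s_1$ at the internal bump scale, while the "small $B_i$" choice rules out any interfering bump site and forces $g(y)$ near $-s_1$ to agree either with $f_1(y)$ (local Lipschitz $\leq 1-\varepsilon/2<c'$) or, when $-s_1\in -s_2+B_{\delta/2}(\mathfrak{h}_y)$, with $f_2(z_2)$ shifted, evaluated at the non-origin point $\rho^{-1}(s_2-s_1)$ of norm exceeding $\delta/4$ (local Lipschitz $\leq c_3<c'$ by Lemma~\ref{lemma: multi bump function}(4) together with the boundary behavior $\tilde g(z_2)=f_1(z_2)\circ\rho$ on $\partial B_{\delta/2}$). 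Either way $g(x)=g(y)$ is impossible. Finally, Proposition~\ref{prop: extension and filter} extends $g$ to $h\in\mathcal{C}$ with $D(f,h)$ small, preserving the compactly achieved positive separation $\min\mathbf{d}(g(x),g(y))$ over $(B_1\times B_2)\cap\tilde{X}_k$, and yielding $h\in\mathcal{C}_k(B_1,B_2)$ arbitrarily close to $f$.

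The main obstacle I anticipate is the scale reconciliation in the distinguishing step: clause~(3) of Proposition~\ref{prop: local perturbation map} only detects matches up to shifts of size $\leq r/2=\delta/4$, whereas the possible $|s_1-s_2|$ ranges up to $2\delta$, so the middle and far regimes must be eliminated via the finer peak Lipschitz information from Lemma~\ref{lemma: multi bump function}. Simultaneously choosing the internal Lemma~\ref{lemma: multi bump function} bump scale, the local section radius $\delta$, the flow box diameters, and the separation $\eta=d(B_1,B_2)$ so that the three regimes cover all cases cleanly—while respecting the injectivity of the local section map—is the principal technical burden beyond what was already done for Proposition~\ref{proposition: C_k(A) is open and dense}.
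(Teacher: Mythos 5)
Your general architecture matches the paper's: Lemma~\ref{lemma: local trivialization revisited} for compatible isometries, Proposition~\ref{prop: local section} for local sections $E_1, E_2$, the convex-combination step $f_1 = (1-\varepsilon)f+\varepsilon f_0$, Proposition~\ref{prop: local perturbation map} on $K=E_1\cup E_2$, the equivariant gluing with the cutoff $\chi$, and Proposition~\ref{prop: extension and filter} to pass from $X_k$ to all of $X$. The near-case analysis (when $|s_1-s_2|\leq\delta/4$) is also correct. However, there is a genuine gap in your far-case analysis, and it stems from a poor choice that the paper makes differently.

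You build the flow boxes $B_i$ with radius $\delta$, so $|s_1|,|s_2|\leq\delta$ and the relative shift $|s_1-s_2|$ can be as large as $2\delta$, while Proposition~\ref{prop: local perturbation map}(3) (applied with $r=\delta/2$) only detects matches up to shifts of size $r/2=\delta/4$. This forces you into a secondary argument comparing local Lipschitz constants at $-s_1$ via Lemma~\ref{lemma: multi bump function}(3)--(5), and this step does not close. First, your auxiliary smallness hypothesis---that for $x=T^{s_1}z_1\in B_1$ the only element of $H_1(x)\cup H_2(x)$ inside $B_{10\delta}(-s_1)$ is $-s_1$---is not something you can achieve by shrinking $A_0^{(i)}$ or $B_i$: the return set $H(y)$ depends on the \emph{fixed} closed sets $E_1, E_2$, and the separation between distinct returns that Proposition~\ref{prop: local section} guarantees is only $>\delta$, not $>10\delta$; re-running that proposition with a smaller $\delta$ rescales both quantities and leaves the ratio unchanged. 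So an interfering bump site for $g(y)$ near $-s_1$ cannot be ruled out, and $g(y)$ could itself achieve local Lipschitz constant $c_4$ there, destroying the intended contradiction. Second, even absent interference, when $\delta/4<|s_1-s_2|\leq\delta/2$ the relevant point $\rho^{-1}(s_2-s_1)$ can lie closer than $\delta_{\mathrm{bump}}/2$ to $\partial B_{\delta/2}$, where Lemma~\ref{lemma: multi bump function}(4)--(5) say nothing about $L(\cdot,t,\delta_{\mathrm{bump}}/2)$ (the ball leaves $B_r$), so the inequality $L\leq c_3<c_4$ you want is not available from the lemma as stated.

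The paper sidesteps all of this by choosing the flow boxes $B_i=\{T^t x: x\in A_i,\ t\in B_{\delta/8}(\mathfrak{h}_x)\}$ with radius $\delta/8$. Then $|s_i|\leq\delta/8$, so $|s_1-s_2|\leq\delta/4=r/2$ \emph{always}, and the entire argument is the near case: Proposition~\ref{prop: local perturbation map}(3), applied with distance threshold $d(E_1,E_2)$, immediately yields $d(T^{s_1}x_1,T^{s_2}x_2)<d(E_1,E_2)$, contradicting $T^{s_i}x_i\in A_i\subset E_i$. No Lipschitz-constant comparison, no interference analysis, no boundary delicacy. The fix to your proof is simply to shrink the flow-box radius from $\delta$ to $\delta/8$; with that change your near-case argument is exactly what is needed and the far regime does not occur.
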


\begin{proof}
By Lemma \ref{lemma: local trivialization revisited}, there is an open neighborhood $U_i$ of $p_i$ in $X_k\setminus X_{k+1}$ $(i=1,2)$ with 
$U_1\cap U_2 = \emptyset$ so that we can choose a linear isometry $\rho_x\colon \mathbb{R}^{n-k}\to \mathfrak{h}_x$ 
for $x\in U_1\cup U_2$ which is continuous in $x\in U_1\cup U_2$ and satisfies 
$\rho_x  = \rho_y$ for any $x, y\in U_1\cup U_2$ with $\mathfrak{g}_x = \mathfrak{g}_y$.

By Proposition \ref{prop: local section} we can take $\delta>0$ and a closed subset $E_i$ of $X_k$ with $p_i\in E_i \subset U_i$ and
$E_i\cap X_{k+1}=\emptyset$ $(i=1,2)$ such that 
\begin{itemize}
   \item the map 
   \begin{equation} \label{eq: flow box in the proof of main proposition 2}
     B_{\delta}(\mathfrak{h}|_{E_i}) \to X_k, \quad (x, t) \mapsto T^t x 
   \end{equation}
   is injective,
   \item the image of the map (\ref{eq: flow box in the proof of main proposition 2}) is contained in $U_i$, 
   \item the image of the map (\ref{eq: flow box in the proof of main proposition 2}) contains an open neighborhood $V_i$ of $p_i$ in $X_k$.
\end{itemize}

For $x\in X_k$ we define $H(x) = \{t\in \mathfrak{h}_x\mid T^t x \in E_1\cup E_2\}$.
We have $|t-u|>\delta$ for any $t, u\in H(x)$ with $t\neq u$.
We also have $H(x) = \emptyset$ for $x\in X_{k+1}$.

We can find a closed subset $A_i$ of $X_k$ such that $p_i\in A_i\subset E_i\cap V_i$ and that a set $B_i$ defined by
\[ B_i = \{T^t x\mid x\in A_i, t \in B_{\frac{\delta}{8}}(\mathfrak{h}_x)\} \]
is a closed neighborhood of $p_i$ in $X_k$.
See Figure \ref{figure: local section2}.

\begin{figure}[h] 
    \centering
    \includegraphics[width=3.0in]{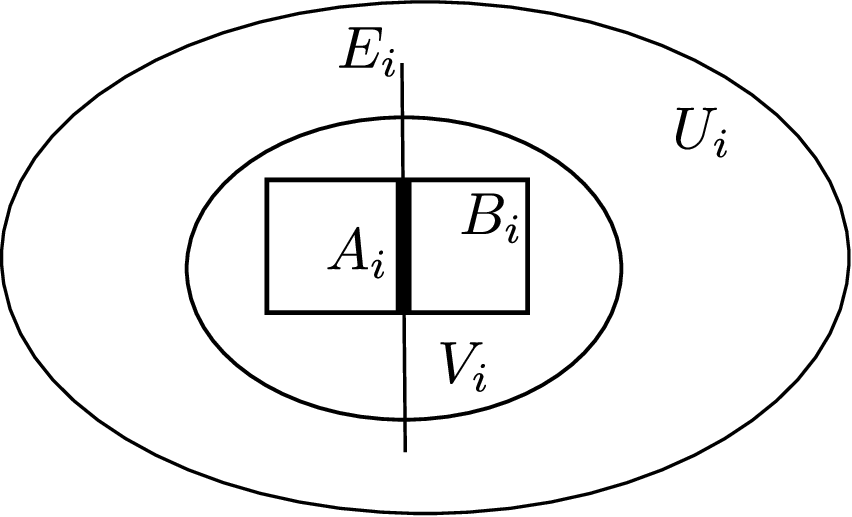}
    \caption{The schematic picture of the local section $E_i$ and the flow box $B_i$. 
    The line segment in the center represents the local section $E_i$, and the rectangle represents $B_i$. 
    The point $p_i$ belongs to the set $A_i$ (the thick line segment inside $E_i$).}
     \label{figure: local section2}
\end{figure}

Notice that $B_i\subset U_i$ and hence $B_1\cap B_2 = B_1\cap X_{k+1} = B_2\cap X_{k+1} = \emptyset$.
We are going to prove that $\mathcal{C}_k(B_1, B_2)$ is dense in $\mathcal{C}$.

We take a continuous function $\chi\colon E_1\cup E_2\to I$ such that 
$\chi(x) = 1$ for all $x\in A_1\cup A_2$ and $\supp(\chi) \subset (E_1\cap V_1) \cup (E_2\cap V_2)$.

Suppose we are given arbitrary $f\in \mathcal{C}$ and $0<\varepsilon<1$.
By Lemma \ref{lemma: C is not empty}, we can take an equivariant continuous map 
$f_0\colon X\to \lip_{\frac{1}{2}}(\mathbb{R}^n, I)$ such that $f_0(x) = \iota(x)$ for all $x\in X_n$.
We define $f_1\colon X\to \lip_{1-\frac{\varepsilon}{2}}(\mathbb{R}^n, I)$ by 
$f_1(x)(t) = (1-\varepsilon) f(x)(t) + \varepsilon f_0(x)(t)$. We have $f_1\in \mathcal{C}$ and 
$|f_1(x)(t)-f(x)(t)| \leq 2\varepsilon$ for all $x\in X$ and $t\in \mathbb{R}^n$. 

We take a metric $d$ on $X$. We denote $d(E_1, E_2)  := \min\{d(x,y)\mid x\in E_1, y\in E_2\}$.
This is positive because $E_1$ and $E_2$ are disjoint closed subsets.
Let $x\in E_1\cup E_2$ we consider the map 
\[ f_1(x)\circ \rho_x\colon B_{\frac{\delta}{2}}(\mathbb{R}^{n-k}) \to I, \quad t\mapsto f_1(x)\left(\rho_x(t)\right). \]
We apply Proposition \ref{prop: local perturbation map} to this map (with $K := E_1\cup E_2$).
Then we get a continuous map 
$g_1\colon E_1\cup E_2\to \lip_{1-\frac{\varepsilon}{4}}\left(B_{\frac{\delta}{2}}(\mathbb{R}^{n-k}), I\right)$
that satisfies the following three conditions.
 \begin{itemize}
    \item $|f_1(x)\left(\rho_x(t)\right) - g_1(x)(t)| < \varepsilon$ for all $x\in E_1\cup E_2$ and $t\in B_{\frac{\delta}{2}}(\mathbb{R}^{n-k})$,
    \item  $g_1(x)(t) = f_1(x)\left(\rho_x(t)\right)$ for all $x\in E_1\cup E_2$ and $t\in \partial B_{\frac{\delta}{2}}(\mathbb{R}^{n-k})$,
    \item  Let $x, y \in E_1\cup E_2$.
              If there is $s \in B_{\frac{\delta}{4}}(\mathbb{R}^{n-k})$ satisfying 
             $g_1(x)(t+s) = g_1(y)(t)$ for all $t \in B_{\frac{\delta}{4}}(\mathbb{R}^{n-k})$ then $d(x, y) < d(E_1, E_2)$
             (in particular, it is impossible to have both $x\in E_1$ and $y\in E_2$).
 \end{itemize}

For $x\in E_1\cup E_2$ we define $f_2(x)(t) = g_1(x)\left(\rho_x^{-1}(t)\right)$ for $t\in B_{\frac{\delta}{2}}(\mathfrak{h}_x)$.
We have $f_2(x) \in \lip_{1-\frac{\varepsilon}{4}}\left(B_{\frac{\delta}{2}}(\mathfrak{h}_x), I\right)$ and it satisfies 
the following three conditions.
   \begin{enumerate}
      \item[(i)] $|f_1(x)(t)- f_2(x)(t)| < \varepsilon$ for $x\in E_1\cup E_2$ and $t\in B_{\frac{\delta}{2}}(\mathfrak{h}_x)$.
      \item[(ii)] $f_2(x)(t) = f_1(x)(t)$ for $x\in E_1\cup E_2$ and $t\in \partial B_{\frac{\delta}{2}}(\mathfrak{h}_x)$.     
      \item[(iii)] Let $x, y\in E_1\cup E_2$ with $\mathfrak{g}_x = \mathfrak{g}_y$. 
                     If there is $s\in B_{\frac{\delta}{4}}(\mathfrak{h}_x)$
                     satisfying $f_2(x)(t+s) = f_2(y)(t)$ for all $t\in B_{\frac{\delta}{4}}(\mathfrak{h}_x)$ then 
                     $d(x, y) < d(E_1, E_2)$.
   \end{enumerate}

We define an equivariant continuous map $g\colon X_k\to \lip_{1-\frac{\varepsilon}{4}}(\mathbb{R}^n, I)$ as follows.
Let $x\in X_k$.
   \begin{enumerate}
      \item[(a)] For $t\in \mathfrak{h}_x \setminus \bigcup_{s\in H(x)} \left(s+ B_{\frac{\delta}{2}}(\mathfrak{h}_x)\right)$ and $u\in \mathfrak{g}_x$
               we set $g(x)(t+u) = f_1(x)(t)$. In particular we have $g(x) = f_1(x)$ if $H(x) =\emptyset$.
      \item[(b)] Let $s\in H(x)$. For $t\in s + B_{\frac{\delta}{2}}(\mathfrak{h}_x)$ and $u\in \mathfrak{g}_x$ we set
      \begin{align*}
         g(x)(t+u) & = g(x)(t) \\
                      & = \left(1-\chi(T^s x)\right)f_1(x)(t) + \chi(T^s x) f_2(T^s x)(t-s) \\
                      & = f_1(x)(t) + \chi(T^s x)\left(f_2(T^s x)(t-s) - f_1(T^s x)(t-s)\right).
      \end{align*}
       This is compatible with the above (a) by the condition (ii) of $f_2$.
   \end{enumerate}
We have $|g(x)(t) - f_1(x)(t)| < \varepsilon$ for all $x\in X_k$ and $t\in \mathbb{R}^n$.
We also have $g(x) = \iota(x)$ for $x\in X_n$ because $H(x) = \emptyset$ for $x\in X_n$.

\begin{claim}
If $x_1\in B_1$ and $x_2 \in B_2$ satisfy $\mathfrak{g}_{x_1} = \mathfrak{g}_{x_2}$ then $g(x_1)\neq g(x_2)$.
\end{claim}

\begin{proof}
Suppose $g(x_1) = g(x_2)$.
There is $s_i\in B_{\frac{\delta}{8}}(\mathfrak{h}_{x_i})$ with $T^{s_i} x_i \in A_i$ $(i=1,2)$.
Set $s := s_2-s_1\in B_{\frac{\delta}{4}}(\mathfrak{h}_{x_1})$.
Since $\chi = 1$ on $A_1\cup A_2$,
for $t\in B_{\frac{\delta}{2}}(\mathfrak{h}_{x_1})$ we have 
\[  g(T^{s_1}x_1)(t) = f_2(T^{s_1}x_1)(t), \quad g(T^{s_2}x_2)(t) = f_2(T^{s_2}x_2)(t). \] 
By the equivariance, we have $g(T^{s_1}x_1)(t+s) = g(T^{s_2} x_1)(t) = g(T^{s_2} x_2)(t)$. Then 
\[ f_2(T^{s_1}x_1)(t+s) = f_2(T^{s_2} x_2)(t) \quad (t\in B_{\frac{\delta}{4}}(\mathfrak{h}_{x_1})). \]
It follows from the condition (iii) of $f_2$ that we have $d(T^{s_1} x_1, T^{s_2} x_2) < d(E_1, E_2)$.
However this contradicts $T^{s_i}x_i \in A_i \subset E_i$.
\end{proof}

Then we can find a positive number $\varepsilon^\prime$ for which we have 
$\mathbf{d}(g(x), g(y)) > \varepsilon^\prime$ for all $(x, y)\in B_1\times B_2$ with $\mathfrak{g}_x = \mathfrak{g}_y$.
(Recall again that $\mathbf{d}$ is the metric on $Y$ defined by 
 $\mathbf{d}(\varphi, \psi) = \sup_{m \geq 1} \left(2^{-m}\sup_{|t|\leq m} |\varphi(t)-\psi(t)|\right)$.)
 
By Proposition \ref{prop: extension and filter}, we can take an equivariant continuous map 
$h\colon X\to \lip_1(\mathbb{R}^n, I)$ satisfying the following conditions.
  \begin{itemize}
     \item $|h(x)(t)-f_1(x)(t)| < \varepsilon$ for all $x\in X$ and $t\in \mathbb{R}^n$.
     \item $|h(x)(t)-g(x)(t)| < \varepsilon^\prime$ for all $x\in X_k$ and $t\in \mathbb{R}^n$. This implies that
              $h(x) \neq  h(y)$ for any $(x, y)\in B_1\times B_2$ with $\mathfrak{g}_x = \mathfrak{g}_y$.  
     \item $h(x) = g(x) = \iota(x)$ for all $x\in X_n$. 
  \end{itemize}
Hence $h\in \mathcal{C}_k(B_1, B_2)$ and we have 
\[  |h(x)(t) - f(x)(t)|  \leq  |h(x)(t)-f_1(x)(t)| + |f_1(x)(t)-f(x)(t)|  < 3\varepsilon \]
for all $x\in X$ and $t\in \mathbb{R}^n$.
Since $f\in \mathcal{C}$ and $\varepsilon \in (0,1)$ are arbitrary, this has shown that 
$\mathcal{C}_k(B_1, B_2)$ is dense in $\mathcal{C}$.
\end{proof}

As we have already explained in \S \ref{subsection: strategy of the proof}, Lemma \ref{lemma: C is not empty} and 
Propositions \ref{proposition: C_k(A) is open and dense} and \ref{proposition: C_k(B, B^prime) is open and dense}
imply our main theorem (Theorem \ref{theorem: main theorem}).
Therefore we have completed the proof of the main theorem.

\end{document}